\newcommand{\character}{\operatorname{char}}
\newcommand{\rank}{\operatorname{rk}}
\newcommand{\redu}[1]{\overline {#1}}
\newcommand{\St}[2]{\operatorname{St}_{#2}({#1})}
\newcommand{\Lk}[2]{\operatorname{Lk}_{#2}({#1})}
\newcommand{\susp}[1]{\Sigma {#1}}
\newcommand{\cone}[1]{C {#1}}
\newcommand{\im}{\operatorname{im}}
\newcommand{\sk}{\operatorname{sk}}
\newcommand{\Aut}{\operatorname{Aut}}
\newcommand{\N}{\mathbb Z_{\geq 1}}
\newcommand{\Z}{\mathbb Z}
\newcommand{\R}{\mathbb R}
\newcommand{\Q}{\mathbb Q}
\newcommand{\C}{\mathbb C}
\newcommand{\F}{\mathbb F}
\renewcommand{\k}{\mathbf k}
\newcommand{\hocolim}{\operatorname{hocolim}}
\newcommand{\Quad}[1]{\operatorname{Quad}(#1 )}
\DeclareSymbolFont{symbols2}{LS1}{stixfrak} {m} {n}
\DeclareMathSymbol{\orthosum}{\mathbin}{symbols2}{"A8}
\renewcommand{\H}{\mathbb H}
\newcommand{\E}{\mathbb E}
\newtheorem{thm}{Theorem}[section]
\newtheorem{lem}[thm]{Lemma}
\newtheorem{prop}[thm]{Proposition}
\newtheorem{cor}[thm]{Corollary}
\newtheorem{claim}{Claim}
\newcommand{\thistheoremname}{}
\newtheorem{genericthm}[thm]{\thistheoremname}
\newenvironment{namedtheorem}[1]
  {\renewcommand{\thistheoremname}{#1}%
   \begin{genericthm}}
   {\end{genericthm}}
\theoremstyle{definition}
\newtheorem{theorem}{Theorem}
\newtheorem{corollary}[theorem]{Corollary}
\newtheorem{exmp}[thm]{Example}
\newtheorem*{defn}{Definition}
\theoremstyle{remark}
\newtheorem*{rmk}{Remark}
\newenvironment{claimproof}{\par\noindent\textit{Proof of claim.}}{\footnotesize{\hfill$\blacksquare$}\newline}
\title[Twisted homology stability of $O_n$ for valuation rings]{Twisted homology stability of $O_n$\\for valuation rings}
\author{Oscar Harr}
\date{\today}
\numberwithin{equation}{section}
\begin{document}
\maketitle
\begin{abstract}
  In this article, we extend an argument of Vogtmann in order
  to show homology stability of the Euclidean orthogonal group
  $O_n(A)$ when $A$ is a
  valuation ring subject to arithmetic conditions
  on either its residue or its quotient field.
  In particular, it is shown that if $A$ is a henselian
  valuation ring, then the groups $O_n(A)$
  exhibit homology stability if the residue field of $A$
  has finite Pythagoras number.
  Our results include those of Vogtmann,
  and hold with various twisted coefficients.
  Using these results, we give analogues for fields $F\neq\R$ of
  some computations that appear in the study of
  scissor congruences.
\end{abstract}

\setcounter{section}{-1}
\section{Introduction}
Let $R$ be a commutative ring.
For each $n\in\N$, consider the quadratic modules
\begin{align*}
  \E_R^n &= (R^n,n\langle 1\rangle) \tag{Euclidean $n$-space}, \\
  \H_R^{2n} &= (R^{2n},n\langle 1,-1\rangle) \tag{hyperbolic $2n$-space}.
\end{align*}
(We leave out subscripts when no ambiguity may arise.)
Here $n\langle 1\rangle$ denotes the Euclidean quadratic form
$\sum_{i=1}^n X_i^2$ and $n\langle 1,-1\rangle$ is the hyperbolic quadratic form
$\sum_{i=1}^{2n} (-1)^{i+1}X_i^2$.
We write
\begin{align*}
  O_n(R) &= O(n\langle 1\rangle )\tag{orthogonal group},\\
  O_{n,n}(R) &= O(n\langle 1,-1\rangle ) \tag{split-orthogonal group}
\end{align*}
for the isometry groups of these modules.\footnote{
  In hermitian $K$-theory,
  the split-orthogonal group is often referred to simply as the orthogonal group,
  and occasionally it is even denoted $O_n(R)$.
  We follow conventions that agree with classical definitions,
  so for instance $O_n(\R )$ is the usual orthogonal group
  (viewed as a discrete group).
}
The homology of split-orthogonal groups has been intensely
studied due to its importance in hermitian $K$-theory (recalled in Section~\ref{subsec:k-thy}),
and it is known to stabilize for a large class of
rings~\cite{vogtmann81,betley87,charney87,mirzaii-kallen2001}.

Less is known about the homology of Euclidean orthogonal groups,
which has only been shown 
to stabilize over fields with
certain arithmetic properties~\cite{vogtmann82};
over the rings of $S$-integers in number fields for certain sets of
places $S$~\cite{collinet11};
and for finite rings in which two is invertible~\cite{wang-kannan22}. 
Some work has been done on improving Vogtmann's stability range for
$O_n(F)$ when $F$ is a field subjected to stricter arithmetic
conditions, e.g. by Cathelineau~\cite{cathelineau2007} for
infinite Pythagorean fields
and Sprehn--Wahl~\cite{sprehn-wahl} for fields whose Stufe is
less than or equal to two.

\medskip

In this article, we extend Vogtmann's methods to valuation rings.
Our proof fits into the stability framework of
Randal-Williams--Wahl~\cite{RWW17},
and thus we also get homology stability with various twisted coefficients.
Before we state our theorem, 
we recall some algebraic notions:
\begin{itemize}
\item The \emph{Pythagoras number} $P(R)$
 of a ring $R$ is the smallest $p\in\N$
so that every sum of squares in $R$ is a sum
of $p$ squares, if any such number $p$ exists;
if no such number exists, put $P(R) = \infty$.
\item A \emph{valuation ring} is an integral domain $A$
with fraction field $K$, so that for each $x\in K$ either
$x\in A$ or $x^{-1}\in A$.
Equivalently, there exists a totally ordered abelian group $\Gamma$
and a valuation $\nu\colon K\to\Gamma\cup\lbrace\infty\rbrace$
so that $A = \lbrace x\in K\mid\nu (x)\geq 0\rbrace$.
\item A local ring $A$ with residue field $\k$ is
  \emph{henselian} if it satisfies Hensel's lemma:
for any polynomial $f\in A\lbrack X\rbrack$,
if $\alpha\in\k$ has $\bar f(\alpha ) = 0$ and $\bar f' (\alpha )\neq 0$,
then there is $a\in A$ with $f(a) = 0$ and $\bar a = \alpha$.
\end{itemize}
Our main result is:
\begin{theorem}
  Let $A$ be a valuation ring with $2\in A^\times$,
  and denote by $\k$ and $K$ the residue field and the quotient
  field of $A$, respectively.
  Assume that either
  \begin{enumerate}[label=(\roman*)]
  \item $A$ is henselian and $P(\k ) < \infty$; or
  \item $P(K)<\infty$.
  \end{enumerate}
  Then the Euclidean orthogonal groups $ O_n(A)$
  satisfy homology stability with constant, abelian, and polynomial
  coefficients.
\end{theorem}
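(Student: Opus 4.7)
The plan is to apply the homological stability framework of Randal-Williams--Wahl to the groupoid of Euclidean quadratic modules over $A$, thereby reducing the theorem to a high-connectivity statement for an auxiliary complex of orthogonal frames, which is then to be verified by comparison with Vogtmann's theorem over an appropriate field.

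First, organize the Euclidean orthogonal groups into a symmetric monoidal groupoid $\mathcal{G}(A)$ whose objects are the Euclidean quadratic modules $\E_A^n$, with isometries as morphisms and orthogonal direct sum as monoidal product; the standard stabilization $O_n(A) \hookrightarrow O_{n+1}(A)$ arises as the monoidal product with $\E_A^1$. The Randal-Williams--Wahl machinery reduces homology stability with constant, abelian, and polynomial coefficients to a single connectivity input: the splitting complex $F_n(A)$ of orthogonal frames in $\E_A^n$---whose $k$-simplices are $(k+1)$-tuples of pairwise orthogonal unit vectors---must have connectivity growing linearly in $n$, with offset controlled by the Pythagoras number appearing in the hypothesis.

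In case (i), with $A$ henselian and $P(\k) < \infty$, the key tool is Hensel's lemma. I would show that the reduction $F_n(A) \to F_n(\k)$ is a weak equivalence: using $2 \in A^\times$, any orthogonal frame in $\E_\k^n$ lifts to one in $\E_A^n$, since lifting a unit vector entails solving a quadratic with non-vanishing derivative, and subsequent orthogonality conditions can be imposed by iterated Hensel-type corrections; moreover the space of such lifts is contractible. Vogtmann's connectivity theorem applied to $\k$ then provides the required bound for $F_n(A)$.

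In case (ii), where $P(K) < \infty$, the natural comparison is between $F_n(A)$ and $F_n(K)$, since the former embeds into the latter and Vogtmann's theorem gives high connectivity of the latter. The strategy is to exploit the non-archimedean valuation directly: any finite configuration of unit vectors in $K^n$ admits a canonical rescaling, and using the bound $P(K) < \infty$ one should be able to construct an element of $O_n(A)$ that brings the configuration inside $A^n$, propagating orthogonality at each step. Alternatively, one may mimic Vogtmann's inductive argument directly within $F_n(A)$, relying on the fact that a unit vector in $A^n$ generates a summand of $\E_A^n$ whose orthogonal complement is again free and Euclidean.

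The main obstacle is verifying the connectivity input in case (ii). In case (i) the henselian hypothesis reduces the problem cleanly to the residue field; in case (ii), by contrast, one lacks any lifting principle and must work directly with the interplay of the valuation and the orthogonal structure. The delicate point is to ensure that the corrections needed to compare $F_n(A)$ with $F_n(K)$ can be realized within $O_n(A)$ while preserving orthogonality, and that the Pythagoras bound on $K$ provides enough orthogonal room in $A$; making this precise is the technical crux.
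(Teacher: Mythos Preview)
Your overall architecture is right and matches the paper: feed the Euclidean quadratic modules into the Randal-Williams--Wahl machine, verify homogeneity and local standardness, identify the destabilization space with the frame complex, and reduce everything to a connectivity estimate. Where you diverge from the paper is in how you propose to obtain that estimate.

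The paper does \emph{not} compare $F_n(A)$ with a field complex in either case. Instead it re-runs Vogtmann's Morse-theoretic induction directly over $A$, on posets of the form $X_l(U^\perp\cap V^\perp)$ inside $\E_A^n$. The field enters at exactly one point in the induction: the step ``find a unit vector in a given non-singular submodule of $\E_A^n$.'' In case~(i) this is done by reducing to $\k$, finding the vector there via Lemma~\ref{lem:pythagoras-unit-vector}, and lifting via Hensel (Lemma~\ref{lem:hensel-isotropy}); in case~(ii) one passes to $K$, finds the vector there, and observes that over a valuation ring any $K$-unit vector rescales to a primitive $A$-unit vector (Lemma~\ref{lem:DVR-isotropy}). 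This gives a single uniform argument for both cases.

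Your plan for case~(i), showing that $F_n(A)\to F_n(\k)$ is a weak equivalence, is a genuinely different route. It may well be true, but ``the space of such lifts is contractible'' is not a proof for simplicial complexes, and you would need a real argument (e.g.\ a link-by-link induction, or a Quillen-type fiber criterion). Your first plan for case~(ii), moving a $K$-frame into $A^n$ by an element of $O_n(A)$, does not work as stated: $O_n(A)$ does not act on $K^n$ with enough transitivity to arrange this. Your second plan for case~(ii), running Vogtmann's argument inside $A$, is exactly the paper's approach---but you have not identified the algebraic ingredients that make it go through over a ring.

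Those ingredients are the real content you are missing. Over a field, $U^\perp\cap V^\perp$ is automatically a subspace and one can diagonalize. Over $A$ one must first know that $U^\perp\cap V^\perp$ is a direct summand of $\E_A^n$; the paper gets this from the semi-hereditary property of valuation rings (Lemma~\ref{lem:split-off-kernel}). One must then produce a large \emph{non-singular} submodule inside it, which requires splitting off the radical (Lemma~\ref{lem:splitting-off-radical}) and a dimension count (Lemma~\ref{lem:large-nonsing-subspace}). Only then can one invoke the field to locate a unit vector. None of this is visible in your sketch, and it is precisely where the valuation-ring hypothesis does its work.
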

More detailed statements of this result are given in 
Theorems~\ref{thm:theorema},~\ref{thm:theorema-abelian},
and~\ref{thm:poly-stab} below.
In particular, we answer a question of Cathelineau~\cite{cathelineau2007},
which asks for the precise stability ranges that
can be achieved using Vogtmann's methods.
Theorem A also partially answers another question posed by Cathelineau
in the same article,
which asks for a good class of local rings, with infinite 
Pythagorean\footnote{
  Recall that a field $F$ is said to be \emph{Pythagorean}
  if $P(F)=1$.
}
residue fields, whose Euclidean orthogonal groups have homology
stability.

In particular, when $F$ is a field with $P(F)<\infty$, 
the fact that the groups $O_n(F)$ satisfy homology stability
with twisted coefficients is not found in the literature,
although it follows immediately from the observation that
the arguments of \cite{vogtmann82} fit into the stability
framework of \cite{RWW17}.
\subsection*{Stiefel complexes}
Concretely, we get our stability results by showing
that the \emph{Stiefel complexes} $X(\E_A^n)$
of a valuation ring $A$
are highly-connected, subject to arithmetic conditions on the
residue or quotient field of $A$.
Here $X(\E^n_A)$ is the simplicial complex whose simplices
are \emph{orthonormal frames} in $\E^n_A$ (see Section~\ref{sec:connect}).

In proving our connectivity estimates,
we use various basic results from the theory of quadratic forms
over semi-local rings.\footnote{
  Recall that a commutative ring $R$ is \emph{semi-local} if
  it only has finitely many maximal ideals.
}
As this material is non-standard, we give a brief overview
of the necessary results in Section~\ref{sec:homog-quad-R}.
The general trend of the theory, however, is that many
fundamental results for quadratic forms over fields
admit extensions to the semi-local setting.

\subsection*{Computations}
By \cite{djament-vespa}, the stable homology $\varinjlim_n H_*(G_n;F_n)$
of a finite-degree coefficient system $F$ along a sequence of groups
$G_1\to G_2\to\dots$ can often be computed in terms of two factors:
(1) the functor homology $H_*(\coprod_n BG_n;F)$ and 
(2) the homology of $G_\infty$ with constant coefficients.
In \cite{djament2012}, Djament uses this to give various general
results about the stable homology of split-orthogonal and Euclidean
orthogonal groups in a finite-degree coefficient system.
In the Euclidean case, these results can be used to compute the
stable homology of the types of coefficient systems that appear,
for $R=\R$,
in the work of Dupont and Sah on scissors congruences.

Combining Djament's stable calculations with our stability results,
it is possible to give various generalizations of computations
that, in the real case, appear in scissors congruences.
Note, however, that plugging $\R$ into our results does not give
sharp enough ranges to carry out the arguments in e.g. \cite{dupont-sah}.

We follow Vogtmann in defining the
ad hoc invariant $m_K$ for $K$ a field in order to get sharper ranges
than those which follow from a finite Pythagoras number;
by definition, $m_K$ is the smallest number $m$
(possibly $m=\infty$) such that any \emph{positive-definite}
quadratic module $(V,q)$ over $K$ contains a unit vector
if $\dim_KV\geq m$, where positive-definite means that
$q(v)$ is a sum of squares for each $v\in V$.
Pythagorean fields $K$ have $m_K = 1$,
finite fields have $m_{\F_q} = 2$, and local or global fields $K$
all have $m_K\leq 4$.
\begin{corollary}
  Let $K$ be a field of characteristic zero
  with $P(K)<\infty$,
  and let $d\in\N$.
  Then
  $$
  H_i(O_n(K);\Lambda_\Q^d(K^n))
  = 0
  $$
  for
  \begin{enumerate}[label=(\alph*)]
  \item
    $
    3i\leq n-m_K-3d-4 
    $
    if $m_K<\infty$;
  \item
    $
    2i\leq n-m_K-2d-2
    $
    if $K$ is formally real and $m_K<\infty$;
  \item 
    $
    (2P(K ) + 1)i\leq n-3-2P(K ) - (d+1)(2P(K ) + 1)
    $
    if $P(K)<\infty$; and
  \item $(P(K ) + 1)i\leq n-2-P(K ) - (d + 1)(P(K ) + 1)$
    if $K$ is formally real and $P(K)<\infty$.
  \end{enumerate}
\end{corollary}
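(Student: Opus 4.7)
The plan is to combine Theorem~\ref{thm:poly-stab} (homology stability with polynomial coefficients) with the stable homology computations of Djament~\cite{djament2012}, so that the vanishing of the stable limit forces the unstable groups to vanish throughout the stability range.

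First I would recognize $V\mapsto\Lambda_\Q^d(V)$ as a polynomial coefficient system of degree $d$ in the sense of Randal-Williams--Wahl~\cite{RWW17}. This is routine: the identity functor $V\mapsto V$ is linear of degree $1$ as a functor from $K$-vector spaces to $\Q$-modules, and its $d$-fold exterior power over $\Q$ is polynomial of degree $d$, with the orthogonal-group action inherited functorially.

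Next I would unpack Theorem~\ref{thm:poly-stab} for $\Lambda_\Q^d$. This produces stability isomorphisms
\[
H_i(O_n(K);\Lambda_\Q^d(K^n)) \xrightarrow{\ \cong\ } H_i(O_{n+1}(K);\Lambda_\Q^d(K^{n+1}))
\]
in a range of the form $si\leq n-(d+1)s-c$, where the slope $s$ and the additive constant $c$ come from the connectivity estimate for the Stiefel complex $X(\E_K^n)$ proved in Section~\ref{sec:connect}. The cases (a)--(d) correspond to the four available connectivity estimates: (a) the general $m_K$-bound with slope $s=3$; (b) its formally-real improvement with $s=2$; (c) the $P(K)$-bound with $s=2P(K)+1$; and (d) its formally-real improvement with $s=P(K)+1$. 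The $(d+1)s$ shift in the intercept is the standard polynomial-degree penalty from~\cite{RWW17}.

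Finally I would invoke Djament~\cite{djament2012}, who computes the stable homology $\varinjlim_n H_i(O_n(K);F(K^n))$ for finite-degree coefficient systems $F$ on Euclidean orthogonal groups; for the reduced functor $F=\Lambda_\Q^d$ with $d\geq 1$ his calculation gives $0$. In the range of (a)--(d) the stability isomorphisms identify the unstable groups with this vanishing limit, which yields the claim. The main obstacle is a careful comparison of constants: the intercept of each inequality must be matched against the additive constants from the connectivity estimates of Section~\ref{sec:connect} together with the $(d+1)s$ polynomial shift so that each of the four ranges (a)--(d) is achieved on the nose. This is elementary but unforgiving; no new ideas beyond the combination of stability with stable vanishing are required.
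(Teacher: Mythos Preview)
Your proposal is correct and follows exactly the paper's approach: the paper's proof of this corollary is the one-line observation that $\{\Lambda_\Q^d(K^n)\}_n$ is a degree~$d$ coefficient system, followed by applying Theorem~\ref{thm:poly-stab} together with \cite[Thm~4]{djament2012}. Your identification of the constants is right as well---the four ranges (a)--(d) are precisely the isomorphism ranges (v)--(viii) of Theorem~\ref{thm:poly-stab} with $r=d$, rewritten in the form $si\leq n-\mathrm{const}$.
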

To see this, note that $\lbrace\wedge^d_\Q (K^n)\rbrace_n$ is a degree $d$ coefficient
system, and apply Theorem~\ref{thm:poly-stab} and \cite[Thm 4]{djament2012}.

For $K$ a field, we let $\mathfrak o_n(K)$ denote the adjoint representation
of $O_n(K)$; in other words, the underlying $K$-vector space of $\mathfrak o_n(K)$
is the space of skew-symmetric $(n\times n)$-matrices,
and $A\in O_n(K)$ acts on $\mathfrak o_n(K)$ by sending
a skew-symmetric matrix $B\in\mathfrak o_n(K)$
to the skew-symmetric matrix $ABA^{-1}$.
Then we have:
\begin{corollary}
  Let $K$ be a field of characteristic zero
  with $P(K)<\infty$.
  Then
  $$
  H_i(O_n(K);\mathfrak o_n(K))
  \cong
  \bigoplus_{s+2t+1=i}H_s(O_\infty (K);\Q )\otimes \Omega_{K/\Q }^{2t+1},
  $$
  where $\Omega_{K/\Q}^*$ is the
  graded $K$-algebra of K\"ahler differentials
  over $\Q$,
  for
    \begin{enumerate}[label=(\alph*)]
  \item
    $
    i\leq \frac{n-m_K-10}{3}
    $
    if $m_K<\infty$;
  \item
    $
    i\leq \frac{n-m_K-6}{2}
    $
    if $K$ is formally real and $m_K<\infty$;
  \item 
    $
    i\leq \frac{n-3-2P(K )}{2P(K ) + 1} - 3
    $
    if $P(K)<\infty$; and
  \item $i\leq \frac{n-2-P(K )}{P(K ) + 1} - 3$
    if $K$ is formally real and $P(K)<\infty$.
  \end{enumerate}
\end{corollary}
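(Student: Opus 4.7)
The plan is to recognise $\{\mathfrak{o}_n(K)\}_n$ as a polynomial coefficient system of degree two on the Randal-Williams--Wahl category associated to Euclidean spaces, apply Theorem~\ref{thm:poly-stab} to reduce to the stable homology of $O_\infty(K)$, and then invoke Djament's stable computation --- exactly the two-step strategy already used to derive the preceding corollary.

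The first step is to identify $\mathfrak{o}_n(K)$ with $\Lambda^2_K(K^n)$ as an $O_n(K)$-module. Since $g^{-1} = g^T$ for $g\in O_n(K)$, the adjoint action $B\mapsto gBg^{-1}$ on skew-symmetric matrices agrees with the action on $\Lambda^2_K(K^n)$ induced by the standard representation via $v\wedge w\mapsto vw^T - wv^T$. Exactly as for $\Lambda^d_\Q(K^n)$ in the previous corollary, this assignment extends to a polynomial coefficient system of degree two. Theorem~\ref{thm:poly-stab}, applied with $d=2$, therefore yields isomorphisms
\[
  H_i(O_n(K);\mathfrak{o}_n(K)) \;\cong\; H_i(O_\infty(K);\mathfrak{o}_\infty(K))
\]
throughout the four ranges (a)--(d), which are obtained by substituting $d=2$ into the formulas of the preceding corollary (e.g.\ in case (c), $(2P(K)+1)i\leq n-3-2P(K) - 3(2P(K)+1)$ rearranges to $i\leq \frac{n-3-2P(K)}{2P(K)+1} - 3$).

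To conclude, I would appeal to \cite[Thm.~4]{djament2012}, whose output for the adjoint coefficient system $\{\mathfrak{o}_n(K)\}_n$ is precisely the direct sum $\bigoplus_{s+2t+1=i} H_s(O_\infty(K);\Q)\otimes\Omega^{2t+1}_{K/\Q}$. The only non-formal verification is that the adjoint system meets Djament's hypotheses (finite-degree polynomial functor on the category of Euclidean spaces), which is immediate from the identification with $\Lambda^2_K$ in the first step. I therefore expect no genuine obstacle: the new input is entirely supplied by Theorem~\ref{thm:poly-stab}, while the stable decomposition and the identification of the coefficient system are essentially bookkeeping.
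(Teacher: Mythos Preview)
Your proposal is correct and follows essentially the same route as the paper: recognise $\{\mathfrak o_n(K)\}_n$ as a degree-$2$ coefficient system, apply Theorem~\ref{thm:poly-stab} to obtain the stated ranges, and then quote Djament's stable computation. The only discrepancy is bibliographic: the paper invokes \cite[Cor~6.6]{djament2012} rather than \cite[Thm~4]{djament2012} for the stable value of $H_*(O_\infty;\mathfrak o_\infty)$ (the latter is what the paper uses for the vanishing in the previous corollary about $\Lambda^d_\Q$, which is a different functor from $\mathfrak o_n\cong\Lambda^2_K$), so you should double-check which statement in Djament actually yields the K\"ahler-differential decomposition.
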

To see this, note that $\lbrace\mathfrak o_n(K)\rbrace_n$ is a degree $2$
coefficient system, and apply Theorem~\ref{thm:poly-stab} and
\cite[Cor 6.6]{djament2012}.

As a final corollary:
\begin{corollary}
  Let $p$ be an odd prime
  and let $O_n(\Z_{(p)})$ act on $\Z_{(p)}^n$ in the canonical way.
  Then
  $$
  H_i(O_n(\Z_{(p)});\Z_{(p)}^n)= 0\quad\text{for } i\leq \frac{n-8}{2}.
  $$
\end{corollary}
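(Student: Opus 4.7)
The plan is to combine the stability statement of Theorem~\ref{thm:poly-stab} with a stable vanishing input, in parallel with the two preceding scissors-congruence corollaries. I first verify the hypotheses of Theorem~A: $A=\Z_{(p)}$ is a discrete valuation ring, $2\in A^\times$ since $p$ is odd, and the quotient field $K=\Q$ has $P(\Q)=4<\infty$ by Lagrange's four-square theorem, so condition~(ii) holds. The assignment $n\mapsto\Z_{(p)}^n$ with its canonical $O_n(\Z_{(p)})$-action is a polynomial coefficient system of degree $1$ in the Randal-Williams--Wahl framework; it is the $d=1$ case of the tautological wedge-power functor appearing in the first corollary above, transported from the field to the valuation ring. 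Theorem~\ref{thm:poly-stab} therefore yields stability of $H_i(O_n(\Z_{(p)});\Z_{(p)}^n)$ in the range $i\leq (n-8)/2$; I expect this constant to come from the formally-real, $m_K$-finite branch with $K=\Q$, $d=1$ and $m_\Q=4$, where $m_\Q\leq 4$ follows by Hasse--Minkowski applied to $\langle a_1,\ldots,a_n,-1\rangle$ with $a_i>0$ and $n\geq 4$.

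For the stable value, the quickest argument is ``the centre acts on homology through coefficients''. The matrix $-I\in O_n(\Z_{(p)})$ is central, so conjugation by $-I$ is trivial; the endomorphism of $H_*(O_n(\Z_{(p)});\Z_{(p)}^n)$ induced by the pair (conjugation, $-I$-action on coefficients) is therefore the identity by the standard inner-automorphism fact, while on the other hand $-I$ acts as $-1$ on $\Z_{(p)}^n$ and thus induces multiplication by $-1$ on $H_*$. Hence $2H_i(O_n(\Z_{(p)});\Z_{(p)}^n)=0$, and since $2\in\Z_{(p)}^\times$, the group vanishes; the stable colimit is a fortiori zero. Combining with stability completes the proof.

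The only nontrivial step is the calibration: reading off exactly the bound $(n-8)/2$ from Theorem~\ref{thm:poly-stab} in the case $A=\Z_{(p)}$, $K=\Q$, degree $1$. I note that the central-element argument already yields unconditional vanishing for all $n$ and $i$, so the stated bound is conservative; presenting the corollary through Theorem~\ref{thm:poly-stab} maintains uniformity with the Djament-based corollaries above, where the stable vanishing genuinely requires calculation rather than a central element.
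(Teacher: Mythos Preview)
Your proposal is correct but departs from the paper's route for the vanishing step. The paper invokes \cite[Thm~3]{djament2012} to obtain the stable vanishing $\varinjlim_n H_i(O_n(\Z_{(p)});\Z_{(p)}^n)=0$, and then combines this with Theorem~\ref{thm:poly-stab}(vi) (formally real $K=\Q$, $m_\Q=4$, degree $r=1$) to conclude vanishing in the range $i\leq (n-8)/2$. Your central-element argument via $-I$ is a genuine alternative: it bypasses Djament's functor-homology machinery entirely and, as you note, actually proves the stronger statement that $H_i(O_n(\Z_{(p)});\Z_{(p)}^n)=0$ for \emph{all} $i$ and $n$, since this homology is a $\Z_{(p)}$-module annihilated by $2\in\Z_{(p)}^\times$. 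The paper's route keeps the three corollaries uniform (in the first two no such central-element shortcut is available and one genuinely needs Djament's computations); your route is more elementary and sharper in this particular case. Either way, the constant $(n-8)/2$ is read off from Theorem~\ref{thm:poly-stab}(vi) with $m_\Q=4$ and $r=1$, exactly as you identify.
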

To see this, note that $\lbrace\Z_{(p)}^n\rbrace_n$ is a degree $1$ coefficient
system, and apply Theorem~\ref{thm:poly-stab} and \cite[Thm 3]{djament2012}.

\medskip

\noindent\emph{Organization of the paper.}
In Section~\ref{sec:homog-quad-R}, we give an overview of the results
which we import from the theory of quadratic forms over semi-local rings.
In Section~\ref{sec:connect}, we show high connectivity for the Stiefel
complexes of a valuation ring with nice arithmetic
properties.
In Section~\ref{sec:hom-stab},
we recall the main result of~\cite{RWW17}
and show that its conditions are satisfied for the problem studied here,
thereby proving our main results.
In Appendix~\ref{sec:arithm-invars} we prove various lemmata about
arithmetic properties of local rings which are used in
Section~\ref{sec:connect}.

\medskip

\subsection*{Acknowledgements}
I was partially supported by the Danish
National Research Foundation through the Copenhagen Centre for
Geometry and Topology (DRNF151).
An earlier draft of this paper was submitted as my master's thesis
at the University of Copenhagen in December 2021.
I am deeply grateful to my mentor Nathalie Wahl,
who suggested the topic and supervised my thesis,
and to Kasper Andersen for his thorough corrections and comments.
I am also thankful to Jesper Grodal for unstucking me by suggesting that I limit
the scope of my project to hereditary local rings,
which led to the current semi-hereditary version.

\tableofcontents
\section{Quadratic forms over local rings}
\label{sec:homog-quad-R}
In other sections of this thesis,
we frequently use various facts about quadratic forms over rings.
Some of these hold over general rings,
and some are true over specifically over local or semi-local rings.
Apart from recalling basic notions and establishing notation,
the purpose of this section is to give an overview of the
results that we use above, e.g. when proving connectivity of
Stiefel complexes in Section~\ref{sec:connect}.

The theory of quadratic forms over semi-local rings was pioneered by
Roy, Kneser, Knebusch and others in the 60's and 70's.
The general trend of this development was that many of the
facts that Witt proved for quadratic forms over fields can be
generalized to the semi-local setting.
A good textbook reference for this theory is Baeza's book~\cite{baeza78}.
For the Witt--Pfister theory of quadratic forms over fields,
we refer to Lam's book~\cite{lam-textbook},
but also to~\cite{omeara} which contains a proof of the
general Hasse--Minkowski theorem.
\subsection{Basic facts and definitions}\label{subsec:quad-basic}
Let $R$ be a commutative ring.
\begin{defn}
  A \emph{quadratic module} over $R$ is a pair
  $(V,q)$ where $V$ is a finitely-generated projetive $R$-module
  and $q$ is a \emph{quadratic form} on $V$, meaning a function
  $q\colon V\to R$ that satisfies
  \begin{enumerate}[label=(\roman*)]
  \item $q(a x ) = a^2q(x)$ for each $a\in R$,
    $x\in V$.
  \item The function $B_q\colon V\times V\to R$ defined by
    $B_q(x,y) = q(x + y) - q(x) - q(y)$ is bilinear.
  \end{enumerate}
  We say that $(V,q)$ is \emph{non-singular} if $B_q$ is, i.e.
  if its tensor-hom adjoint
  $$
  \begin{tikzcd}[contains/.style = {draw=none,"\in" description,sloped},row sep = tiny]
    V\arrow[r,"d_q"]
    & V^*\\
    x\arrow[r,mapsto]\arrow[u,contains]
    & B_q(x,{-})\arrow[u,contains]
  \end{tikzcd}
  $$
  is an isomorphism.
  Otherwise, we say that $(V,q)$ is \emph{singular}.
\end{defn}
If $S$ is an $R$-algebra, then any quadratic module $(V,q)$ over $R$
has an associated quadratic module $(V_S,q_S)$ over $S$ with
$V_S = S\otimes_R V$
and
\begin{align*}
  &q_S(s\otimes x) = s^2q(x), \\
  &B_{q_S}(\sum_is_i\otimes x_i,\sum_js_j'\otimes x_j')
                  = \sum_i\sum_js_is_j'B_q(s_i,s_j).
\end{align*}
Given a prime ideal $\mathfrak p$ of $R$, we let
$(V(\mathfrak p),q(\mathfrak p)) = (V_{R/\mathfrak p},q_{R/\mathfrak p})$.
Elementary commutative algebra gives the following useful proposition:
\begin{prop}\label{prop:singular-reduc}
  A quadratic module $(V,q)$ over $R$
  is non-singular if and only if $(V(\mathfrak m),q(\mathfrak m))$
  is non-singular for each maximal ideal $\mathfrak m$ of $R$.
\end{prop}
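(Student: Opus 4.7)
The plan is to reduce the proposition to the standard commutative-algebra fact that a map between finitely generated projective $R$-modules is an isomorphism if and only if it becomes one after tensoring with $R/\mathfrak m$ at every maximal ideal.

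The key preliminary observation is the following base-change compatibility. For any $R$-algebra $S$ and any finitely generated projective $V$, there is a canonical isomorphism $(V^*)\otimes_R S\cong (V\otimes_R S)^*$. Unwinding the formulas for $q_S$ and $B_{q_S}$ given in the paper, one checks that under this identification the adjoint map of the base-changed form agrees with the base change of the adjoint map, i.e. $d_{q_S} = d_q\otimes_R S$. Applying this with $S=R/\mathfrak m$, the map $d_{q(\mathfrak m)}$ is identified with $d_q\otimes_R R/\mathfrak m$.

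With this in hand, the proposition is equivalent to the claim that a morphism $f\colon P\to Q$ of finitely generated projective $R$-modules is an isomorphism iff every $f\otimes R/\mathfrak m$ is. The ``only if'' direction is immediate. For the converse, I would first look at $C=\operatorname{coker}(f)$, which is finitely generated and satisfies $C/\mathfrak mC = 0$ for every maximal $\mathfrak m$; Nakayama applied to $C_\mathfrak m$ over $R_\mathfrak m$ shows $C_\mathfrak m = 0$ at every maximal ideal, hence $C=0$ and $f$ is surjective. Since $Q$ is projective, $f$ splits, so $\ker f$ is a direct summand of $P$ and in particular finitely generated projective. Its fiber at each residue field vanishes (because $f\otimes R/\mathfrak m$ is injective), so by the same Nakayama argument $\ker f = 0$.

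There is no real obstacle here: the work is essentially bookkeeping. The only step that requires care is verifying the compatibility $d_{q_S} = d_q\otimes_R S$ under the natural iso $(V^*)_S\cong (V_S)^*$, which requires $V$ to be finitely generated projective; once this is in place the result is a standard Nakayama-type local-global argument.
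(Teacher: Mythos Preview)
Your proposal is correct and is precisely the ``elementary commutative algebra'' the paper invokes without spelling out; the paper gives no explicit proof of this proposition. The only point worth flagging is that your base-change identity $d_{q_S} = d_q\otimes_R S$ indeed relies on $V$ being finitely generated projective (so that $(V^*)_S\cong (V_S)^*$), which you note, and the rest is a clean Nakayama argument.
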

\begin{exmp}
  Let $a\in R$.
  The function $R\to R$ given by $x\mapsto ax^2$ is a quadratic form
  and will be denoted by $\langle a\rangle$.
  Note that $B_{\langle a\rangle}(x,y) = a(x+y)^2 - ax^2 - ay^2 = 2axy$.
  If $2\in R^\times$, then $\langle a\rangle$ is non-singular
  if and only if $a\in R^\times$.
  Otherwise $\langle a\rangle$ is always singular.
\end{exmp}
A \emph{quadratic submodule} of a quadratic module $(V,q)$
is a direct summand $U\subseteq V$ viewed as a quadratic module
equipped with the restricted form $q\vert_U$.
\begin{exmp}
Let $(V,q)$ be a quadratic module over $R$.
Say that $x,y\in V$ are \emph{orthogonal} (under $q$)
if they are so with respect to $B_q$,
that is if $B_q(x,y) = 0$.
If $S\subseteq V$ is any set, put
$$
S^\bot = \lbrace
x\in V\mid B_q(x,s) = 0\quad\forall\, s\in S
\rbrace,
$$
i.e. the set of elements of $V$ that are orthogonal to all of $S$.
Then $S^\bot$ is referred to as the \emph{orthogonal
  complement} of $S$.
If $(V,q)$ is non-singular 
and $S=U$ is a quadratic submodule of $V$, then we have a short
exact sequence
  $$
  \begin{tikzcd}[contains/.style = {draw=none,"\in" description,sloped},row sep = tiny]
    0\arrow[r]
    &U^\bot\arrow[r]
    &V\arrow[r]
    & U^*\arrow[r]
    &0 \\
    &&x\arrow[r,mapsto]\arrow[u,contains]
    & B_q(x,{-})\vert_U \arrow[u,contains]
  \end{tikzcd},
  $$
  showing that $U^\bot$ is again a quadratic submodule of $V$.
\end{exmp}
\begin{prop}\label{prop:non-sing-summand}
  Let $(V,q)$ be a quadratic module over $R$
  and let $U\subseteq V$ be a finitely-generated projective
  submodule.
  If $(U,q\vert_U)$ is non-singular,
  then $V = U\oplus U^\bot$.
  In particular, $U$ is a quadratic submodule of $V$.
\end{prop}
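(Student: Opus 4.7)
The plan is to use non-singularity of $(U,q\vert_U)$ to construct an explicit $R$-linear retraction $\pi\colon V\to U$ of the inclusion $U\hookrightarrow V$ whose kernel is $U^\bot$; this directly yields the splitting $V = U\oplus U^\bot$, and in particular makes $U$ a direct summand of $V$ and hence a quadratic submodule.

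First I would observe that for each $v\in V$, restriction gives a functional $B_q(v,-)\vert_U\in U^*$. Since $(U,q\vert_U)$ is non-singular, the adjoint $d_{q\vert_U}\colon U\to U^*$ is an isomorphism by definition, so there is a unique element $\pi(v)\in U$ with
\[
B_q(\pi(v),u) \;=\; B_q(v,u) \qquad\text{for all } u\in U.
\]
Bilinearity of $B_q$ and $R$-linearity of $d_{q\vert_U}^{-1}$ make $\pi\colon V\to U$ an $R$-linear map. Next I would check that $\pi$ restricts to the identity on $U$: for $v\in U$ the element $\pi(v)-v\in U$ satisfies $B_q(\pi(v)-v,u)=0$ for every $u\in U$, i.e.\ lies in $\ker d_{q\vert_U}=0$, so $\pi(v)=v$.

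Finally, $\ker\pi = U^\bot$ is tautological from the defining property of $\pi$. Since $\pi$ is a retraction of the inclusion $U\hookrightarrow V$, this yields the internal direct sum decomposition $V = U\oplus\ker\pi = U\oplus U^\bot$ as $R$-modules, which is the first claim. The ``in particular'' is then immediate: as a direct summand of $V$, the submodule $U$ is a quadratic submodule in the sense defined above.

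There is no serious obstacle here: the argument is entirely formal once the isomorphism $d_{q\vert_U}$ is in hand. The only point to keep track of is that the finitely-generated projective hypothesis on $U$ enters only implicitly, namely through the fact that for such $U$ the dual $U^*$ is well-behaved and that ``non-singular'' is equivalent to $d_{q\vert_U}$ being an isomorphism; no further hypotheses on $V$, beyond its being a quadratic module, are needed.
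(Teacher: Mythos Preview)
Your proof is correct and is essentially the same as the paper's: both use non-singularity of $(U,q\vert_U)$ to produce, for each $v\in V$, a unique $\pi(v)\in U$ with $B_q(\pi(v),u)=B_q(v,u)$ for all $u\in U$, and then observe $v-\pi(v)\in U^\bot$. The only difference is packaging---you phrase this as a global $R$-linear retraction $\pi\colon V\to U$ with kernel $U^\bot$, while the paper carries out the decomposition $x=z+(x-z)$ pointwise and separately notes $U\cap U^\bot=0$; the content is identical.
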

\begin{proof}
  The fact that $U\cap U^\bot = 0$ follows from non-singularity of $U$.
  We show that $U + U^\bot = V$.
  Let $x$ be an arbitrary element of $V$.
  Then $B_q(x,{-})\vert_U$ is an element of $U^*$.
  Since $U$ is non-singular,
  we must therefore have $z\in U$ with $B_q(x,y) = B_q(z,y)$ for all $y\in U$.
  Hence $x-z\in U^\bot$ and $x = z + (x-z)\in U + U^\bot$.
\end{proof}
Non-singular quadratic modules over $R$ form a category $\Quad R$
in which a morphism from $(V,q)\to (V',q')$ is a form-preserving
$R$-linear map $f\colon V\to V'$, meaning that $q'(f(v)) = q(v)$
for all $v\in V$.
An isomorphism in $\Quad R$ is called an \emph{isometry},
and isomorphic quadratic modules are said to be \emph{isometric}.
We write
$$
O(q) = \Aut_{\Quad R}((V,q))
$$
for the group of self-isometries of a non-singular quadratic module $(V,q)$.
\begin{exmp}
  Let $(V,q)$ be a quadratic module over $R$
  and suppose that $v\in V$ has $q(v)\in R^\times$.
  The $R$-linear map
  $$
  \tau_v\colon x\mapsto x - \frac{B_q(x,v)}{q(v)} v
  $$
  is an element of $O(q)$.
  Note that $\tau_v^2 = \text{id}$;
  in fact, $\tau_v$ maps $v$ to $-v$
  and restricts to the identity
  on $v^\bot$.
  Geometrically,
  the map $\tau_v$ thus corresponds to reflecting across the hyperplane
  $v^\bot$.
\end{exmp}
A celebrated theorem of Cartan and Dieudonn\'e says that if $F$ is
a field with $\character F\neq 2$ and $(V,q)$ is a non-singular
quadratic space over $F$,
then $O(q)$ is generated by hyperplane reflections.
More generally, Klingenberg~\cite{klingenberg} has shown:
\begin{thm}[Cartan--Dieudonn\'e--Klingenberg]
  Let $A$ be a local ring with $2\in A^\times$.
  For any non-singular quadratic module $(V,q)$ over $A$,
  the isometry group $O(q)$ is generated by hyperplane reflections.
\end{thm}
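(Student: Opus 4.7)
The plan is to prove this by induction on the rank $n$ of $V$, closely following the classical Cartan--Dieudonn\'e argument over a field but replacing ``non-zero'' with ``invertible'' and leveraging the local hypothesis on $A$ at a single critical step. The base case $n=0$ is trivial; assume the result for all non-singular modules of rank $<n$.

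First I would ensure that $(V,q)$ contains a vector $v_0$ with $q(v_0)\in A^\times$. Since $A$ is local with $2\in A^\times$, every non-singular quadratic module over $A$ diagonalises (this is a standard consequence of Proposition~\ref{prop:non-sing-summand}: starting from any $v\in V$, either $q(v)$ is a unit and we split off $\langle q(v)\rangle$, or one can use that $B_q$ is non-singular together with $2\in A^\times$ to find a nearby $v'$ with $q(v')\in A^\times$). Fix such a $v_0$ and invoke Proposition~\ref{prop:non-sing-summand} to write $V=\langle v_0\rangle\oplus v_0^\bot$, with $v_0^\bot$ non-singular of rank $n-1$.

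Given $\sigma\in O(q)$, set $w=\sigma (v_0)$, so $q(w)=q(v_0)\in A^\times$. The \emph{key observation} is the identity
$$
q(v_0+w)+q(v_0-w)=2q(v_0)+2q(w)=4q(v_0)\in A^\times .
$$
Because $A$ is local, at least one of $q(v_0+w)$ and $q(v_0-w)$ must itself be a unit (if both lay in the maximal ideal $\mathfrak m$, so would their sum). A short computation yields $\tau_{v_0-w}(w)=v_0$ in the first case, and $\tau_{v_0-w}$ is a legitimate reflection. In the second case one finds $\tau_{v_0+w}(w)=-v_0$, and then $\tau_{v_0}\tau_{v_0+w}(w)=v_0$; note $\tau_{v_0}$ is defined since $q(v_0)\in A^\times$. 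In either case we have produced a product $\rho$ of at most two reflections with $\rho (w)=v_0$, i.e.\ $\rho\sigma$ fixes $v_0$.

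Because $\rho\sigma$ fixes $v_0$ and preserves $B_q$, it preserves $v_0^\bot$ and restricts to an isometry of $(v_0^\bot ,q\vert_{v_0^\bot })$. By the inductive hypothesis this restriction is a product of hyperplane reflections $\tau_{u_1}\cdots\tau_{u_k}$ with $u_i\in v_0^\bot$ satisfying $q(u_i)\in A^\times$; each extends to a reflection of $V$ fixing $v_0$, and composing with $\rho^{-1}$ writes $\sigma$ itself as a product of hyperplane reflections. The main obstacle, and the only place where the local hypothesis enters in an essential way, is the dichotomy on $q(v_0\pm w)$: over a general commutative ring with $2$ invertible neither value need be a unit, which is precisely why Klingenberg's theorem is restricted to local rings.
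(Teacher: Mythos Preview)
The paper does not actually prove this theorem: it is stated with attribution to Klingenberg~\cite{klingenberg} and then used as a black box, so there is no ``paper's own proof'' to compare against. Your argument is essentially Klingenberg's original proof---the classical Cartan--Dieudonn\'e induction with the single local-ring observation that $q(v_0+w)+q(v_0-w)=4q(v_0)\in A^\times$ forces one summand to be a unit---and it is correct.

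The only blemish is a harmless labeling slip: as you have ordered the dichotomy, ``the first case'' is $q(v_0+w)\in A^\times$, yet you then invoke $\tau_{v_0-w}$, which is only a legitimate reflection when $q(v_0-w)\in A^\times$. The two cases are simply swapped; once relabeled, the computations $\tau_{v_0-w}(w)=v_0$ (when $q(v_0-w)$ is a unit) and $\tau_{v_0}\tau_{v_0+w}(w)=v_0$ (when $q(v_0+w)$ is a unit) are exactly right. The existence of an anisotropic $v_0$ that you gloss over is justified by the Diagonalization Theorem stated earlier in the paper.
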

It follows that $H_1(O_n(A);M ) = 0$ for any $O_n(A)$-module $M$
for which $M\xrightarrow{2\cdot} M$ is an isomorphism.
\subsection{Monoidal structure}\label{subsec:monoid}
The category $\Quad R$ has a symmetric monoidal structure called
\emph{orthogonal sum}, which we will denote by $\orthosum$.
The quadratic module $(V_1,q_1)\orthosum (V_2,q_2)$ has underlying module
$V_1\oplus V_2$ and form $q_1\orthosum q_2\colon V_1\oplus V_2\to R$
given by
$$
(q_1\orthosum q_2)((x_1,x_2)) =  q_1(x_1) + q_2(x_2)
$$
for $x_1\in V_1$ and $x_2\in V_2$.
The unit object of $\Quad R$ is the trivial $R$-module equipped
with the zero form.
Given maps of quadratic modules $f_i\colon (V_i,q_i)\to (V_i',q_i')$
for $i=1,2$,
the map $f_1\orthosum f_2$ is simply $f_1\oplus f_2\colon V_1\oplus V_2
\to V_1'\oplus V_2'$,
which one may check is form-preserving.
The required natural transformations in the symmetric monoidal structure
on $\Quad R$ are simply those associated
with the symmetric monoidal structure $\oplus$ on the category
of finitely generated projective $R$-modules;
with our definition of $q\orthosum q'$,
these maps are all form-preserving.

Recall that for $a\in R$, we denote by $\langle a\rangle$ the
quadratic form on $R$ given by $x\mapsto a x^2$.
If $a_1,\dots ,a_n\in R$, we write
$\langle a_1,\dots ,a_n\rangle = \langle a_1\rangle
\orthosum\dots\orthosum\langle a_n\rangle$,
and if $a = a_1 = \dots = a_n$,
we write $n\langle a\rangle$ for short.
More generally, if $(V,q)$ is any quadratic module over $R$, we write
$$
nq = \underbrace{q\orthosum q\orthosum\dots\orthosum q}_{n \text{ times}}.
$$
\begin{exmp}
  For each $n\geq 1$, we define
  \begin{align*}
    \E_R^n &= (R^n,n\langle 1\rangle) \tag{Euclidean $n$-space}, \\
    \H_R^{2n} &= (R^{2n},n\langle 1,-1\rangle) \tag{hyperbolic $2n$-space}.
  \end{align*}
  Both of these quadratic modules are non-singular;
  more generally,
  if $2\in R^\times$, then the form $\langle a_1,\dots ,a_n\rangle$
  is non-singular if and only if $a_1,\dots ,a_n\in R^\times$.
\end{exmp}
It is well-known that if $R = F$ is a field with $\character F\neq 2$,
then any quadratic space $(V,q)$ over $F$ admits
a \emph{diagonalization},
meaning an isometry $(V,q)\cong (F^n,\langle a_1,\dots ,a_n\rangle )$
for some $a_1,\dots ,a_n\in F$.
This fact extends to semi-local rings:
\begin{namedtheorem}{Diagonalization Theorem}
  Suppose $R$ is a semi-local ring in which $2\in R^\times$.
  If $(R^n,q)$ is a non-singular quadratic module over $R$,
  then there is an isometry
  $(R^n,q)\cong (R^n,\langle a_1,\dots ,a_n\rangle )$
  for some $a_1,\dots ,a_n\in R^\times$.
  Equivalently, $R^n$ admits a basis consisting of vectors that are
  orthogonal under $q$.
\end{namedtheorem}
\begin{proof}
  See~\cite[Prop I.3.4]{baeza78}.
\end{proof}
If $R$ is a local ring, then every projective $R$-module is free,
so the theorem says that every non-singular quadratic module admits
 a diagonalization.

To prove homogeneity, we use Roy's~\cite{roy68} generalization of Witt's cancellation
theorem to quadratic forms over semi-local rings:
\begin{namedtheorem}{Cancellation Theorem}[Roy]
  Let $R$ be a semi-local ring
  and let $(V_1,q_1)$, $(V_2,q_2)$ and $(W,q)$
  be non-singular quadratic modules over $R$.
  If
  $$
  (V_1,q_1)\orthosum (W,q)\cong (V_2,q_2)\orthosum (W,q),
  $$
  then
  $$
  (V_1,q_1)\cong (V_2,q_2).
  $$
\end{namedtheorem}
\begin{proof}
  See~\cite[Cor III.4.3]{baeza78},
  or~\cite[Thm 8.1]{roy68}
  for the original proof.
\end{proof}
\subsection{A representation theorem}
Let $R$ be a commutative ring.
\begin{defn}
  Given a quadratic module $(V,q)$ over $R$ and an element $a\in R$,
  say that $(V,q)$ \emph{represents} $a$ if there is $x\in V$ with
  $$
  q(x) = a.
  $$
  Furthermore, if $x$ can be chosen to lie outside of $\mathfrak m V$ for each
  maximal ideal $\mathfrak m\subset R$,
  then $q$ is said to \emph{primitively represent} $a$.
\end{defn}
\begin{rmk}
  In $K$-theory and homology stability literature,
  it is common to consider \emph{unimodular} elements of $R^n$,
  i.e. elements $x = (x_1,\dots ,x_n)\in R^n$ so that
  $Rx_1 + \dots + Rx_n = R$.
  On the other hand, say that $x\in R^n$ is \emph{primitive}
  if it satisfies the condition appearing in the previous definition;
  that is, $x\not\in\mathfrak mR^n$ for each maximal ideal $\mathfrak m$
  in $R$. For valuation rings, these notions coincide.
  Indeed, if $R$ is any ring, then clearly a unimodular vector
  $x\in R^n$ must also be primitive; for if $x\in\mathfrak mR^n$,
  then $Rx_1 + \dots + Rx_n\subseteq\mathfrak m$.
  Suppose now that $A$ is the valuation ring of a valuation
  $\nu\colon K\to\Gamma\cup\lbrace\infty\rbrace$.
  Recall that any finitely-generated ideal in $A$ is principal;
  indeed, if $x_1,\dots ,x_n\in A$, then picking $x_i$ with
  $\nu (x_i)\leq \nu (x_j)$ for each $j$,
  we have $Ax_1 + \dots + Ax_n = Ax_i$ since
  $x_j = x_jx_i^{-1}x_i$ (we may assume that $x_1,\dots ,x_n$
  are not all zero, and so minimality implies $x_i\neq 0$)
  and $\nu (x_jx_i^{-1}) = \nu (x_j) - \nu (x_i)\geq 0$
  so $x_jx_i^{-1}\in A$.
  Thus if $x = (x_1,\dots ,x_n)\in A^n$ is primitive, then 
  $Ax_1 + \dots + Ax_n = Ax_i$ for some $i$,
  and $x_i\not\in\mathfrak m$ since $x$ is primitive,
  so we conclude that $Ax_1 + \dots + Ax_n = A$ as desired.
  In fact, the proof shows more generally that for any ring $R$
  in which every finitely-generated ideal is principal,
  primitive and unimodular elements coincide. 
\end{rmk}
Classically, one has been interested in determining whether
a given quadratic module $(V,q)$ represents a given ring element $a\in R$.
We will be interested specifically in guaranteeing
that certain quadratic modules represent the multiplicative identity $1\in R$.
Just as for quadratic modules over fields,
it is convenient to rephrase such problems as questions about whether
certain associated quadratic modules represent zero in a non-trivial way.
\begin{defn}
  Let $(V,q)$ be a quadratic module over $R$.
  Then $(V,q)$ is
  \begin{enumerate}
  \item \emph{isotropic} if it primitively represents zero,
  \item \emph{universal} if it represents every unit in $R$.
  \end{enumerate}
\end{defn}
Hence a quadratic space $(V,q)$ over a field $F$ is isotropic
if and only if there is $0\neq x\in V$ with $q(x) = 0$.
Note also that if $(V,q)$ is a quadratic module over a
commutative ring $R$ which represents a unit $a\in R^\times$,
then $a$ is automatically primitively represented.

We will frequently use the following theorem:
\begin{namedtheorem}{Representation Theorem}
  Let $R$ be a semi-local ring
  and let $(V,q)$ be a non-singular quadratic module over $R$.
  Then $(V,q)$ represents $a\in R^\times$ if and only if
  $q\orthosum \langle -a\rangle$ is isotropic.
\end{namedtheorem}
The proof depends on a transversality theorem which we now state.
\begin{defn}
  Let $R$ be a commutative ring and let $(V,q)$ be a quadratic module
  over $R$ which admits a decomposition
  \begin{equation}
    \label{eq:decomp}
    (V,q) = (V_1,q_1)\orthosum\dots\orthosum (V_n,q_n)
  \end{equation}
  Let $x\in V$ and write
  $$
  x = x_1 + \dots + x_n
  $$
  for the unique representation with $x_i\in V_i$ for each $i$.
  Then $x$ is called \emph{transversal} to the decomposition~\eqref{eq:decomp}
  if $q(x_i)\in R^\times$ for each $i$.
\end{defn}
\begin{namedtheorem}{Transversality Theorem}
  Let $R$ be a semi-local ring and let $(V,q)$ be a non-singular quadratic
  module over $R$ which admits a decomposition
  \begin{equation*}
    (V,q) = (V_1,q_1)\orthosum\dots\orthosum (V_n,q_n).
  \end{equation*}
  Suppose $(V,q)$ is isotropic and $n$ is even.
  Then there exists $x\in V$ which is transversal to the decomposition
  and has $q(x) = 0$.
\end{namedtheorem}
\begin{proof}
  See \cite[Thm III.5.2]{baeza78}.
\end{proof}
\begin{proof}[Proof of the representation theorem]
  If $(V,q)$ represents $a\in R^\times$, pick $v\in V$ with $q(v)=a$.
  Then $(q\orthosum \langle -a\rangle )(v,1) = 0$
  and $(v,1)$ is clearly primitive.
  For the other direction,
  it follows from the transversality theorem 
  that we can pick $v\in V$, $x\in R^\times$ with
  $0 = (q\orthosum \langle -a\rangle )(v,x) = q(v) - ax^2$.
  Then $v/x$ has $q(v/x) = q(v)/x^2 = a$ as desired.
\end{proof}

\section{Stiefel complexes over local rings}
\label{sec:connect}
Let $R$ be a commutative ring.
Following~\cite{vogtmann82}, we consider:
\begin{defn}
  Let $(V,q)$ be a quadratic module over $R$.
  \begin{enumerate}[label=(\roman*)]
  \item The \emph{Stiefel complex}
    $X(q)$ is the simplicial complex whose vertices are the \emph{unit
      vectors} of $(V,q)$, meaning elements $v\in V$ with $q(v)=1$,
    and such that the $v_1,\dots ,v_k$
    span a simplex $\lbrack v_1,\dots ,v_k\rbrack$ of $X(q)$,
    also called a \emph{frame},
    if $B_q(v_i,v_j) = 0$ for $i\neq j$.
  \item For $k\in\N$, we let $X_k(q)$ denote the
    poset of frames $\lbrack v_1,\dots ,v_l\rbrack$ of length $l\leq k$
    under the relation
    $\lbrack v_1,\dots ,v_l\rbrack\leq\lbrack w_1,\dots ,w_m\rbrack$
    if $\lbrace v_1,\dots ,v_l\rbrace\subseteq\lbrace w_1,\dots ,w_m\rbrace$.
  \end{enumerate}
\end{defn}
Note that $|X_k(q)|$ is the barycentric subdivision of $|\sk_{k-1}X(q)|$, so
\begin{equation*}
  |X_k(q)|\simeq |\sk_{k-1}X(q)|,
\end{equation*}
for $1\leq k\leq\rank V$.

The goal of this section is to show that if $R$ is valuation ring
subject to certain arithmetic conditions,
then the complex $|X(n\langle 1\rangle )|$ is highly connected when $n$ is large.
Equivalently, we will show that $|X_k(n\langle 1\rangle )|$ is $(k-1)$-spherical for
large $k$ when $n$ is large.

Intuitively, showing connectivity of $X(n\langle 1\rangle )$
will depend on having a sufficient supply of unit
vectors in the link of any simplex.
As in~\cite{vogtmann82}, we therefore consider arithmetic invariants
that force the existence of such unit vectors.
The most obvious of these is:
\begin{defn}
  Let $A$ be a local ring with residue field $\k$.
  We define $m_A\in\N\cup\lbrace\infty\rbrace$ to be the smallest number
  $m$ such that if $V$ is a non-singular quadratic submodule of
  $\E^n_A$ for some $n$ and $\dim_{\k}\redu V\geq m$,
  then $V$ contains a unit vector.
\end{defn}
Note that by the diagonalization theorem,
if $2\in A^\times$ then a non-singular quadratic module $(V,q)$ over $A$
is isometric to a quadratic submodule of $\E^n_A$
for some $n$ if and only if $V$ is non-singular and admits a
diagonalization $\langle d_1,\dots ,d_m\rangle $ in which each $d_i$
is a sum of squares.
\begin{defn}
  Let $R$ be a ring.
  We define the \emph{Pythagoras number}
  $P(R)\in\N\cup\lbrace\infty\rbrace$ to be the smallest number $p$
  such that if $a\in R$ is a sum of squares, then $a$ is a sum of $p$ squares.
\end{defn}
For a field $F$, finiteness of the Pythagoras number $P(F)$
also guarantees the existence of unit vectors in non-singular subspaces
of Euclidean spaces, albeit not as efficiently as finiteness of $m_F$.
The following is~\cite[Prop 1.5]{vogtmann82},
the proof of which we include for completeness:
\begin{lem}\label{lem:pythagoras-unit-vector}
  Let $F$ be a field of characteristic $\neq 2$
  having $P = P(F) < \infty$,
  and let $V\subseteq\E^n_F$ be a non-singular subspace of codimension $k$.
  If $n>Pk$, then $V$ contains a unit vector.
\end{lem}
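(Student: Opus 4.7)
The plan is to trade the problem of finding a unit vector in $V$ for the easier problem of embedding $V^{\bot}$ into a Euclidean space of controlled dimension, then apply Witt cancellation.

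First, since $(V,q\vert_V)$ is a non-singular submodule of $\E_F^n$, Proposition \ref{prop:non-sing-summand} gives a decomposition $\E_F^n \cong V \orthosum V^{\bot}$, so $V^{\bot}$ is non-singular of dimension $k$. The Diagonalization Theorem yields an isometry $V^{\bot} \cong \langle b_1,\dots,b_k\rangle$ with $b_i\in F^\times$. Each $b_i$ is the value of $q$ on some vector of $\E_F^n$, and is therefore a sum of squares in $F$; by the assumption $P(F)=P<\infty$, we may actually write $b_i = \sum_{j=1}^P c_{ij}^2$ for scalars $c_{ij}\in F$.

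These representations give an isometric embedding of $V^{\bot}$ into $\E_F^{Pk}$: writing $\E_F^{Pk}\cong \E_F^P\orthosum\dots\orthosum\E_F^P$ ($k$ blocks), send the $i$-th diagonal generator of $\langle b_1,\dots,b_k\rangle$ to $(c_{i1},\dots,c_{iP})$ inside the $i$-th block; the resulting $1$-dimensional subspaces are pairwise orthogonal and each non-singular. Another application of Proposition \ref{prop:non-sing-summand} yields a decomposition $\E_F^{Pk}\cong V^{\bot}\orthosum W$ with $\dim W=(P-1)k$.

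Combining the two decompositions and using the hypothesis $n>Pk$ to split off a $Pk$-dimensional Euclidean factor from $\E_F^n$:
\[
V \orthosum \E_F^{Pk} \;\cong\; V \orthosum V^{\bot}\orthosum W \;\cong\; \E_F^n\orthosum W \;\cong\; \E_F^{Pk}\orthosum \E_F^{n-Pk}\orthosum W.
\]
Applying the Cancellation Theorem (over $F$, which is semi-local) to cancel the common summand $\E_F^{Pk}$ yields $V\cong \E_F^{n-Pk}\orthosum W$. Since $n-Pk\geq 1$, the summand $\E_F^{n-Pk}$ contains a standard unit vector, and thus so does $V$.

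There is no serious obstacle; the proof is essentially bookkeeping with the structural theorems of Section \ref{sec:homog-quad-R}. The conceptual content is the observation that a finite Pythagoras number makes the embedding cost of any non-singular positive subspace proportional to its dimension, which is exactly what the hypothesis $n>Pk$ encodes.
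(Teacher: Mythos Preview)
Your proof is correct and follows essentially the same approach as the paper's: both embed $V^\bot\cong\langle b_1,\dots,b_k\rangle$ into $\E_F^{Pk}$ using that each $b_i$ is a sum of $P$ squares, then invoke Witt cancellation to exhibit $\E_F^{n-Pk}$ as a summand of $V$. The only cosmetic difference is which common summand gets cancelled---you cancel $\E_F^{Pk}$ from $V\orthosum\E_F^{Pk}\cong\E_F^{Pk}\orthosum\E_F^{n-Pk}\orthosum W$, while the paper cancels $\langle b_1,\dots,b_k\rangle$ from the two diagonalizations of $\E_F^n$---but this is the same argument rearranged.
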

\begin{proof}
  Since $V$ is non-singular,
  we have by Proposition~\ref{prop:non-sing-summand} that
  $\E^n_F\cong V\orthosum V^\bot$.
  Let $\langle a_1,\dots ,a_{n-k}\rangle\cong n\langle 1\rangle\vert_V$
  be a diagonalization of $V$.
  By picking a diagonalization of $V^\bot$ also, we get a diagonalization
  \begin{equation}
    \label{eq:vogtmann-diag1}
    n\langle 1\rangle\cong \langle a_1,\dots ,a_{n-k},b_1,\dots ,b_k\rangle 
  \end{equation}
  of $\E^n_F$.
  Each $b_i\neq 0$ is represented by $n\langle 1\rangle$, i.e. is a sum
  of squares; by assumption each $b_i$ is therefore represented by
  $P\langle 1\rangle$. Thus for each $1\leq i\leq k$, we have an isometry
  \begin{equation}
    \label{eq:vogtmann-diag2}
    P\langle 1\rangle\cong \langle b_i, x_{1,i},\dots ,x_{P-1,i}\rangle
  \end{equation}
  for some $x_{1,i},\dots ,x_{P-1,i}\in F^\times$.
  Since $n>Pk$, we get from~\eqref{eq:vogtmann-diag1} and~\eqref{eq:vogtmann-diag2}
  that
  \begin{align*}
    &\langle b_1,\dots ,b_k, a_1,\dots ,a_{n-k}\rangle \\
    &\qquad\cong n\langle 1\rangle \\
    &\qquad\cong\langle b_1,\dots ,b_k,
      x_{1,1},\dots ,x_{P-1,1},x_{1,2},\dots ,x_{P-1,2},
      \dots ,
      x_{1,k},\dots ,x_{P-1,k},\underbrace{1,\dots ,1}_{n-Pk\text{ times}}\rangle,
  \end{align*}
  so by Witt cancellation
  $$
  \langle  a_1,\dots ,a_{n-k}\rangle
  \cong
  \langle x_{1,1},\dots ,x_{P-1,1},x_{1,2},\dots ,x_{P-1,2},
  \dots ,
  x_{1,k},\dots ,x_{P-1,k},\underbrace{1,\dots ,1}_{n-Pk\text{ times}}\rangle,
  $$
  and in particular $n\langle 1\rangle\vert_V\cong \langle a_1,\dots ,a_{n-k}\rangle$
  represents one as desired.
\end{proof}
\begin{prop}
For a field $F$, 
\begin{equation}
  \label{eq:p-leq-m}
  P(F)\leq m_F.
\end{equation}
\end{prop}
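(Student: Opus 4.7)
The plan is to apply the defining property of $m_F$ directly to the form $m_F\langle a\rangle$. Suppose $a\in F$ is a sum of squares; we want to show $a$ is a sum of $m_F$ squares. WLOG $a\neq 0$, so $a\in F^\times$ and the form $\langle a\rangle$ is non-singular. Since $a$ is a sum of squares, writing $a=c_1^2+\dots+c_n^2$ gives an isometric embedding $\langle a\rangle\hookrightarrow \E^n_F$ via $1\mapsto (c_1,\dots,c_n)$. Taking $m_F$ pairwise-disjoint blocks of $n$ coordinates in $\E^{nm_F}_F$ and repeating this construction yields an isometric embedding
$$
m_F\langle a\rangle = \langle a\rangle\orthosum\dots\orthosum\langle a\rangle\hookrightarrow \E^{nm_F}_F,
$$
whose image is a non-singular quadratic subspace of dimension $m_F$.

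By the defining property of $m_F$, this subspace must contain a unit vector. Unwinding, this produces $x_1,\dots,x_{m_F}\in F$ such that $a(x_1^2+\dots+x_{m_F}^2)=1$. Multiplying both sides by $a$ gives
$$
a = a^2(x_1^2+\dots+x_{m_F}^2) = (ax_1)^2+\dots+(ax_{m_F})^2,
$$
exhibiting $a$ as a sum of $m_F$ squares, as desired. Since $a$ was arbitrary, $P(F)\leq m_F$.

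There is no real obstacle here; the only thing to be slightly careful about is the embedding $m_F\langle a\rangle\hookrightarrow\E^{nm_F}_F$, which uses the characterization (already noted in the paper after the definition of $m_F$) that a non-singular diagonal form $\langle d_1,\dots,d_m\rangle$ embeds in some Euclidean space exactly when each $d_i$ is a sum of squares. The key trick is simply to work with $m_F$ copies of $\langle a\rangle$ rather than $\langle a\rangle\orthosum (m_F-1)\langle 1\rangle$, since then the unit vector equation $a\sum x_i^2=1$ can be rescaled by $a$ to directly express $a$ itself as a sum of $m_F$ squares.
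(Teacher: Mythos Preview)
Your argument is correct and rather slick, but it takes a genuinely different route from the paper's proof. The paper proceeds by a downward induction: given $a$ as a sum of $n>m_F$ squares, it picks $w\in\E^n_F$ with $q(w)=a$, finds a unit vector in $w^\bot$ (which has dimension $n-1\geq m_F$), diagonalizes $\E^n_F$ starting from $\{v,w\}$, and then uses Witt cancellation to conclude that $a$ is already a sum of $n-1$ squares; iterating brings $a$ down to $m_F$ squares. Your approach bypasses both the induction and Witt cancellation by working directly with the form $m_F\langle a\rangle$ and rescaling the resulting unit-vector identity. This is shorter and more elementary; the paper's method, on the other hand, generalizes more readily to the Shapiro-type proposition that follows it, where one needs unit vectors in intersections $e^\bot\cap f^\bot$.

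One small omission: your claim that ``$a\in F^\times$ so $\langle a\rangle$ is non-singular'' fails when $\operatorname{char} F=2$, since then $B_{\langle a\rangle}=0$. The paper handles this separately by observing that $x_1^2+\dots+x_n^2=(x_1+\dots+x_n)^2$ in characteristic $2$, so $P(F)=1$ and the inequality is trivial. You should add this sentence at the start; after that your argument is complete. (You may also want to say explicitly that the case $m_F=\infty$ is vacuous.)
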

\begin{proof}
  If $\operatorname{char} F = 2$, then
  $x_1^2+\dots +x_n^2 = (x_1+\dots +x_n)^2$
  for all $x_1,\dots ,x_n\in F$,
  so $P(F) = 1$, and hence the inequality holds trivially.

  Otherwise, assuming $m_F<\infty$ and that $0\neq a\in F$ is a sum of $n>m_F$
  squares, pick $w\in\E^n_F$ with $q(w) = a$,
  where we write $q = n\langle 1\rangle$ for short.
  Then $w^\bot\subset\E^n_F$ has dimension $n-1\geq m_F$,
  so there is $v\in w^\bot$ with $q(v)=1$.
  Diagonalizing $\E^n_F$ starting with $\lbrace v,w\rbrace$,
  we find that $\E^n_F\cong\langle 1,a,d_1,\dots ,d_{n-2}\rangle$.
  By Witt cancellation, 
  we then have $\E^{n-1}_F\cong\langle a,d_1,\dots ,d_{n-2}\rangle$,
  so $a$ is a sum of $n-1$ squares. 
  Proceeding by induction, we find that $a$ is a sum of $m_F$ squares.
\end{proof}
In \cite{vogtmann82}, Vogtmann credits the following result, 
which generalizes the previous proposition,
to Daniel Shapiro:\footnote{
  The proof given by Vogtmann does not work in general, and it
  was pointed out to me by Kasper Andersen that in fact the statement
  of the proposition is false for $F = \mathbb F_3$.
  Nevertheless, $F = \mathbb F_3$ is the only field for which the
  statement fails.
}
\begin{prop}
  Let $F\neq\mathbb F_3$ be a field of characteristic $\neq 2$.
  Suppose there is $k$ so that for each pair of unit vectors
  $e,f\in\E^n_F$, $n\geq k$, the subspace $e^\bot\cap f^\bot$ contains
  a unit vector. Then $P(F)\leq k-2$.
\end{prop}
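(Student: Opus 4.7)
The plan is to strengthen the reduction in the preceding proposition so that each step decreases the number of squares by two rather than by one. Concretely, given $a \in F^\times$ a sum of $n$ squares with $n \geq k$, write $a = q(w)$ for $w \in \E^n_F$ and aim to produce two mutually orthogonal unit vectors $v_1, v_2 \in w^\bot$. Diagonalising $\E^n_F$ starting from $\{w, v_1, v_2\}$ and cancelling two copies of $\langle 1 \rangle$ via Witt then yields $\E^{n-2}_F \cong \langle a\rangle \orthosum U$ for some form $U$ of rank $n - 3$, exhibiting $a$ as a sum of $n - 2$ squares.

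The first unit vector $v_1$ is produced much as in the preceding proof. Fix $e_1 = (1, 0, \ldots, 0)$ and seek $\alpha, \beta \in F$ with $\beta \neq 0$ and $q(w - \alpha e_1) = \beta^2$; writing $x_1$ for the first coordinate of $w$, this becomes the equation $\beta^2 - (x_1 - \alpha)^2 = a - x_1^2$, whose solvability reduces to the existence of a parameter $t \in F^\times$ with $t^2 \neq x_1^2 - a$ --- possible as long as $|F^\times| \geq 3$. Then $f_1 := (w - \alpha e_1)/\beta$ is a unit vector, linearly independent of $e_1$ (unless $w \in \Span(e_1)$, in which case $a$ is already a square and we are done), with $w = \alpha e_1 + \beta f_1 \in \Span(e_1, f_1)$. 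Applying the hypothesis to the pair $(e_1, f_1)$ then yields $v_1 \in e_1^\bot \cap f_1^\bot \subseteq w^\bot$ with $q(v_1) = 1$.

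To obtain $v_2$, set $e_2 := v_1$ and seek a second unit vector $f_2 = \alpha' w + \beta' v_1$ in $\Span(w, v_1)$ with $\alpha' \neq 0$. Since $w \bot v_1$, the requirement $q(f_2) = 1$ reads $a\alpha'^2 + \beta'^2 = 1$; a solution with $\alpha' \neq 0$ corresponds to an isotropic vector $(1, \delta, \gamma)$ of $\langle a, 1, -1\rangle$ with $\gamma \neq 0$, which is parametrised by $\gamma = (t^2 + a)/(2t)$, $\delta = (a - t^2)/(2t)$ for any $t \in F^\times$ with $t^2 \neq -a$, again requiring $|F^\times| \geq 3$. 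The hypothesis applied to $(v_1, f_2)$ now produces a unit vector $v_2 \in v_1^\bot \cap f_2^\bot$; since $v_2 \bot v_1$ and $\alpha' B_q(v_2, w) + \beta' B_q(v_2, v_1) = 0$, the fact that $\alpha' \neq 0$ forces $v_2 \bot w$, giving the desired pair.

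The bound $P(F) \leq k - 2$ then follows by iteration: given $a$ a sum of $m$ squares with $m \geq k - 1$, embed $w$ into $\E^N_F$ with $N := \max(m, k)$ by padding with zeros, apply the reduction by two to obtain $a$ as a sum of $\max(m - 2, k - 2)$ squares, and repeat until the count drops to $k - 2$. The main technical obstacle is the solvability of the two auxiliary quadratic equations --- in each the parameter $t$ must avoid at most two forbidden values in $F^\times$ --- which succeeds precisely when $|F^\times| \geq 3$. This fails exactly for $F = \mathbb F_3$ or $\character F = 2$, accounting for the exclusions in the hypothesis.
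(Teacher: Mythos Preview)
Your proof is correct. Both your argument and the paper's rest on the same key observation: once you can write $w$ as a linear combination of two unit vectors $e,f$, the hypothesis yields a unit vector in $e^\bot\cap f^\bot\subseteq w^\bot$. The difference is in the bookkeeping. The paper embeds $\E^n_F$ into $\E^{n+1}_F$, takes $e$ to be the new basis vector, constructs a single companion unit vector $f=(y/x)e+w$ (after the change of variable $a\mapsto c=a/x^2=1-(y/x)^2$), applies the hypothesis once, and cancels $\langle 1,1\rangle$ from $\E^{n+1}_F$ to show $a$ is a sum of $n-1$ squares; iterating from $n=k-1$ down gives $P(F)\leq k-2$. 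You instead stay in $\E^n_F$, apply the trick twice --- first with a standard basis vector to produce $v_1\in w^\bot$, then with $v_1$ itself to produce $v_2\in w^\bot\cap v_1^\bot$ --- and cancel $\langle 1,1\rangle$ from $\E^n_F$ to drop by two. Your version avoids the auxiliary rescaling $a\mapsto c$ and works with $a$ throughout, at the cost of running the construction twice per step and needing the padding $\E^m_F\hookrightarrow\E^k_F$ when $m=k-1$; the paper's version is slightly cleaner per step but introduces the extra dimension and the change of variable. Both invoke exactly the same arithmetic obstruction (writing an element of $F$ as $x^2-y^2$ with $x\neq 0$), so the exclusion of $\mathbb F_3$ enters identically.
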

\begin{proof}
  Let $a\in F$. We claim that we can write $a = x^2-y^2$
  where $x\neq 0$. For $a=0$ this is obvious, as we can
  take $x = y = 1$.
  For $a\neq 0$, we claim that $-a \neq z^2$ for some $z\in F^\times$.
  As there are at most two $z$ with $z^2 = - a$,
  this is possible as long as $\# F^\times > 2$,
  and the latter holds since $F$ is neither $\mathbb F_2$ or $\mathbb F_3$.
  Then
  $$
  a = \left(\frac{z+az^{-1}} 2\right)^2 - \left(\frac{z-az^{-1}} 2\right)^2
  $$
  as claimed.

  Assume $a\in F$ is a sum of $n \geq k-1$ squares for some $n$.
  As shown above, we may write $a = x^2 - y^2$ with $x\neq 0$.
  Write $c = a/x^2$ and note that $c = 1 - (y/x)^2$.
  Since $a$ is a sum of $n$ squares, so is $c$.
  Pick $w\in\E^n_F$ with $q(w) = c$, where $q = n\langle 1\rangle$.
  We view $\E^n_F$ as a subspace of $\E^{n+1}_F$,
  and let $e$ denote the $(n+1)$st standard basis element of $\E^{n+1}_F$.
  In particular, $q(e)=1$.
  Putting $f = (y/x)e+w$, we then have $q(f) = (y/x)^2 + 1 - (y/x)^2 = 1$.

  We have $n+1\geq k$,
  so by assumption $e^\bot\cap f^\bot = e^\bot\cap w^\bot\cap f^\bot$ 
  contains a unit vector $v$.
  Diagonalize $\E^n_F$ starting with the vectors $e,v,w$,
  so that $\E^{n+1}_F\cong\langle 1,1,c,d_1,\dots ,d_{n-3}\rangle$.
  By Witt cancellation we thus have
  $\E^{n-1}_F\cong\langle c,d_1,\dots ,d_{n-3}\rangle$.
  It follows that $c$ is a sum of $n-1$ squares.
  Proceeding inductively, we find that $c$ is a sum of $k-1$ squares.
\end{proof}
\begin{lem}\label{lem:split-off-kernel}
  Let $R$ be a semi-hereditary\footnote{
    Recall that a commutative ring $R$ is \emph{semi-hereditary}
    if every finitely-generated submodule of a projective module is projective.
  }ring, and let
  $$
  0\to K\to E\xrightarrow f F
  $$
  be an exact sequence of $R$-modules,
  where $E$ and $F$ are projective
  and $E$ is finitely-generated.
  Then $K$ is a direct summand of $E$.
\end{lem}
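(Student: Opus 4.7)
The plan is to obtain the splitting by showing that $\im f$ is projective, so that the short exact sequence
$$0\to K\to E\to\im f\to 0$$
splits in the standard way.

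First I would observe that $\im f$ is a submodule of the projective module $F$, and that it is finitely generated since it is a quotient of the finitely-generated module $E$. By the definition of semi-hereditary (recalled in the footnote), any finitely-generated submodule of a projective module is projective, so $\im f$ is projective as an $R$-module.

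Next, I would factor the given exact sequence through its image, giving a short exact sequence
$$0\to K\to E\to\im f\to 0.$$
Since $\im f$ is projective, the identity map $\im f\to\im f$ lifts along $E\twoheadrightarrow\im f$ to a section, so the sequence splits and $E\cong K\oplus\im f$. In particular, $K$ is a direct summand of $E$, which is what was to be shown.

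There is no real obstacle here; the whole content is the observation that semi-hereditariness is designed precisely to make images of maps between finitely-generated projectives projective, which is exactly what is needed to split off the kernel.
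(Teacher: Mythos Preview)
Your proof is correct and is essentially identical to the paper's: both observe that $\im f$ is a finitely-generated submodule of the projective module $F$, hence projective by the semi-hereditary hypothesis, and conclude by splitting the short exact sequence $0\to K\to E\to\im f\to 0$. The only cosmetic difference is that the paper cites \cite{albrecht61} for the projectivity of $\im f$, whereas you invoke the footnoted definition directly.
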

\begin{proof}
  The image $F' = f(E)\subseteq F$ is finitely-generated,
  so $F'$ is projective by~\cite{albrecht61}.
  Hence
  $$
  0\to K\to E\xrightarrow f F'\to 0
  $$
  splits.
\end{proof}
Recall that semi-hereditary local rings are the same 
as valuation rings. (Indeed~\cite[Thm 64]{kaplansky74}
says that an integral domain $R$ is semi-hereditary if and only
if $R_{\mathfrak m}$ is a valuation ring for each maximal ideal $\mathfrak m$
of $R$; and~\cite{jensen66} shows that a semi-hereditary local ring
is integral.)
\begin{lem}\label{lem:splitting-off-radical}
  Let $A$ be a local ring with maximal ideal $\mathfrak m$,
  and let $(V,q)$ be a quadratic $A$-module.
  Then $V$ admits a decomposition
  $$
  V\cong R\orthosum W,
  $$
  where $W$ is free and non-singular,
  and $q(x)\in\mathfrak m$ for each $x\in R$.
\end{lem}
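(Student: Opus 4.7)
The plan is to lift a suitable decomposition from the residue field $\k = A/\mathfrak m$ to $V$ and then invoke Proposition~\ref{prop:non-sing-summand} to obtain the orthogonal splitting. Since the paper works throughout under the assumption $2 \in A^\times$, I will use that quadratic forms over $\k$ admit diagonalizations.

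First I would pass to $\bar V = V/\mathfrak m V$ equipped with the induced quadratic form $\bar q$. Because $\character \k \neq 2$, the Diagonalization Theorem yields $\bar q \cong \langle a_1,\dots ,a_n\rangle$; after reordering, $a_1,\dots ,a_r \in \k^\times$ and $a_{r+1} = \dots = a_n = 0$. The first $r$ basis vectors span a non-singular subspace $\bar W_0 \subseteq \bar V$, whose $B_{\bar q}$-orthogonal complement is the span of the remaining vectors and carries the zero form.

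Next I would lift. Pick representatives $w_1,\dots ,w_r \in V$ of the chosen basis of $\bar W_0$. Their Gram matrix $(B_q(w_i,w_j))$ reduces modulo $\mathfrak m$ to $\operatorname{diag}(2a_1,\dots ,2a_r)$, which is invertible in $\k$ and hence already invertible in $A$. Therefore the $w_i$ are linearly independent over $A$, the submodule $W := Aw_1 + \dots + Aw_r$ is free of rank $r$, and $(W,q|_W)$ is non-singular. Proposition~\ref{prop:non-sing-summand} then produces an orthogonal decomposition $V = W \oplus W^\bot$, and setting $R := W^\bot$ gives the required $V \cong R \orthosum W$ with $W$ free and non-singular.

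Finally I would verify that $q(x) \in \mathfrak m$ for every $x \in R$. For such $x$, reducing $B_q(x,w_i) = 0$ mod $\mathfrak m$ shows $\bar x \in \bar W_0^\bot$, so $\overline{W^\bot} \subseteq \bar W_0^\bot$; as both are complements of $\bar W_0$ inside $\bar V$, they have the same $\k$-dimension $n-r$ and must coincide. Since $\bar q$ vanishes on $\bar W_0^\bot$, it follows that $\bar q(\bar x) = 0$, i.e.\ $q(x) \in \mathfrak m$. The one delicate step is this last identification of $\overline{W^\bot}$ with $\bar W_0^\bot$, but the explicit non-singular lift $W$ reduces it to a dimension comparison inside $\bar V = \bar W_0 \oplus \overline{W^\bot}$.
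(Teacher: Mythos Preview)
Your argument is correct and follows the same overall strategy as the paper: decompose $\bar V$ over the residue field into a non-singular part and its radical, then lift. The paper outsources the lifting step to \cite[Cor~I.3.4]{baeza78}, whereas you carry it out explicitly by lifting an orthogonal basis of the non-singular piece and invoking Proposition~\ref{prop:non-sing-summand}; this hands-on lifting is essentially the content of Lemma~\ref{lem:lift-nonsing-subsp} in the appendix, so your proof is more self-contained but not genuinely different. One cosmetic point: the Diagonalization Theorem as stated in the paper applies only to non-singular modules, so strictly speaking you are using the elementary fact that any quadratic form over a field of characteristic $\neq 2$ diagonalizes, not that theorem per se.
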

\begin{proof}
  Over $\k = A/\mathfrak m$,
  we have $\redu V\cong\mathbf R\orthosum\mathbf W$,
  where $\mathbf R$ is the radical of $\redu V$
  and $\mathbf W$ is non-singular.
  By \cite[Cor I.3.4]{baeza78},
  we then have a decomposition $V\cong R\orthosum W$
  where $\redu W = \mathbf W$
  and $\redu R = \mathbf R$.
  It follows that $W$
  is non-singular and $\overline{q(x)} = 0$, i.e. $q(x)\in\mathfrak m$,
  for each $x\in R$.
\end{proof}
Given a quadratic module $(V,q)$ over $A$,
let us write $W\leq V$ when $W$ is a quadratic submodule of $V$,
i.e. $W$ is a direct summand in $V$.
Note that this defines a partial order on the set of submodules of $V$.
\begin{lem}\label{lem:large-nonsing-subspace}
  Let $A$ be a valuation ring with $2\in A^\times$,
  and let $\lbrack u_1,\dots ,u_r\rbrack$ and $\lbrack v_1,\dots ,v_s\rbrack$
  be possibly empty frames in $\E^n_A$ spanning
  quadratic submodules $U$ and $V$ respectively.
  Then $U^\bot\cap V^\bot$ contains a non-singular subspace
  of dimension at least $n-r-2s$.
\end{lem}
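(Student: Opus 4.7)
The plan is to exploit that $U$ is non-singular (being isometric to $r\langle 1\rangle$) via Proposition~\ref{prop:non-sing-summand} to decompose $\E^n_A = U\orthosum W$ with $W = U^\bot$, and then to produce a large non-singular quadratic submodule of $U^\bot\cap V^\bot$ living inside $W$. If $\pi\colon\E^n_A\to W$ denotes the orthogonal projection, then $B_q(w,v) = B_q(w,\pi(v))$ for every $w\in W$ and $v\in V$, so $U^\bot\cap V^\bot$ coincides with the orthogonal complement of $V':=\pi(V)$ taken inside $W$, which I denote $V'^{\bot_W}$. The submodule $V'$ is generated by the $s$ elements $\pi(v_1),\dots,\pi(v_s)$, hence is a finitely generated submodule of the free module $W$; since $A$ is semi-hereditary, $V'$ is projective, and since $A$ is local, $V'$ is in fact free of some rank $s''\leq s$. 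I fix a basis $e_1,\dots,e_{s''}$.

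Next I consider the evaluation map $\psi\colon W\to A^{s''}$ given by $w\mapsto (B_q(w,e_1),\dots,B_q(w,e_{s''}))$, whose kernel is exactly $V'^{\bot_W}$. By Lemma~\ref{lem:split-off-kernel}, this kernel is a direct summand of $W$, so $V'^{\bot_W}$ inherits the structure of a quadratic submodule. Applying Lemma~\ref{lem:splitting-off-radical} to $V'^{\bot_W}$ then yields a decomposition $V'^{\bot_W}\cong R\orthosum W^\sharp$ with $W^\sharp$ free and non-singular, so the proof will be complete once I show $\rank_A W^\sharp \geq n-r-2s$.

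For this final estimate I would pass to the residue field $\k$. The splitting $W\cong V'^{\bot_W}\oplus\im\psi$ reduces to a direct sum decomposition over $\k$, yielding $\dim_{\k}\redu{V'^{\bot_W}} = (n-r) - \dim_{\k}\redu{\im\psi}$. On the other hand, the inclusion $\redu{V'^{\bot_W}}\subseteq\redu{V'}^{\bot_{\redu W}}$ inside the non-singular space $\redu W$, together with the fact that the right-hand side has codimension exactly $s''$ while $\dim_{\k}\redu{\im\psi}\leq s''$, forces $\dim_{\k}\redu{V'^{\bot_W}} = n-r-s''$ and makes the inclusion an equality. The radical of $\redu{V'^{\bot_W}}$ therefore agrees with $\operatorname{rad}\redu{V'}$, which has dimension at most $s''$; since the rank of $R$ in Lemma~\ref{lem:splitting-off-radical} equals the dimension of this radical, I conclude $\rank_A W^\sharp \geq (n-r-s'') - s'' \geq n-r-2s$, as required. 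The most delicate point is the equality $\redu{V'^{\bot_W}} = \redu{V'}^{\bot_{\redu W}}$, which crucially relies on the splitting of $\psi$ provided by semi-heredity of $A$ to ensure that the rank of $V'^{\bot_W}$ does not drop upon reducing mod $\mathfrak m$.
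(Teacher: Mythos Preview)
Your projection-and-reduce strategy is legitimate and genuinely different from the paper's argument, but the last paragraph has a gap precisely at the step you flag as delicate. You assert that $\redu{V'}^{\bot_{\redu W}}$ has codimension exactly $s''$ in $\redu W$, and then sandwich to conclude $\redu{V'^{\bot_W}}=\redu{V'}^{\bot_{\redu W}}$. The splitting of $\psi$ does guarantee that $\redu{V'^{\bot_W}}$ has the expected dimension, but it does \emph{not} control the image of $V'$ in $\redu W$: if $V'$ fails to be a direct summand of $W$, that image can have dimension $d<s''$, whence $\redu{V'}^{\bot_{\redu W}}$ has codimension $d<s''$ and your inclusion is strict. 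For instance, over $A=\k\llbracket t\rrbracket$ with $\character\k\neq 2$, take $n=3$, $u_1=e_1$, and $v_1=ae_1+te_2$ with $a\in A^\times$ satisfying $a^2+t^2=1$; then $V'=A\cdot te_2$ has $s''=1$ but reduces to $0$ in $\redu W$, and $\redu{V'^{\bot_W}}=\k\bar e_3\subsetneq\redu W=\redu{V'}^{\bot_{\redu W}}$. Your subsequent identification of radicals is therefore unjustified.

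The repair is to drop $\redu{V'}$ from the argument altogether. With $\rho:=\rank_A\im\psi\leq s''$, the splitting $W\cong V'^{\bot_W}\oplus\im\psi$ yields $\dim_\k\redu{V'^{\bot_W}}=(n-r)-\rho$. Inside the non-singular space $\redu W$, the radical of $\redu{V'^{\bot_W}}$ is contained in $\bigl(\redu{V'^{\bot_W}}\bigr)^{\bot_{\redu W}}$, which has dimension exactly $\rho$. Hence $\rank R\leq\rho$ and $\rank W^\sharp\geq(n-r-\rho)-\rho\geq n-r-2s$, as required.

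For comparison, the paper does not introduce the projection $V'=\pi(V)$ at all. After the same decomposition $U^\bot\cap V^\bot=R\orthosum W$, it exploits non-singularity of $U^\bot$ to produce a summand $S\leq U^\bot$ with $\delta(S)=R^*$ under the duality isomorphism, and shows $S\cap V^\bot=0$ by observing that $\delta(S\cap V^\bot)$ is simultaneously a direct summand of $R^*$ and contained in $\mathfrak mR^*$. A rank count $\rank S+\rank V^\bot\leq n$ then gives $\rank R\leq s$. Your corrected route reaches the same bound through the residue field; its modest advantage is that the role of the radical is made completely explicit.
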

\begin{proof}
  By definition, there is an exact sequence
  \begin{equation}
    \label{eq:intersection-def-exact-seq}
    0\to U^\bot\cap V^\bot\to U^\bot\to V^*.
  \end{equation}
  It follows from Lemma~\ref{lem:split-off-kernel} that
  $U^\bot\cap V^\bot\leq U^\bot$.
  Pick a decomposition
  $$
  U^\bot\cap V^\bot\cong R\orthosum W
  $$
  as in Lemma~\ref{lem:splitting-off-radical}.
  Since $R\leq U^\bot$ and $U^\bot$ is non-singular,
  there is an $S\leq U^\bot$ which maps isomorphically to
  $R^*\leq (U^\bot)^*$ under the duality isomorphism
  $$
  \begin{tikzcd}[row sep = tiny]
    U^\bot\arrow[r,"\delta"]
    & (U^\bot )^*\\
    x\arrow[r,mapsto]
    & B_{\langle n\rangle }(x,{-})\vert_{U^\bot}.
  \end{tikzcd}
  $$
  We claim that $S\cap V^\bot = 0$.
  Indeed, note that by exactness of
  $$
  0\to S\cap V^\bot\to S\to V^*
  $$
  and Lemma~\ref{lem:split-off-kernel}, we have that
  $S\cap V^\bot\leq S$.
  Hence $\delta (S\cap V^\bot)\leq R^*$.
  But $S\cap V^\bot\subseteq U^\bot\cap V^\bot$,
  so by construction of $R$ we have $\delta (S\cap V^\bot)\subseteq\mathfrak mR^*$.
  Since $\delta (S\cap V^\bot)$ is a direct summand of $R^*$
  but is contained in $\mathfrak mR^*$, we must have $\delta (S\cap V^\bot) = 0$,
  and hence $S\cap V^\bot = 0$.
  But then
  $$
  \dim R^* + n - s = \dim S + \dim V^\bot = \dim (S+V^\bot)\leq n,
  $$
  so $\dim R = \dim R^*\leq s$.
  It follows that $\dim W = \dim (U^\bot\cap V^\bot) - \dim R\geq n-r-2s$,
  proving the lemma.
\end{proof}
\begin{prop}\label{prop:unit-in-intersection}
  Let $A$ be a valuation ring with residue field $\k$,
  and assume that $2\in A^\times$.
  Let $\lbrack u_1,\dots ,u_r\rbrack$ and $\lbrack v_1,\dots ,v_s\rbrack$
  be possibly empty frames in $\E^n_A$ spanning
  quadratic submodules $U$ and $V$ respectively.
  Then $U^\bot\cap V^\bot$ contains a unit vector if either
  \begin{enumerate}[label=(\roman*)]
  \item $n\geq m_A+r+2s$; or
  \item $n\geq m_A + r+s$ and $\k$ is formally real; or
  \item $n > 2P(\k )r + s$ and $A$ is henselian; or
  \item $n > P(\k )r + s$, $A$ is henselian, and $\k$ is formally real.
  \end{enumerate}
\end{prop}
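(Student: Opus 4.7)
The proof divides into four cases, handled in two pairs depending on whether the governing arithmetic invariant is $m_A$ (cases (i) and (ii)) or $P(\k)$ together with the henselian property (cases (iii) and (iv)). In every case we first find a sufficiently large non-singular submodule of $U^\bot\cap V^\bot$, and then invoke the relevant arithmetic hypothesis to produce a unit vector in it.

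For case (i), Lemma \ref{lem:large-nonsing-subspace} provides a non-singular $W\leq U^\bot\cap V^\bot$ of rank $\geq n-r-2s\geq m_A$, and the definition of $m_A$ supplies a unit vector in $W$. Case (ii) improves on this by exploiting that $\k$ formally real makes $n\langle 1\rangle$ anisotropic over $\k$; any subspace of an anisotropic quadratic space in characteristic $\neq 2$ has trivial radical, so $\redu{U^\bot\cap V^\bot}$ is non-singular over $\k$, and hence $U^\bot\cap V^\bot$ is itself non-singular over $A$ by Proposition \ref{prop:singular-reduc}. The exact sequence $0\to U^\bot\cap V^\bot\to U^\bot\to V^*$ forces its rank to be $\geq n-r-s\geq m_A$, and a unit vector again follows from the definition of $m_A$.

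For cases (iii) and (iv), the stated bounds are asymmetric in $r$ and $s$, which suggests reducing the second frame first. Since $V\cong\E^s_A$ is non-singular, Witt cancellation gives $V^\bot\cong\E^{n-s}_A$. Let $\pi\colon\E^n_A\to V^\bot$ be the orthogonal projection and set $U''=\pi(U)\leq V^\bot$; this is a finitely generated (hence projective, since $A$ is semi-hereditary) submodule of rank at most $r$. The identity $B_q(w,u)=B_q(w,\pi(u))$ for $w\in V^\bot$ and $u\in U$ shows that $U^\bot\cap V^\bot$ coincides with the orthogonal complement of $U''$ computed inside $V^\bot$. The proof of Lemma \ref{lem:large-nonsing-subspace} uses only the rank of its second factor, not its non-singularity, so applied within $V^\bot$ with trivial first factor and second factor $U''$ it yields a non-singular $W\leq(U'')^\bot$ of rank at least $(n-s)-2r$; in case (iv) we may instead take $W=(U'')^\bot$ itself, of rank at least $n-s-r$, since the anisotropy of $\E^{n-s}_\k$ makes $\redu{(U'')^\bot}$ non-singular and Proposition \ref{prop:singular-reduc} applies.

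Reducing modulo $\mathfrak m$, $\redu W\subseteq\E^{n-s}_\k$ is a non-singular subspace of codimension at most $2r$ in case (iii) and at most $r$ in case (iv); Lemma \ref{lem:pythagoras-unit-vector}, applied inside $\E^{n-s}_\k$, then produces a unit vector $\bar w\in\redu W$ precisely under the respective hypotheses $n-s>2P(\k)r$ and $n-s>P(\k)r$. Finally we lift via Hensel: any $w_0\in W$ reducing to $\bar w$ has $q(w_0)\in A^\times$, and since $A$ is henselian with $2\in A^\times$, Hensel's lemma applied to $X^2-q(w_0)$ produces $a\in A^\times$ with $a^2=q(w_0)$, so that $w_0/a\in W\leq U^\bot\cap V^\bot$ is the desired unit vector. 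The main obstacle is the swap in cases (iii)--(iv): applying Lemma \ref{lem:pythagoras-unit-vector} directly to the non-singular submodule of $U^\bot\cap V^\bot$ produced by Lemma \ref{lem:large-nonsing-subspace} would only yield the weaker bound $n>P(\k)(r+2s)$, which does not match the stated range in the regime where $r$ dominates $s$.
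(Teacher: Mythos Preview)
Your argument is correct and, for cases (i) and (ii), identical to the paper's. For (iii) and (iv) you reach the same destination as the paper—a non-singular summand of $U^\bot\cap V^\bot$ of codimension $\leq 2r$ (resp.\ $\leq r$) inside $V^\bot\cong\E^{n-s}_A$, to which Lemma~\ref{lem:pythagoras-unit-vector} over $\k$ and a Hensel lift apply—but you get there by a detour: you project $U$ to $U''=\pi(U)\leq V^\bot$ and then re-run the proof of Lemma~\ref{lem:large-nonsing-subspace} inside $V^\bot$ with the non-frame $U''$ as second factor, checking along the way that the lemma's proof only needs the rank of that factor. The paper instead exploits that Lemma~\ref{lem:large-nonsing-subspace} carries no ordering constraint on its two frames: feeding in $(V,U)$ rather than $(U,V)$ directly yields a non-singular $W\leq V^\bot\cap U^\bot$ of rank $\geq n-s-2r$ sitting inside $V^\bot$, and then Lemma~\ref{lem:pythagoras-unit-vector} and Lemma~\ref{lem:hensel-isotropy} finish (iii); case (iv) is the same with $W=U^\bot\cap V^\bot$ itself viewed in $V^\bot$. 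Your projection approach is valid and your observation about the lemma's proof is correct, but the simple swap of $U$ and $V$ is shorter and avoids the need to extend Lemma~\ref{lem:large-nonsing-subspace} beyond its stated hypotheses.
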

\begin{proof}
  As in the proof of the previous lemma,
  we find that $U^\bot\cap V^\bot\leq\E^n_A$.
  If $\k$ is formally real, then $U^\bot\cap V^\bot$
  is non-singular and (ii) follows from the definition of $m_A$.
  Similarly, (iv) follows from Lemma~\ref{lem:pythagoras-unit-vector}
  and Lemma~\ref{lem:hensel-isotropy}.
  
  In the general case, Lemma~\ref{lem:large-nonsing-subspace}
  guarantees the existence of a non-singular quadratic subspace
  $W\leq U^\bot\cap V^\bot\leq\E^n_A$ with $\dim W\geq n-r-2s$.
  Hence (i) follows from the definition of $m_A$
  and (iii) follows from Lemma~\ref{lem:pythagoras-unit-vector}
  and Lemma~\ref{lem:hensel-isotropy},
  along with the fact that $W$ is a codimension $2s$ subspace of $U^\bot$
  and that $U^\bot\cong\E^{n-r}_A$,
  as one sees by applying Witt cancellation to the decomposition
  $\E^r_A\orthosum U^\bot\cong U\orthosum U^\bot\cong\E^n_A$.
\end{proof}
\begin{prop}\label{prop:unit-in-intersection2}
  Let $A$ be a valuation ring with $2\in A^\times$,
  and denote the quotient field of $A$ by $K$.
  Let $\lbrack u_1,\dots ,u_r\rbrack$ and $\lbrack v_1,\dots ,v_s\rbrack$
  be possibly empty frames in $\E^n_A$ spanning
  quadratic submodules $U$ and $V$ respectively, and with $r\geq s\geq 0$.
  Then $U^\bot\cap V^\bot$ contains a unit vector if either
    \begin{enumerate}[label=(\roman*)]
  \item $n\geq m_K+2r+s$; or
  \item $n\geq m_K+r+s$ and $K$ is formally real; or
  \item $n > 2P(K )r + s$; or
  \item $n > P(K)r + s$ and $K$ is formally real.
  \end{enumerate}
\end{prop}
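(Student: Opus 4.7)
The plan is to mirror the proof of Proposition~\ref{prop:unit-in-intersection}, with the quotient field $K$ playing the role of the residue field $\k$ and the Representation Theorem playing the role of Hensel's lemma. Concretely, one extracts a non-singular submodule $W\leq U^\bot\cap V^\bot$ over $A$, shows that $W_K$ contains a unit vector using the hypothesis on $K$, and then descends the resulting isotropy of $W_K\orthosum\langle -1\rangle$ from $K$ to $A$ by clearing denominators.

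For cases (i) and (iii), Lemma~\ref{lem:large-nonsing-subspace} provides a non-singular $W\leq U^\bot\cap V^\bot$ of rank at least $n-r-2s$, embedded in $U^\bot\cong\E^{n-r}_A$ at codimension at most $2s$. Tensoring to $K$, the subspace $W_K$ has the same rank and codimension, and the hypothesis on $m_K$ (case (i)) or on $P(K)$ (case (iii), via Lemma~\ref{lem:pythagoras-unit-vector} applied over the field $K$) produces a unit vector $w\in W_K$. For the formally real cases (ii) and (iv), one uses that $\E^n_K$ is anisotropic, so that $(U^\bot\cap V^\bot)_K$ is itself non-singular of dimension at least $n-r-s$ over $K$, which improves the bounds by $s$.

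Given a unit vector $w\in W_K$, the Representation Theorem over $K$ shows that $W_K\orthosum\langle -1\rangle$ is isotropic, witnessed by the primitive vector $(w,1)$. Since $A$ is a valuation ring, its finitely generated fractional ideals are principal, so one can find $\lambda\in K^\times$ such that $(\lambda w,\lambda)\in W\oplus A$ is primitive; since $q(\lambda w)-\lambda^2=\lambda^2(q(w)-1)=0$, this exhibits $W\orthosum\langle -1\rangle$ as isotropic over $A$, and the Representation Theorem applied to the semi-local ring $A$ gives the desired unit vector in $W\leq U^\bot\cap V^\bot$. The main technical point is the descent from $K$ to $A$: one must verify that the rescaled vector actually lies in $W$ rather than merely in $\E^n_A$ and is primitive. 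The formally real cases (ii) and (iv) require extra bookkeeping, since $(U^\bot\cap V^\bot)_K$ may be non-singular over $K$ while the underlying $A$-module is singular (for instance when $\k$ is not formally real), so the non-singular submodule $W$ used in the Representation Theorem over $A$ must still be chosen with care in order to control $\dim W_K$.
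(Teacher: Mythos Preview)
Your plan is the paper's plan: the one-line proof in the paper reads ``analogous to the proof of the previous proposition, using Lemma~\ref{lem:DVR-isotropy} to push unit vectors over $K$ into unit vectors over $A$,'' and your descent step (rescale an isotropic vector of $W_K\orthosum\langle -1\rangle$ to a primitive one over $A$, then invoke the Representation Theorem) is precisely the content of Lemma~\ref{lem:DVR-isotropy}. So for cases (i) and (iii) you are doing exactly what the paper does, just spelling out the lemma rather than citing it.

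Two remarks. First, in (i) and (iii) you invoke Lemma~\ref{lem:large-nonsing-subspace} with $U$ and $V$ in the same roles as written, obtaining $\dim W\geq n-r-2s$ and viewing $W$ inside $U^\bot\cong\E^{n-r}_A$. The bounds in the proposition ($n\geq m_K+2r+s$ and $n>2P(K)r+s$) are the ones you get by \emph{swapping} the roles, i.e.\ working inside $V^\bot\cong\E^{n-s}_A$ at codimension $\leq 2r$; this is why the hypothesis $r\geq s$ appears in Proposition~\ref{prop:unit-in-intersection2} but not in Proposition~\ref{prop:unit-in-intersection}. Your assignment still proves the proposition (since $r\geq s$ makes your bounds at least as good), but it does not explain why the stated inequalities look the way they do. Likewise for (iv): to land on $n>P(K)r+s$ you should view $(U^\bot\cap V^\bot)_K$ as a codimension-$\leq r$ subspace of $V^\bot_K\cong\E^{n-s}_K$, not as a codimension-$\leq r+s$ subspace of $\E^n_K$.

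Second, the concern you flag in (ii) and (iv) is real: Lemma~\ref{lem:DVR-isotropy}(ii) is stated for non-singular modules, and when only $K$ (not $\k$) is formally real the $A$-module $U^\bot\cap V^\bot$ can be singular---e.g.\ already for $A=\Z_{(5)}$. The paper's proof sketch does not address this either. Note that part (i) of Lemma~\ref{lem:DVR-isotropy} does not use non-singularity, so one does obtain a primitive $(y,t)\in (U^\bot\cap V^\bot)\oplus A$ with $q(y)=t^2$; the remaining case $t\in\mathfrak m$ forces $\bar y$ to be a nonzero isotropic vector in $\E^n_\k$, which is excluded when $\k$ is formally real but not obviously when only $K$ is. Your caveat is therefore well placed; resolving it (or weakening the statement) is work that neither your outline nor the paper's one-line proof supplies.
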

\begin{proof}
  The proof is analogous to the proof of the previous proposition,
  using Lemma~\ref{lem:DVR-isotropy} to push unit vectors over $K$
  into unit vectors over $A$.
\end{proof}
\begin{thm}\label{thm:intersection-connect}
  Let $A$ be a valuation ring with $2\in A^\times$,
  and denote the residue and quotient fields of $A$
  by $\k$ and $K$, respectively.
  Let $\lbrack u_1,\dots ,u_r\rbrack$ and $\lbrack v_1,\dots ,v_s\rbrack$
  be possibly empty frames in $\E^n_A$ spanning
  quadratic submodules $U$ and $V$ respectively, and with $r\geq s\geq 0$.
  Then
  $$
  | X_l(U^\bot\cap V^\bot ) |\simeq\bigvee S^{l-1}
  $$
  if either
  \begin{enumerate}[label=(\roman*)]
  \item $A$ is henselian and $n\geq 2(r+l-1) + (s+l-1) + m_A$;
  \item $A$ is henselian and $n\geq 2P(\k )(r+l-1) + (s+l-1) + 1$;
  \item $A$ is henselian, $\k$ is formally real,
    and $n\geq (r+l-1) + (s+l-1) + m_A$;
  \item $A$ is henselian, $\k$ is formally real,
    and $n\geq P(\k )(r+l-1) + (s+l-1) + 1$;
  \item $n\geq 2(r+l-1) + (s+l-1) + m_K$;
  \item $n\geq 2P(K ) (r+l-1) + (s + l -1)+1$;
  \item $K$ is formally real and $n\geq (r+l-1) + (s+l-1) + m_K$; or
  \item $K$ is formally real and
    $n\geq P(K ) (r+l-1) + (s + l -1)+1$.
  \end{enumerate}
\end{thm}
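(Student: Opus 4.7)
The plan is to prove the theorem by induction on $l \geq 1$, generalizing the argument of Vogtmann~\cite{vogtmann82}. The base case $l = 1$ reduces to showing that $U^\bot \cap V^\bot$ contains a unit vector, since $|X_1(U^\bot \cap V^\bot)|$ is a discrete set of unit vectors and $0$-sphericity is equivalent to non-emptiness. Substituting $l = 1$ into each of (i)--(viii) and using $r \geq s$ where needed reproduces the corresponding hypothesis of Proposition~\ref{prop:unit-in-intersection} or Proposition~\ref{prop:unit-in-intersection2}. For instance, (i) becomes $n \geq 2r + s + m_A$, and combining with $r \geq s$ yields $n \geq m_A + r + 2s$, exactly condition (i) of Proposition~\ref{prop:unit-in-intersection}. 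The other seven base cases are analogous.

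For the inductive step ($l \geq 2$), the complex $|X_l(U^\bot \cap V^\bot)|$ is at most $(l-1)$-dimensional, so it suffices to show it is $(l-1)$-connected. The key observation is that for any simplex $\sigma = \{w_1, \ldots, w_k\}$ of $X_l$ with $1 \leq k \leq l$, its link is canonically isomorphic to $X_{l-k}((U + \langle w_1, \ldots, w_k \rangle)^\bot \cap V^\bot)$; the outer $U$-frame is extended to length $r + k$, and the inequality $r + k \geq s$ is preserved. Substituting $(r + k, s, l - k)$ into any of (i)--(viii) weakens the inequality by exactly $k$ (e.g., in (i) it becomes $n \geq 2(r + l - 1) + (s + l - k - 1) + m_A$), so the inductive hypothesis applies and each such link is $(l - k - 1)$-spherical.

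The remaining task --- passing from link sphericity to $(l-1)$-connectivity of $|X_l|$ --- is a PL-surgery argument of Vogtmann's type: given a simplicial map $f: S^{l-1} \to |X_l|$, one iteratively modifies $f$ by filling intermediate disks using the sphericity of links, eventually reducing to a map whose image lies in the contractible star of a single vertex, whereupon $f$ extends over $D^l$. The main technical obstacle will be organizing this surgery: one needs a Morse-type quantity on the triangulation of $S^{l-1}$ that strictly decreases under each reduction, and one must verify that the link sphericity available at each stage matches precisely the connectivity required for the corresponding filling. This closely parallels the combinatorial bookkeeping in Vogtmann's Lemma~1.6 from \cite{vogtmann82}.
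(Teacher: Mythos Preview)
Your overall plan --- induction on $l$, base case via Propositions~\ref{prop:unit-in-intersection} and~\ref{prop:unit-in-intersection2}, inductive step by a surgery/Morse argument --- matches the paper's. Two points need correction.

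First, a minor slip: to conclude $|X_l|\simeq\bigvee S^{l-1}$ you want $(l-2)$-connectedness, not $(l-1)$-connectedness (the latter would force contractibility), so the surgery should handle maps $S^k\to|X_l|$ with $k\le l-2$.

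The substantive gap is in \emph{which} link you control. You verify sphericity of the full link of $\sigma=\{w_1,\dots,w_k\}$ via parameters $(r+k,s,l-k)$, but full-link sphericity alone does not give connectivity --- any triangulated closed manifold with nontrivial $\pi_1$ is a counterexample. In Vogtmann's surgery, and equally in the paper's discrete-Morse repackaging (Lemma~\ref{lem:disc-morse-thy}), one first fixes a reference unit vector $u\in U^\bot\cap V^\bot$ and declares a frame ``bad'' when none of its entries is orthogonal to $u$. The link of a bad simplex that must actually be filled is not the full link but its intersection with $u^\bot$, namely
\[
X_{l-k}\bigl((U+\langle w_1,\dots,w_k\rangle)^\bot\cap(V+\langle u\rangle)^\bot\bigr).
\]
Because the $w_j$ are by construction \emph{not} orthogonal to $u$, the vector $u$ cannot be absorbed into the first frame and must extend the second. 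This is precisely why the theorem carries two frames rather than one; with only one frame the induction cannot even be formulated at this step. Consequently the inductive parameters you need are $(r+k,\,s+1,\,l-k)$, not $(r+k,\,s,\,l-k)$. Fortunately the required inequality $g(l,r,s)\ge g(l-k,r+k,s+1)$ still holds in all eight cases for $k\ge 1$ (the difference is $k-1$ in each case), and $r+k\ge s+1$ follows from $r\ge s$, so once this correction is made the argument does go through. The paper organizes the same mechanism via Lemma~\ref{lem:disc-morse-thy} rather than explicit PL-surgery on a map from a sphere, but that is a cosmetic difference.
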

\begin{cor}\label{cor:cnt-ranges}
  Let $A$ be a valuation ring with $2\in A^\times$,
  and denote the residue field of $A$ by $\k$.
  The Stiefel complex $X(n\langle 1\rangle )$ is
  \begin{enumerate}[label=(\roman*)]
  \item $\left(\frac{n-m_A-3} 3\right)$-connected if $m_A<\infty$;
    \item $\left(\frac{n-5-2P(\k)}{2P(\k)+1} \right)$-connected
      if $P(\k ) < \infty$;
  \item $\left(\frac{n-m_A-2} 2\right)$-connected
    if $m_A <\infty$; and
  \item $\left(\frac{n-4-P(\k )}{P(\k ) +1}\right)$-connected
    if $A$ is henselian and $P(\k )<\infty$.
  \end{enumerate}
  Further, if $K$ denotes the quotient field of $A$,
  then the Stiefel complex $X(n\langle 1\rangle )$ is
  \begin{enumerate}[label=(\roman*)]
    \setcounter{enumi}{4}
    \item $\left(\frac{n-m_K-3} 3\right)$-connected if $m_K<\infty$;
    \item $\left(\frac{n-5-2P(K)}{2P(K)+1} \right)$-connected
      if $P(K) < \infty$;
    \item $\left(\frac{n-m_A-2} 2\right)$-connected
      if $K$ is formally real and $m_K<\infty$; and
    \item $\left(\frac{n-4-P(\k )}{P(\k ) +1}\right)$-connected
      if $K$ is formally real and $P(K)<\infty$
  \end{enumerate}
\end{cor}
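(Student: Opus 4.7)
The plan is to derive the corollary from Theorem~\ref{thm:intersection-connect} by specializing that result to empty frames, i.e., by setting $r = s = 0$. Then $U = V = 0$, so $U^\bot \cap V^\bot = \E^n_A$, and the theorem asserts that $|X_l(n\langle 1\rangle)| \simeq \bigvee S^{l-1}$ whenever the corresponding numerical condition on $n$ is satisfied. With $r = s = 0$ each of the eight conditions reduces to a single linear inequality in $l$.

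Next I would translate this spherical statement about $X_l$ into a connectivity statement about the full Stiefel complex $X(n\langle 1\rangle)$. Recall from the opening of Section~\ref{sec:connect} the homotopy equivalence $|X_l(q)| \simeq |\sk_{l-1}X(q)|$, and that a wedge of $(l-1)$-spheres is $(l-2)$-connected. Since the inclusion of the $(l-1)$-skeleton of a CW complex induces an isomorphism on $\pi_i$ for $i < l-1$ and a surjection on $\pi_{l-1}$, the $(l-2)$-connectivity of $|\sk_{l-1}X(n\langle 1\rangle)|$ propagates to $|X(n\langle 1\rangle)|$. Setting $c = l-2$, i.e. $l = c+2$, we obtain $c$-connectivity of $X(n\langle 1\rangle)$ whenever the relevant clause of Theorem~\ref{thm:intersection-connect} holds for this value of $l$.

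Finally, the explicit ranges drop out by substituting $r = s = 0$ and $l = c+2$ into each clause of the theorem and solving for the largest admissible $c$. For example, clause (i) of the theorem becomes $n \geq 3(c+1) + m_A$, which rearranges to $c \leq (n - m_A - 3)/3$, matching clause (i) of the corollary. The remaining clauses, including those governed by $m_K$ or $P(K)$ in the second list, are handled identically; the eight-way case split in the corollary is exactly the eight-way case split of the theorem.

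The only subtlety is the shift from \textbf{spherical} to \textbf{connected}: a nontrivial wedge of $d$-spheres is $(d-1)$-connected, not $d$-connected, which forces the choice $l = c+2$ rather than $l = c+1$ and thereby produces the additive constants visible in the corollary's bounds. Apart from this bookkeeping the proof is a direct substitution, so I do not anticipate any real obstacle beyond verifying that each of the eight numerical inequalities has been solved correctly.
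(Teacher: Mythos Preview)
Your proposal is correct and is exactly the argument implicit in the paper, which states the corollary without proof as a direct consequence of Theorem~\ref{thm:intersection-connect}: specialize to $r=s=0$, use $|X_l(q)|\simeq|\sk_{l-1}X(q)|$, and read off $c$-connectivity from the $(l-2)$-connectedness of a wedge of $(l-1)$-spheres with $l=c+2$. (In fact your substitution yields slightly sharper constants in clauses (ii), (iv), (vi), (viii) than those printed in the corollary; the printed bounds are weaker, hence still true, and the discrepancy appears to be a typo in the paper rather than a flaw in your reasoning.)
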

The proof uses three lemmata from the homotopy theory of posets:
\begin{lem}[Discrete Morse theory]\label{lem:disc-morse-thy}
  Let $X$ be a poset with $X = X_0\cup L_1\cup\dots\cup L_n$
  as sets. Put $X_j = X_0\cup L_1\cup\dots\cup L_j$
  for each $1\leq j\leq n$. Assume there is $d\in\N$ such that
  \begin{enumerate}[label=(\roman*)]
  \item $|X_0|\simeq\bigvee S^d$.
  \item Distinct elements of $L_i$ are not comparable for each $i\geq 1$.
  \item For each $i\geq 1$ and $x\in L_i$,
    $$
    |\Lk x X\cap X_{i-1}|\simeq\bigvee S^{d-1}.
    $$
  \end{enumerate}
  Then $|X|\simeq\bigvee S^d$.
\end{lem}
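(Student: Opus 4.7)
The plan is to prove $|X_j| \simeq \bigvee S^d$ for every $0 \leq j \leq n$ by induction on $j$. The base case $j = 0$ is precisely hypothesis (i), so the real content lies in passing from $|X_{j-1}|$ to $|X_j|$ by adjoining the antichain $L_j$.

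The first step is a geometric description of the attachment. Because the elements of $L_j$ are pairwise incomparable by hypothesis (ii), every chain of $X_j$ contains at most one element of $L_j$; in particular $\Lk x X_j = \Lk x X \cap X_{j-1}$ for each $x \in L_j$, and the open stars of distinct elements of $L_j$ are disjoint in $|X_j|$. In terms of order complexes this means
$$
|X_j| \;=\; |X_{j-1}| \cup \bigcup_{x \in L_j} \bigl(\{x\} * |\Lk x X \cap X_{j-1}|\bigr),
$$
where each join is a cone with apex $x$ glued to $|X_{j-1}|$ along its base $|\Lk x X \cap X_{j-1}|$, and the various cones meet only inside $|X_{j-1}|$.

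The second step is to iterate the following attach-a-cone fact: if $Y$ is a CW complex with $Y \simeq \bigvee S^d$ and $A \subseteq Y$ is a subcomplex with $A \simeq \bigvee S^{d-1}$, then $Y \cup_A CA \simeq \bigvee S^d$. This follows from the cofiber sequence $A \hookrightarrow Y \to Y \cup_A CA \simeq Y/A$: the long exact sequence in homology shows $\widetilde{H}_*(Y/A)$ is free and concentrated in degree $d$; for $d \geq 2$, van Kampen gives that $Y/A$ remains simply connected, and Hurewicz together with Whitehead's theorem then produces a homotopy equivalence to an appropriate wedge of $d$-spheres, while the cases $d \leq 1$ can be checked by direct inspection. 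Applying this lemma once for each $x \in L_j$, with $A = |\Lk x X \cap X_{j-1}|$ viewed as a subcomplex of the current intermediate space and with $A \simeq \bigvee S^{d-1}$ supplied by hypothesis (iii), promotes $|X_{j-1}|$ to $|X_j|$ while preserving the wedge-of-$d$-spheres homotopy type, closing the induction.

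The main obstacle I expect is the bookkeeping needed to justify the iterative application, namely that at each intermediate stage the already-attached cones do not disturb the homotopy type of the subcomplex $A$ playing the role of the base of the next cone. The disjointness of open stars recorded above should make this routine, but the precise identification of what subcomplex of the intermediate space plays the role of $A$ at each step is the only delicate point in the argument.
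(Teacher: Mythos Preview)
Your approach is essentially the same as the paper's: reduce by induction to attaching a single antichain, then glue in the cones one vertex at a time. Two points are worth noting.

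First, you do not address the case where $L_j$ is infinite; ``applying this lemma once for each $x \in L_j$'' tacitly presumes a finite iteration, and the passage to a possibly uncountable colimit requires justification. The paper handles this cleanly with Zorn's lemma: it chooses a maximal $L' \subseteq L_j$ for which $|X_{j-1} \cup L'| \simeq \bigvee S^d$, and derives a contradiction from $L' \neq L_j$ by attaching one more cone.

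Second, your attach-a-cone lemma is correct but heavier than necessary. Since $\bigvee S^d$ is $(d-1)$-connected and $A \simeq \bigvee S^{d-1}$ has the homotopy type of a $(d-1)$-dimensional complex, the attaching map $A \hookrightarrow Y$ is already nullhomotopic; hence
\[
Y \cup_A CA \;\simeq\; Y \vee (CA \cup_A CA) \;\simeq\; Y \vee \Sigma A \;\simeq\; \bigvee S^d
\]
directly, with no need for the homology long exact sequence, van Kampen, or Whitehead. This simpler argument also dissolves your bookkeeping worry: the link of $x$ sits inside $|X_{j-1}|$ regardless of which cones have already been attached, and nullhomotopy in the enlarged space is immediate from $(d-1)$-connectedness.
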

\begin{proof}
  By induction, it suffices to consider the case $n=1$.
  Using Zorn's lemma, pick a maximal subset $L'\subseteq L_1$
  with the property that $|X_0\cup L'|\simeq\bigvee S^d$.
  We claim that $L' = L_1$.
  Otherwise pick $x\in L_1\setminus L'$.
  Then
  \begin{align*}
    |X_0\cup L'\cup\lbrace x\rbrace|
    &= |X_0\cup L'|\cup_{|\Lk x X\cap (X_0\cup L')|}|\St x X\cap (X_0\cup L')| \\
    &=|X_0\cup L'|\cup_{|\Lk x X\cap X_0|}|\St x {X_0\cup\lbrace x\rbrace}|
  \end{align*}
  using that $x$ is not comparable to any element of $L'$.
  Since $|X_0\cup L'|\simeq\bigvee S^d$,
  the attaching map
  $\bigvee S^{d-1}\simeq|\Lk x X\cap X_0|\hookrightarrow |X_0\cup L'|$
  is nulhomotopic.
  Hence
  \begin{align*}
    |X_0\cup L'\cup\lbrace x\rbrace|
    &\simeq\left( \bigvee S^d\right) \vee
      \left( \cone |\Lk x X\cap X_0|
      \cup_{|\Lk x X\cap X_0|} |\St x {X_0\cup\lbrace x\rbrace} |\right) \\
    &\simeq \left( \bigvee S^d\right) \vee \susp |\Lk x X\cap X_0| \\
    &\simeq \bigvee S^d,
  \end{align*}
  contradicting the maximality of $L'$.
\end{proof}
\begin{rmk}
  The statement also holds (with exactly the same proof)
  when we allow $\bigvee S^k$ to mean the
  empty wedge of $k$-spheres, i.e. the point $*$.
\end{rmk}
\begin{lem}[Poset deformation lemma]\label{lem:poset-deformation}
  Let $X$ be a poset.
  If $f\colon X\to X$ is a poset endomorphism with $f(x)\leq x$
  for each $x\in X$, then $|f|\colon |X|\to |\im f|$
  is a homotopy equivalence.
\end{lem}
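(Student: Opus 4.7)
The plan is to prove the statement via the standard ``poset homotopy'' technique, reducing everything to a single monotonicity observation about order-preserving maps.

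First I would establish the following auxiliary fact: if $P$ and $Q$ are posets and $g,g'\colon P\to Q$ are poset maps with $g(p)\leq g'(p)$ for every $p\in P$, then $|g|\simeq|g'|$ as maps $|P|\to|Q|$. To prove this, let $[1]=\lbrace 0<1\rbrace$ and give $P\times [1]$ the product partial order, and define $H\colon P\times [1]\to Q$ by $H(p,0)=g(p)$ and $H(p,1)=g'(p)$. Monotonicity of $g$ and $g'$ together with the hypothesis $g\leq g'$ ensures $H$ is order-preserving; for instance, whenever $(p,0)\leq (p',1)$ in $P\times [1]$, one has $g(p)\leq g(p')\leq g'(p')$. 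Upon taking order complexes and geometric realizations, $|P\times [1]|\cong |P|\times [0,1]$, and $|H|$ restricts to $|g|$ and $|g'|$ on the two ends, supplying the desired homotopy.

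Next I would factor $f$ as $X\xrightarrow{\bar f}\im f\xrightarrow{\iota}X$, where $\im f$ carries the induced partial order from $X$, $\iota$ is the inclusion, and $\bar f$ is $f$ regarded as taking values in $\im f$; the map $|f|\colon |X|\to |\im f|$ appearing in the statement is $|\bar f|$. Applying the auxiliary fact to the pair of maps $f,\ident_X\colon X\to X$ (which are comparable via the hypothesis $f(x)\leq x$) yields $|\iota\circ\bar f|=|f|\simeq\ident_{|X|}$. Similarly, for each $y\in\im f\subseteq X$ one has $\bar f(\iota(y))=f(y)\leq y$, so the auxiliary fact applied inside $\im f$ gives $|\bar f\circ\iota|\simeq\ident_{|\im f|}$. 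Hence $|\bar f|$ and $|\iota|$ are mutually inverse homotopy equivalences, proving the lemma.

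The only nontrivial point is the verification that $H$ is order-preserving on $P\times [1]$, which amounts to the chain of inequalities just noted. Otherwise the argument is entirely formal once the poset homotopy fact is in place, and I do not anticipate any real obstacle.
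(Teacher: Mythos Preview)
Your proof is correct and is the standard argument for this well-known fact (often attributed to Quillen). The paper does not actually supply a proof of this lemma; it is stated without proof, presumably as folklore. Your auxiliary fact is exactly the observation that a pointwise inequality between poset maps is a natural transformation between the corresponding functors, hence induces a homotopy on nerves; the factorization through $\im f$ and the two applications of this fact are the expected steps. The only place one might want a word of care is the identification $|P\times[1]|\cong|P|\times[0,1]$, which requires knowing that geometric realization of nerves commutes with the product by $\Delta^1$ (true since $\Delta^1$ is finite, or by working in compactly generated spaces), but this is routine.
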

\begin{lem}\label{lem:poset-join}
  Let $X$ be a poset.
  If $X = Y\sqcup Z$ and $y\leq z$ for each $y\in Y$, $z\in Z$,
  then $|X|\simeq |Y|*|Z|$.
\end{lem}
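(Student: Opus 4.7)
The plan is to observe that for any poset $P$, the space $|P|$ is the geometric realization of the order complex of $P$, whose simplices are the finite chains $x_0 < x_1 < \cdots < x_k$ of $P$. Under this identification, the conclusion $|X| \simeq |Y| * |Z|$ will follow from an isomorphism of simplicial complexes: I will exhibit a bijection between the chains of $X$ and pairs of chains (one in $Y$, one in $Z$), with face relations compatible with the simplicial structure of the join.

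First, I would use the hypothesis to decompose chains in $X$. Given any chain $x_0 < x_1 < \cdots < x_k$ in $X$, each $x_i$ lies in exactly one of $Y$ or $Z$. Because $Y\cap Z = \emptyset$ and $y\leq z$ for every $y\in Y$ and $z\in Z$, antisymmetry forces $y < z$ strictly, so the chain consists of an initial segment $x_0 < \cdots < x_j$ entirely in $Y$ followed by a terminal segment $x_{j+1} < \cdots < x_k$ entirely in $Z$ (either segment may be empty). Conversely, any chain in $Y$ concatenated with any chain in $Z$ is a chain in $X$ by the same hypothesis. This gives a bijection
\[
\{\text{chains in } X\} \;\longleftrightarrow\; \{\text{chains in } Y\} \times \{\text{chains in } Z\},
\]
where we allow the empty chain on either side.

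Next I would check that this bijection is simplicial in the sense of the join construction: the simplicial join $|Y| * |Z|$, viewed as an abstract simplicial complex, has as its simplices exactly the disjoint unions $\sigma \sqcup \tau$ with $\sigma$ a simplex of (the order complex of) $Y$ and $\tau$ a simplex of $Z$, and face relations given by taking subsets of $\sigma$ and $\tau$ independently. The bijection above sends faces to faces, since dropping vertices from a chain of $X$ corresponds to dropping vertices from the $Y$-part and the $Z$-part independently. Hence we obtain an isomorphism of abstract simplicial complexes between the order complex of $X$ and the join of the order complexes of $Y$ and $Z$, inducing a homeomorphism $|X| \cong |Y| * |Z|$ on geometric realizations.

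The argument is essentially bookkeeping; there is no real obstacle once one unwinds the definitions, and the only subtlety is to treat the empty chain correctly so that the cases where $Y$ or $Z$ contribute no vertices to a given simplex are handled uniformly.
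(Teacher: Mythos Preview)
Your argument is correct and in fact yields a homeomorphism rather than merely a homotopy equivalence; the paper states this lemma without proof, treating it as a standard fact about order complexes, so your write-up supplies exactly the expected verification.
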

\begin{proof}[Proof of Theorem~\ref{thm:intersection-connect}]
  In order to deal with the cases (i)-(viii) simultaneously,
  we let
  $$
  g\colon \N\times \Z_{\geq 0}\times \Z_{\geq 0}\to\N
  $$
  denote the function given
  by $g(l,r,s) = 2(r+l-1) + (s+l-1) + m_A$ in case (i),
  by $g(l,r,s) = 2P(\k )(r+l-1) + (s+l-1) + 1$ in case (ii),
  and so forth, and assume that $A$ satisfies the requirements of the
  corresponding case.

  The proof is by induction on $l$.
  For $l=1$, Proposition~\ref{prop:unit-in-intersection}
  or~\ref{prop:unit-in-intersection2} implies that if
  $n\geq g(1,r,s)$, then
  $X_1(U^\bot\cap V^\bot)$ contains a unit vector $u$.
  But then $u\neq -u\in X_1(U^\bot\cap V^\bot)$ also, so
  the discrete poset $X_1(U^\bot\cap V^\bot)$ has at least two elements, 
  and hence $|X_1(U^\bot\cap V^\bot) |\simeq\bigvee S^0$ as desired.

  Assume now that $l\geq 2$ and $n\geq g(l,r,s)$.
  Write $X = X_l(U^\bot\cap V^\bot )$ for convenience.
  Since $g$ is monotonic in each variable, we have in particular
  $$
  g(l,r,s)\geq g(1,r,s),
  $$
  so as above we find by Proposition~\ref{prop:unit-in-intersection}
  or~\ref{prop:unit-in-intersection2}
  that we can pick a unit vector $u\in U^\bot\cap V^\bot$.
  Set $W = U^\bot\cap V^\bot\cap u^\bot$. Since
  $$
  g(l,r,s)\geq g(l-1,r,s+1),
  $$
  the induction hypothesis gives
  $|X_{l-1}(W)|\simeq\bigvee S^{l-2}$.

  Write the poset $X$ as a union
  $$
  X = X_0\cup L_1\cup\dots\cup L_l
  $$
  of sets, where
  \begin{align*}
    X_0 &= \lbrace \lbrack\pm u\rbrack \rbrace
          \cup
          \lbrace\lbrack v_1,\dots ,v_i\rbrack \mid \exists\, t \geq 1,\,
          \lbrack v_1,\dots ,v_t\rbrack\in X_{l-1}(W)\,\wedge \,
          B_q(v_j,u)\neq 0\,\,\text{for}\,\, t < j\leq i\rbrace,\\
    L_1 &= \lbrace \lbrack v_1,\dots ,v_l\rbrack\in X\mid v_1,\dots ,v_l\in U^\bot\cap V^\bot\cap u^\bot\rbrace,\quad\text{and} \\
    L_i &= \lbrace\lbrack v_1,\dots ,v_i\rbrack\mid B_q(v_j,u)\neq 0
          \,\,\text{for all}\,\, j\rbrace,\quad 2\leq i\leq l.
  \end{align*}
  We verify the conditions of the discrete Morse theory lemma
  with $d=l-1$.
  For each $i\geq 1$, all frames in $L_i$ have the same length
  and thus no distinct elements of $L_i$ are comparable.
  It remains to check conditions (i) and (iii) of the lemma.
  \begin{claim}
    $|X_0|\simeq\bigvee S^d$.
  \end{claim}
  \begin{claimproof}
    Put
    \begin{align*}
      X_0' = \lbrace \lbrack \pm u\rbrack\rbrace
      \cup \lbrace \lbrack \pm u,v_1,\dots ,v_i\rbrack\in X\rbrace
      \cup \lbrace \lbrack v_1,\dots ,v_i\rbrack\in X\mid B_q(v_j,u)=0\,\,\text{for all}\,\, j\rbrace.
    \end{align*}
    Then $|X_0'| = \susp{|X_{l-1}(W)|}\simeq\bigvee S^{l-1}$.
    We define a poset map $f\colon X_0\to X_0$ which deforms $X_0$
    onto $X_0'$ as in the poset deformation lemma.
    Set $f$ to be the identity on $X_0'$.
    Suppose $\lbrack v_1,\dots ,v_i\rbrack\in X$
    and $t$
    are such that $\lbrack v_1,\dots ,v_t\rbrack\in X_{l-1}(W)$
    and $B_q(v_j,u)\neq 0$ for $t <j\leq i$.
    Then we define $f(\lbrack v_1,\dots ,v_i\rbrack ) = \lbrack v_1,\dots ,v_t\rbrack$
    and
    $f(\lbrack\pm u, v_1,\dots ,v_i\rbrack ) = \lbrack \pm u, v_1,\dots ,v_t\rbrack$,
    and $f$ is as desired.
  \end{claimproof}
  As in the Morse theory lemma,
  we put $X_j = X_0\cup L_1\cup\dots\cup L_j$
  for each $1\leq j\leq l$.
  We must check:
  \begin{claim}
    $|\Lk{\lbrack v_1,\dots ,v_i\rbrack} X\cap X_{i-1}|\simeq\bigvee S^{l-2}$
    for all $i\geq 1$
    and $\lbrack v_1,\dots ,v_i\rbrack\in L_i$.
  \end{claim}
  \begin{claimproof}
    Suppose first that $\lbrack v_1,\dots ,v_l\rbrack\in L_1$.
    Then
    $$
    \Lk{\lbrack v_1,\dots ,v_l\rbrack}{X}\cap X_0
    = \Lk{\lbrack v_1,\dots ,v_l\rbrack}{X}
    =\lbrace
    \text{proper subframes of}\,\, \lbrack v_1,\dots ,v_l\rbrack
    \rbrace.
    $$
    This can be identified with the barycentric subdivision of the
    boundary of an $(l-1)$-simplex, so
    $|\Lk{\lbrack v_1,\dots ,v_l\rbrack}{X}\cap X_0|\simeq\bigvee S^{l-2}$
    as desired.

    Now suppose that $\lbrack v_1,\dots ,v_i\rbrack\in L_i$ for some
    $i\geq 2$.
    Then
    \begin{align*}
      &\Lk{\lbrack v_1,\dots ,v_i\rbrack} X\cap X_{i-1} \\
      &\quad =
        \lbrace\text{proper subframes of}\,\, \lbrack v_1,\dots ,v_i\rbrack\rbrace \\
      &\qquad
        \cup
        \lbrace\lbrack w_1,\dots ,w_t ,v_1,\dots ,v_i\rbrack
        \mid 1\leq t\leq l-i,\, \lbrack w_1,\dots ,w_t\rbrack\in X_{l-1}(W)\rbrace \\
      &\quad =
        \lbrace\text{proper subframes of}\,\, \lbrack v_1,\dots ,v_i\rbrack\rbrace \\
      &\qquad
        \cup
        \lbrace\lbrack w_1,\dots ,w_t ,v_1,\dots ,v_i\rbrack
        \mid \lbrack w_1,\dots ,w_t\rbrack
        \in X_{l-i}(W\cap\lbrack v_1,\dots ,v_i\rbrack^\bot)\rbrace.
    \end{align*}
    Thus Lemma~\ref{lem:poset-join} implies that
    \begin{align*}
      &|\Lk{\lbrack v_1,\dots ,v_i\rbrack} X\cap X_{i-1}| \\
      &\quad\simeq
        |\lbrace\text{proper subframes of}\,\, \lbrack v_1,\dots ,v_i\rbrack\rbrace |\\
      &\qquad
        *
        |\lbrace\lbrack w_1,\dots ,w_t ,v_1,\dots ,v_i\rbrack
        \mid \lbrack w_1,\dots ,w_t\rbrack
        \in X_{l-i}(W\cap\lbrack v_1,\dots ,v_i\rbrack^\bot )\rbrace | \\
      &\quad\simeq S^{i-2}* |X_{l-i}(W\cap\lbrack v_1,\dots ,v_i\rbrack^\bot )| \\
      &\quad\simeq S^{i-2}*S^{l-i-1}\simeq S^{l-2},
    \end{align*}
    where we have used that
    $$
    g(l,r,s)\geq g(l-i,r,s+i),
    $$
    so the induction hypothesis applies to
    $X_{l-i}(W\cap\lbrack v_1,\dots ,v_i\rbrack^\bot )$.
  \end{claimproof}
  It follows from the discrete Morse theory lemma that $|X|$ is homotopy
  equivalent to a wedge of $(l-1)$-spheres, thus completing the induction.
\end{proof}

\section{Homology stability of $O_n(A)$}
\label{sec:hom-stab}
For a pair of objects $(A,X)$ in a \emph{homogeneous}
monoidal category $(\mathcal C,\oplus ,0)$,
the main result of~\cite{RWW17} says that 
the sequence of groups
$$
\text{Aut}_{\mathcal C} (A)
\xrightarrow{{-}\oplus X}
\text{Aut}_{\mathcal C} (A\oplus X)
\xrightarrow{{-}\oplus X}
\dots
\xrightarrow{{-}\oplus X}
\text{Aut}_{\mathcal C} (A\oplus X^{\oplus n})
\xrightarrow{{-}\oplus X}
\dots
$$
satisfies homology stability if
a certain associated semi-simplicial set
$W_n(A,X)_\bullet$ is sufficiently connected for large $n$.
In this section, we derive homology stability of the Euclidean orthogonal groups
$O_n(A)$ for some local rings by applying this framework to the pair
$(0,\E^1_A)$ in the category $\Quad A$ of non-singular
quadratic modules over $A$.

\subsection{Homogeneity of $\Quad R$}
In order to apply this framework, we first show that the category $\Quad A$
is homogeneous. Recall that for a monoidal category
$(\mathcal C ,\oplus ,0)$ in which the monoidal unit $0$ is initial,
this means that for all objects $A,B\in\mathcal C$:
\begin{itemize}
\item[\textbf{H1}] The action of $\text{Aut}_{\mathcal C}(B)$
  on the set $\text{Hom}_{\mathcal C}(A,B)$ by postcomposition
  is transitive.
\item[\textbf{H2}] The group homomorphism
  $$
  \begin{tikzcd}[row sep = tiny]
    \text{Aut}_{\mathcal C}(A)\arrow[r]
    & \text{Aut}_{\mathcal C}(A\oplus B)\\
    f\arrow[r,mapsto]
    & f\oplus B
  \end{tikzcd}
  $$
  is injective, 
  with image
  $\text{Fix}(0\oplus B) = \lbrace\varphi\in \text{Aut}_{\mathcal C}(A\oplus B)\mid \varphi\circ \iota_B = \iota_B\rbrace$,
  where $\iota_B$ is the canonical morphism $B\cong 0\oplus B\xrightarrow{0_A\oplus B} A\oplus B$.
\end{itemize}
Before proving homogeneity, we make the basic observation:
\begin{prop}\label{prop:preserve-bilinear}
  Let $R$ be a commutative ring
  and let $f\colon (V,q_V)\to (U,q_U)$ be a map of quadratic modules
  over $R$.
  Then for each $x,y\in V$, $B_{q_U}(f(x),f(y)) = B_{q_V}(x,y)$.
\end{prop}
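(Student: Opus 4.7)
The plan is to prove this by direct computation using the defining formula for the associated bilinear form, namely $B_q(x,y) = q(x+y) - q(x) - q(y)$, together with the two properties that characterize a morphism in $\Quad R$: $R$-linearity of $f$ and preservation of the quadratic form ($q_U \circ f = q_V$).

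Concretely, I would fix $x,y \in V$ and compute
\[
B_{q_U}(f(x),f(y)) = q_U(f(x)+f(y)) - q_U(f(x)) - q_U(f(y)).
\]
Then I would apply $R$-linearity of $f$ to rewrite $f(x)+f(y) = f(x+y)$, so that the right-hand side becomes $q_U(f(x+y)) - q_U(f(x)) - q_U(f(y))$. Invoking the form-preservation property $q_U(f(v)) = q_V(v)$ for $v = x+y, x, y$ collapses this expression to $q_V(x+y) - q_V(x) - q_V(y) = B_{q_V}(x,y)$.

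There is no genuine obstacle here; the statement is a formal consequence of the definitions of a quadratic module (where $B_q$ is explicitly defined as the above polarization) and of a morphism of quadratic modules (which is required to be linear and form-preserving, but not a priori assumed to preserve the bilinear form). The content of the proposition is merely the observation that, once $q$ is known, $B_q$ is determined by $q$ via polarization, so preservation of $q$ automatically yields preservation of $B_q$. Accordingly the proof will be three lines of algebraic manipulation with no case analysis or auxiliary lemmata needed.
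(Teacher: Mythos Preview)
Your proposal is correct and matches the paper's proof exactly: the paper also computes $B_{q_U}(f(x),f(y))$ via the polarization formula, uses linearity of $f$ and form-preservation to reduce to $q_V(x+y)-q_V(x)-q_V(y)=B_{q_V}(x,y)$.
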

\begin{proof}
  By definition
  \begin{align*}
    B_{q_U}(f(x),f(y)) &= q_U(f(x) + f(y)) - q_U(f(x)) - q_U(f(y)) \\
                       &= q_V(x + y) - q_V(x) - q_V(y) \\
                       &= B_{q_V}(x,y).
  \end{align*}
\end{proof}
\begin{cor}\label{cor:injective}
  Let $R$ be a commutative ring
  and let $f\colon (V,q_V)\to (U,q_U)$ be a map of quadratic modules
  over $R$.
  If the quadratic module $(V,q_V)$ is non-singular, then $f$ is injective.
\end{cor}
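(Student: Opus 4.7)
The plan is to prove injectivity by exhibiting that the kernel of $f$ lies in the kernel of the tensor-hom adjoint $d_{q_V}\colon V\to V^*$, which is zero by the non-singularity hypothesis.

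More concretely, I would first take an arbitrary $x\in V$ with $f(x)=0$ and show that $B_{q_V}(x,y)=0$ for every $y\in V$. This is immediate from the previous proposition: for any $y\in V$,
\[
B_{q_V}(x,y)=B_{q_U}(f(x),f(y))=B_{q_U}(0,f(y))=0,
\]
using bilinearity of $B_{q_U}$ in the last step.

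Next, I would translate this into the statement that $d_{q_V}(x)=B_{q_V}(x,-)=0\in V^*$. Since $(V,q_V)$ is non-singular, $d_{q_V}$ is by definition an isomorphism, so $x=0$. This proves $\Ker f=0$, i.e. $f$ is injective.

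There is essentially no obstacle here; the only subtlety is recognizing that the condition $B_{q_V}(x,y)=0$ for all $y\in V$ is exactly the vanishing of $d_{q_V}(x)$, and that non-singularity was formulated in precisely this adjoint language earlier in Section~\ref{subsec:quad-basic}. No additional hypotheses on $R$ or on $(U,q_U)$ are needed.
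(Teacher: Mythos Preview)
Your proof is correct and is essentially the same as the paper's, just phrased contrapositively: the paper starts from $x\neq 0$, uses injectivity of $d_{q_V}$ to find $y$ with $B_{q_V}(x,y)\neq 0$, and then applies Proposition~\ref{prop:preserve-bilinear} to conclude $f(x)\neq 0$, whereas you start from $f(x)=0$ and run the same two ingredients in reverse.
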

\begin{proof}
  Let $0\neq x\in V$.
  By assumption, the duality map $d_{q_V}\colon V\to V^*$ is an
  isomorphism; in particular, it is injective.
  Hence $B_{q_V}(x,{-})\neq 0$, i.e. there is $y\in V$ with
  $B_{q_V}(x,y)\neq 0$.
  By the lemma, we then have $B_{q_U}(f(x),f(y)) = B_{q_V}(x,y)\neq 0$,
  so $f(x)\neq 0$ as desired.
\end{proof}
\begin{thm}
  Let $R$ be a semi-local ring with $2\in R^\times$.
  Then the symmetric monoidal category $(\Quad R ,\orthosum ,0)$
  of non-singular quadratic modules is homogenenous.
\end{thm}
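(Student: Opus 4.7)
The plan is to verify the two homogeneity axioms \textbf{H1} (transitivity of $\Aut(B)$ on $\Hom(A,B)$) and \textbf{H2} (identification of the stabilizer) directly, leveraging the structural results assembled in Sections~\ref{subsec:quad-basic} and~\ref{subsec:monoid}, chiefly Corollary~\ref{cor:injective}, Proposition~\ref{prop:non-sing-summand}, and Roy's Cancellation Theorem.

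For \textbf{H1}, fix non-singular quadratic modules $A,B\in\Quad R$ and two morphisms $f,g\colon A\to B$. By Corollary~\ref{cor:injective} both $f$ and $g$ are injective, and since $A$ is non-singular their images $f(A)$ and $g(A)$ are non-singular quadratic submodules of $B$; by Proposition~\ref{prop:non-sing-summand} they are orthogonal direct summands, so
\[
B \;\cong\; f(A)\orthosum f(A)^\bot \;\cong\; g(A)\orthosum g(A)^\bot.
\]
Since $f$ and $g$ are form-preserving, both $f(A)$ and $g(A)$ are isometric to $A$. Applying Roy's Cancellation Theorem to the two orthogonal decompositions of $B$ yields an isometry $\psi\colon f(A)^\bot\to g(A)^\bot$. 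Define $\varphi\in\Aut(B)$ by $\varphi|_{f(A)} = g\circ f^{-1}$ and $\varphi|_{f(A)^\bot} = \psi$; this is form-preserving on each summand, and hence is an isometry because the decomposition is orthogonal. By construction $\varphi\circ f = g$.

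For \textbf{H2}, the map $f\mapsto f\orthosum\ident_B$ is obviously a group homomorphism, and it is injective since restriction to $A\orthosum 0$ recovers $f$. Any element in the image clearly fixes the canonical inclusion $\iota_B$, so it suffices to show the containment $\mathrm{Fix}(\iota_B)\subseteq\Ker$-image. Given $\varphi\in\Aut(A\orthosum B)$ with $\varphi\circ\iota_B=\iota_B$, the map $\varphi$ acts as the identity on the subspace $B\subseteq A\orthosum B$. Since $\varphi$ preserves $B_q$, its restriction to $B^\bot$ lands in $B^\bot$. A short calculation using non-singularity of $B$ shows that $B^\bot = A$ inside $A\orthosum B$: the inclusion $A\subseteq B^\bot$ is clear, and if $a+b\in B^\bot$ then $b\in B\cap B^\bot = 0$ by non-singularity of $B$. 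Hence $\varphi$ restricts to an automorphism $f\in\Aut(A)$, and $\varphi = f\orthosum\ident_B$.

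The crux of the argument is \textbf{H1}, where cancellation is doing the real work: one needs to promote an abstract isometry of orthogonal complements into a global automorphism of $B$. Over a field this is standard, but over the semi-local ring $R$ the availability of Roy's theorem is essential, and the hypothesis $2\in R^\times$ enters both via the diagonalization machinery behind cancellation and via the identification $B^\bot = A$ used in \textbf{H2}. The verification of \textbf{H2} is otherwise essentially formal.
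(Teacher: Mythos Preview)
Your proof is correct and follows essentially the same approach as the paper: both arguments establish \textbf{H1} by combining injectivity (Corollary~\ref{cor:injective}), the orthogonal splitting (Proposition~\ref{prop:non-sing-summand}), and Roy's Cancellation Theorem to build a global isometry from $g\circ f^{-1}$ on the image together with an isometry of orthogonal complements, and both handle \textbf{H2} by observing that an automorphism fixing $B$ pointwise must preserve $B^\bot = A$. Your verification of \textbf{H2} is in fact slightly more explicit than the paper's; the only inaccuracy is in your closing commentary---neither the Cancellation Theorem as stated nor the identification $B^\bot = A$ actually requires $2\in R^\times$.
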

\begin{proof}
  First observe that the trivial quadratic module $0$ is initial
  in $\Quad R$.
  Condition H2 is clear from the definition of $\orthosum$;
  indeed, given non-singular quadratic modules $(V,q_V)$ and $(W,q_W)$,
  $\text{Fix}(0\orthosum W)$ is just the set of automorphisms
  $\varphi\in O(V\orthosum W)$ which fix $0\orthosum W$ pointwise.
  If $\varphi$ is such a morphism, we must then have
  $\varphi (V\orthosum 0) = (0\orthosum W)^\bot = V\orthosum 0$;
  i.e. $\varphi$ must map the $V$-summand to itself,
  and letting $\varphi_V$ be the automorphism of $V$ thus defined,
  we find that $\varphi$ is the image of $\varphi_V$ under the map
  $O(V)\to O(V\orthosum W)$.
  
  To verify condition H1, let $(V,q_V)$ and $(W,q_W)$ be non-singular
  quadratic modules, and let $f,g\colon (V,q_V)\to (W,q_W)$
  be two maps between them.
  We note that $f$ and $g$ map $(V,q_V)$ isometrically onto
  non-singular quadratic submodules $f(V)$ and $g(V)\subseteq W$ respectively
  (cf. Proposition~\ref{prop:non-sing-summand}).
  For this, it suffices to note that $f$ and $g$
  are injective by Corollary~\ref{cor:injective}.

  Hence Proposition~\ref{prop:non-sing-summand} gives that
  $$
  (W,q_W) = (f(V),q_W)\orthosum (f(V)^\bot,q_W)
  \cong (V,q_W)\orthosum (f(V)^\bot,q_W)
  $$
  and similarly for $g(V)$,
  so
  $$
  (V,q_W)\orthosum (f(V)^\bot,q_W)
  \cong
  (V,q_W)\orthosum (g(V)^\bot,q_W).
  $$
  By the cancellation theorem, we thus have an isometry
  $h\colon (f(V)^\bot,q_W)\to (g(V)^\bot,q_W)$.
  But then $f = (fg^{-1}\orthosum h)\circ g$
  and $fg^{-1}\orthosum h\in O(q_W)$ as desired.
\end{proof}
\subsection{The space of destabilizations}
In this subsection, we identify the space of destabilizations
$|W_n(0,\E^1_A )_\bullet |$. 
First recall the following constructions~\cite{RWW17}:
\begin{defn}
  Let $(\mathcal C ,\oplus ,0)$ be a monoidal category
  in which the unit object $0$ is initial.

  Let $A$ and $X$ be objects in $\mathcal C$.
  For each $n\geq 1$, define
  \begin{enumerate}[label=(\roman*)]
  \item a semi-simplicial set
    $W_n(A,X)_\bullet$ having
    $$
    W_n(A,X)_p  = \text{Hom}_{\mathcal C}(X^{\oplus (p+1)},A\oplus X^{\oplus n})
    $$
    for each $p\geq 0$,
    with face maps
    $$
    \begin{tikzcd}[row sep =tiny]
      \text{Hom}_{\mathcal C}(X^{\oplus (p+1)},A\oplus X^{\oplus n})
      \arrow[r,"d_i"]
      & \text{Hom}_{\mathcal C}(X^{\oplus p},A\oplus X^{\oplus n}) \\
      f \arrow[r,mapsto]
      & f\circ (X^{\oplus i}\oplus (0\to X)\oplus X^{\oplus p-i})
    \end{tikzcd}
    $$
    for each $0\leq i\leq p$; and
  \item a simplicial complex $S_n(A,X)$ with set of vertices $W_n(A,X)_0$,
    and such that vertices $f_0,\dots ,f_p\in W_n(A,X)_0$ spans
    a simplex of $S_n(A,X)$ if and only if there is a simplex $\sigma\in W_n(A,X)_p$
    whose vertices are exactly $\lbrace f_0,\dots ,f_p\rbrace$.
  \end{enumerate}
\end{defn}
Intuitively, $|W_n(A,X)_\bullet |$ parametrizes
ways of cutting out copies of sums of $X$ from $A\oplus X^{\oplus n}$,
which is why~\cite{RWW17} refer to it as the $n$-th 
\emph{space of destabilizations} of $A$ by $X$.

We now identify the objects $W_n(0, \E^1_A )_\bullet$
and $S_n(0,\E^1_A )$.
\begin{defn}
  Let $(V,q)$ be a quadratic space over a commutative ring $R$.
  The \emph{ordered Stiefel space}
  $\vec X(q)_\bullet$ is the semi-simplicial set having
  $$
  \vec X (q)_p
  =
  \lbrace
  (v_0,\dots ,v_p)\in V^{p+1}\mid
  q(v_i)=1\text{ for each } i
  \text{ and }
  B_q(v_i,v_j) = 0 \text{ for } i\neq j
  \rbrace
  $$
  for each $p\geq 0$,
  with face maps
  $$
  \begin{tikzcd}[row sep =tiny]
    \vec X (q)_p
    \arrow[r,"d_i"]
    & \vec X (q)_{p-1} \\
    (v_0,\dots ,v_p) \arrow[r,mapsto]
    & (v_0,\dots ,\widehat{v_i},\dots ,v_p)
  \end{tikzcd}
  $$
  for each $0\leq i\leq p$.

  An element $(v_0,\dots ,v_p)\in \vec X(q)_p$ is referred to
  as an \emph{ordered frame} in $(V,q)$.
\end{defn}
\begin{prop}
  Let $R$ be a commutative ring
  and consider the symmetric monoidal category
  $(\Quad R ,\orthosum ,0)$.
  For any object $(V,q)\in \Quad R$ and each $n\geq 1$,
  there is an isomorphism of semi-simplicial sets
  $$
  W_n((V,q),\E^1_R)_\bullet\cong \vec X((V,q)\orthosum \E^n_R)_\bullet
  $$
  given on $p$-simplices by
  \begin{equation}
    \label{eq:destab-ident}
    f\mapsto (f(e_1),\dots ,f(e_{p+1}))
  \end{equation}
  for $0\leq i\leq p$,
  where $e_i$ denotes the unit vector in $\E^{p+1}_R$
  having $i$-th coordinate equal to $1$ and all other coordinates zero.
\end{prop}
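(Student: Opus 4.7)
The plan is to check that the assignment $f \mapsto (f(e_1),\dots,f(e_{p+1}))$ is a levelwise bijection which commutes with face maps. Since $\E^{p+1}_R$ is free on the orthonormal basis $e_1,\dots,e_{p+1}$, any $R$-linear map $f\colon \E^{p+1}_R \to (V,q)\orthosum \E^n_R$ is uniquely determined by its values on this basis, giving injectivity of the proposed map. To check that the image lies in $\vec X((V,q)\orthosum \E^n_R)_p$, note that if $f$ is form-preserving, then the form of the target evaluates to $1$ on each $f(e_i)$ (since $q(e_i)=1$), and by Proposition~\ref{prop:preserve-bilinear} we have $B_{q_W}(f(e_i),f(e_j)) = B_q(e_i,e_j) = 0$ for $i\neq j$, so the tuple $(f(e_1),\dots,f(e_{p+1}))$ is indeed an ordered frame.

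For surjectivity, I would start from a given ordered frame $(v_1,\dots,v_{p+1})$ in $(V,q)\orthosum\E^n_R$ and define $f$ on the basis by $f(e_i) = v_i$, extended $R$-linearly. The key verification is that $f$ is form-preserving. Writing an arbitrary element of $\E^{p+1}_R$ as $x = \sum_i a_i e_i$, one expands
$$
q_W\bigl(f(x)\bigr) = q_W\Bigl(\sum_i a_i v_i\Bigr) = \sum_i a_i^2 q_W(v_i) + \sum_{i<j} a_i a_j B_{q_W}(v_i,v_j) = \sum_i a_i^2,
$$
which matches $q(x) = \sum_i a_i^2$ because the frame relations force the cross-terms to vanish and each diagonal term to be $1$. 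Thus the two maps are mutually inverse.

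Finally, I would verify semi-simpliciality. The face map $d_i$ on $W_n((V,q),\E^1_R)_\bullet$ precomposes $f$ with the map $\E^p_R \hookrightarrow \E^{p+1}_R$ that identifies the $j$-th copy of $\E^1_R$ on the source with the $j$-th copy on the target for $j \le i$ and with the $(j+1)$-st copy for $j > i$; this is exactly the map which on standard basis vectors hits all of $e_1,\dots,e_{p+1}$ except $e_{i+1}$ (or $e_i$, up to the indexing convention chosen). Under the proposed bijection this translates to deleting the corresponding entry from the tuple $(f(e_1),\dots,f(e_{p+1}))$, which is precisely the face map of $\vec X((V,q)\orthosum\E^n_R)_\bullet$. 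The only real subtlety is bookkeeping with indices, and the main substantive step is the form-preservation check in the surjectivity argument, both of which are routine once unwound.
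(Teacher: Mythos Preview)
Your argument is correct and follows essentially the same route as the paper: both verify well-definedness via Proposition~\ref{prop:preserve-bilinear}, construct the inverse by sending an ordered frame to the $R$-linear map determined on the standard basis, and check form-preservation by the same expansion of $q_W(\sum_i a_i v_i)$. Your proposal is in fact slightly more thorough than the paper's, since you explicitly verify compatibility with face maps, whereas the paper leaves this implicit.
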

\begin{proof}
  Note that $(f(e_1),\dots ,f(e_{p+1}))$ is indeed an ordered frame
  by Proposition~\ref{prop:preserve-bilinear},
  so the map is well-defined.
  To see that it is an isomorphism of semi-simplicial sets,
  note that for each $p$, the map~\eqref{eq:destab-ident} has an inverse
  $\vec X((V,q)\orthosum \E^n_A)_p \to W_n((V,q),\E^1_R)_p$
  given by sending $(v_0,\dots ,v_p)$ to the uniquely-determined $R$-linear
  map $f\colon R^{p+1}\to V\oplus R^n$ which maps $e_i$ to $v_{i-1}$.
  This map is form-preserving, since for each $\sum_{i=1}^{p+1}r_ie_i$
  we have
  \begin{align*}
    (q\orthosum n\langle 1\rangle )
    \left(
    f\left( \sum_{i=1}^{p+1}r_ie_i\right)
    \right)
    &= (q\orthosum n\langle 1\rangle )
      \left(
      \sum_{i=1}^{p+1}r_iv_{i-1}
      \right) \\
    &= \sum_{i=1}^{p+1}(q\orthosum n\langle 1\rangle )(r_iv_{i-1})
      + \sum_{1\leq i < j\leq p+1}
      r_ir_jB_{q\orthosum n\langle 1\rangle }(v_{i-1},v_{j-1})\\
    &= r_1^2 + \dots + r_{p+1}^2 = (p+1)\langle 1\rangle
      \left(\sum_{i=1}^{p+1}r_ie_i\right).
  \end{align*}
\end{proof}
\begin{cor}
  There is an isomorphism of simplicial complexes
  $$
  S_n((V,q),\E^1_R)\cong X(q\orthosum n\langle 1\rangle )
  $$
  given by $f\mapsto f(e_1) = f(1)$.
\end{cor}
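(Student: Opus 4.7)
The plan is to reduce the corollary to the proposition immediately preceding it, which already identifies the semi-simplicial set $W_n((V,q),\E^1_R)_\bullet$ with the ordered Stiefel space $\vec X((V,q)\orthosum\E^n_R)_\bullet$ via $f\mapsto (f(e_1),\dots ,f(e_{p+1}))$. All that remains is to pass from ordered frames to unordered ones and check that this recovers the simplicial structure of $X(q\orthosum n\langle 1\rangle )$.

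First I would handle the vertex sets. Specializing the previous proposition to $p=0$ identifies $W_n((V,q),\E^1_R)_0$ with $\vec X((V,q)\orthosum\E^n_R)_0$, which by definition is exactly the set of unit vectors of $(V,q)\orthosum\E^n_R$, i.e. the vertex set of the Stiefel complex $X(q\orthosum n\langle 1\rangle )$. The map of the corollary, $f\mapsto f(e_1) = f(1)$, is precisely this identification, so I would just note that the stated assignment is well-defined and bijective on vertices.

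Next I would verify that the simplicial structures match. By definition, vertices $f_0,\dots ,f_p$ span a simplex of $S_n((V,q),\E^1_R)$ if and only if there exists a $p$-simplex $\sigma\in W_n((V,q),\E^1_R)_p$ with vertex set $\lbrace f_0,\dots ,f_p\rbrace$. Transporting this along the isomorphism of the previous proposition, this is equivalent to asking for some ordering $(v_{\pi (0)},\dots ,v_{\pi (p)})$ of the unit vectors $v_i := f_i(e_1)$ to constitute an ordered frame in $(V,q)\orthosum\E^n_R$. The ordered frame conditions — namely $q(v_i)=1$ for all $i$ and $B_q(v_i,v_j)=0$ for $i\neq j$ — are manifestly symmetric in the $v_i$, so the existence of such an ordering is equivalent to the $v_i$ being pairwise orthogonal unit vectors. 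This is precisely the condition for $\lbrace v_0,\dots ,v_p\rbrace$ to span a simplex of $X(q\orthosum n\langle 1\rangle )$.

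There is no real obstacle here; the main thing to be careful about is bookkeeping to make sure the vertex-set-of-a-simplex convention in the definition of $S_n$ correctly matches the vertex-set-of-a-frame notion in $X$, which is where the symmetry of the frame conditions does the work. Putting the two steps together yields the claimed simplicial isomorphism $S_n((V,q),\E^1_R)\cong X(q\orthosum n\langle 1\rangle )$ via $f\mapsto f(1)$.
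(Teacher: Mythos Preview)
Your proposal is correct and is precisely the immediate deduction the paper intends: the corollary is stated without proof because it follows at once from the preceding proposition by specializing to $p=0$ for vertices and then using the evident symmetry of the orthonormality conditions to pass from ordered to unordered frames. Your bookkeeping is accurate, and no additional ideas are needed.
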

\subsection{Homology stability}
As mentioned at the beginning of this section, the main theorem
of Randal-Williams--Wahl~\cite{RWW17} requires knowing that the
space of destabilizations
$|W_n(A,X)_\bullet |$ is highly-connected.
Under certain conditions, Randal-Williams and Wahl
show that connectivity estimates for $|S_n(A,X) |$ imply
connectivity estimates for $|W_n(A,X)_\bullet |$.
Specificially,
Prop 2.9 and Thm 2.10 in~\cite{RWW17} imply that if
$(\mathcal C,\oplus ,0)$ is a symmetric monoidal homogeneous category
which is \emph{locally standard} at a pair of objects $(A,X)$,
then for positive integers $a,k\geq 1$, the
space of destabilizations $|W_n(A,X)_\bullet |$ is $\left(\frac{n-a} k\right)$-connected
for all $n\geq 0$
if and only if
$|S_n(A,X) |$ is $\left(\frac{n-a} k\right)$-connected
for all $n\geq 0$.
Here being locally standard at $(A,X)$ means that
\begin{itemize}
\item[\textbf{LS1}] the morphisms $(0\to A)\oplus X\oplus (0\to X)$
  and $(0\to A\oplus X)\oplus X$ are distinct in $\text{Hom}_{\mathcal C}(X,A\oplus X^{\oplus 2})$,
  and
\item[\textbf{LS2}] for all $n\geq 1$, the map
  $$
  \begin{tikzcd}[row sep = tiny]
    \text{Hom}_{\mathcal C}(X,A\oplus X^{\oplus n-1})
    \arrow[r]
    & \text{Hom}_{\mathcal C}(X,A\oplus X^{\oplus n})\\
    f\arrow[r,mapsto]
    & f\oplus (0\to X)
  \end{tikzcd}
  $$
  is injective.
\end{itemize}
Note that
\begin{prop}
  If $R$ is a semi-local ring with $2\in R^\times$, then
  $\Quad R$ is locally standard at $(V,W)$ for
  any pair of quadratic modules $V$ and $W$ as long as $V\neq 0$.  
\end{prop}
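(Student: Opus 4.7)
The plan is to verify the two axioms \textbf{LS1} and \textbf{LS2} directly from the definitions; both reduce to elementary bookkeeping once the morphisms in question are unpacked, and neither the semi-local hypothesis nor $2\in R^\times$ actually enters the argument (they are inherited from the surrounding context).

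\textbf{LS2} is immediate. If $f,g\colon W\to V\orthosum W^{\orthosum (n-1)}$ are form-preserving maps with $f\orthosum(0\to W)=g\orthosum(0\to W)$, then evaluating both sides on an arbitrary $w\in W$ (viewed in the $W$-summand of $W\oplus 0=W$) yields $(f(w),0_W)=(g(w),0_W)$ in $V\orthosum W^{\orthosum n}$, forcing $f(w)=g(w)$ for every $w$, hence $f=g$. No hypothesis on $V$ or $W$ is needed for this step.

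The content lies in \textbf{LS1}. I would unpack the two named morphisms as follows: using that $0$ is initial in $\Quad R$, so that $(0\to Y)$ denotes the unique morphism, one verifies that $(0\to V)\orthosum W\orthosum(0\to W)\colon W\to V\orthosum W\orthosum W$ sends $w\mapsto(0_V,w,0_W)$, while $(0\to V\orthosum W)\orthosum W\colon W\to V\orthosum W\orthosum W$ sends $w\mapsto(0_V,0_W,w)$. These are the two canonical inclusions of $W$ into the first and the second $W$-summand of $V\orthosum W^{\orthosum 2}$, respectively, and they agree iff $(0,w,0)=(0,0,w)$ in the underlying module $V\oplus W\oplus W$ for every $w\in W$. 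This fails exactly when $W$ admits a nonzero element; since any nonzero non-singular quadratic module over $R$ has nonzero underlying projective module, it suffices for the stabilization factor to be nonzero.

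The main (mild) obstacle is the correct identification of the two morphisms of \textbf{LS1} as the two canonical inclusions into distinct $W$-summands; once this is carried out by unpacking the symmetric monoidal structure on $\Quad R$ recalled in Section~\ref{subsec:monoid}, both axioms are automatic. In particular, in the application of interest to Euclidean orthogonal groups, $(A,X)=(0,\E^1_R)$ and $X=\E^1_R\neq 0$, so the relevant nonvanishing hypothesis is satisfied.
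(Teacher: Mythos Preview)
Your argument is correct and matches the paper's proof essentially verbatim: both simply unpack the two morphisms in \textbf{LS1} as $w\mapsto (0,w,0)$ and $w\mapsto (0,0,w)$ and observe that \textbf{LS2} is immediate from the definition of $\orthosum$. You also correctly note that the hypothesis actually used is that the stabilization factor $W$ is nonzero (rather than $V\neq 0$ as stated), which is what holds in the application $(0,\E^1_R)$.
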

\begin{proof}
  To see LS1, simply note that the two morphisms are given
  by $v\mapsto (0, v, 0)$ and $v\mapsto (0,0,v)$ respectively.
  Condition LS2 is clear from the definition of $\orthosum$.
\end{proof}
Thus we have:
\begin{thm}\label{thm:theorema}
  Let $A$ be a valuation ring with residue field $\k$,
  and assume that $2\in A^\times$.
  Consider the map
  \begin{equation}
    \label{eq:stabilization}
    H_i(O_n(A);\Z )
    \to
    H_i(O_{n+1}(A);\Z )
  \end{equation}
  induced by
  $$
  \begin{tikzcd}[row sep = tiny]
    O_n(A)\arrow[r] &O_{n+1}(A) \\
    f\arrow[r,mapsto]
    & f\orthosum\text{id}.
  \end{tikzcd}.
  $$
  Then~\eqref{eq:stabilization}
  is
  \begin{enumerate}[label=(\roman*)]
  \item a surjection for
    $
    i\leq \frac{n-m_A-1}{3}
    $
    and an isomorphism for
    $
    i\leq \frac{n-m_A-2}{3}
    $
    if $m_A <\infty$; and
  \item a surjection for
    $
    i\leq \frac{n-m_A}{2}
    $
    and an isomorphism for
    $
    i\leq \frac{n-m_A-1}{2}
    $
    if $m_A<\infty$ and $A$ is formally real;
  \item a surjection for
    $
    i\leq \frac{n-3-2P(\k )}{2P(\k ) + 1}
    $
    and an isomorphism for
    $
    i\leq \frac{n-4-2P(\k )}{2P(\k ) + 1}
    $
    if $A$ is henselian and $P(\k )<\infty$; and
  \item a surjection for
    $
    i\leq \frac{n-2-P(\k )}{P(\k ) + 1}
    $
    and an isomorphism for
    $
    i\leq \frac{n-3-P(\k )}{P(\k ) + 1}
    $
    if $A$ is henselian, $\k$ is formally real, and $P(\k )<\infty$.
  \end{enumerate}
  Further, if $K$ denotes the quotient field of $A$,
  then~\eqref{eq:stabilization} is
  \begin{enumerate}[label=(\roman*)]
    \setcounter{enumi}{4}
  \item a surjection for
    $
    i\leq \frac{n-m_K-1}{3}
    $
    and an isomorphism for
    $
    i\leq \frac{n-m_K-2}{3}
    $
    if $m_K<\infty$;
  \item a surjection for
    $
    i\leq \frac{n-m_K}{2}
    $
    and an isomorphism for
    $
    i\leq \frac{n-m_K-1}{2}
    $
    if $K$ is formally real and $m_K<\infty$;
  \item a surjection for
    $
    i\leq \frac{n-3-2P(K )}{2P(K ) + 1}
    $
    and an isomorphism for
    $
    i\leq \frac{n-4-2P(K )}{2P(K ) + 1}
    $
    if $P(K)<\infty$; and
  \item a surjection for
    $
    i\leq \frac{n-2-P(K )}{P(K ) + 1}
    $
    and an isomorphism for
    $
    i\leq \frac{n-3-P(K )}{P(K ) + 1}
    $
    if $K$ is formally real and $P(K)<\infty$.
  \end{enumerate}
\end{thm}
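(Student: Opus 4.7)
The plan is to feed everything assembled in this section into the main theorem of Randal-Williams--Wahl~\cite{RWW17}, applied to the pair $(0,\E^1_A)$ in the symmetric monoidal category $(\Quad A,\orthosum,0)$. The unit object $0$ is initial, and by the homogeneity theorem of the previous subsection $\Quad A$ is a homogeneous category; by the proposition immediately above it is also locally standard at $(0,\E^1_A)$, since $\E^1_A\neq 0$. The stabilization map in the statement is exactly the one induced by the standard sequence
$$
O_n(A)=\Aut_{\Quad A}(\E^n_A)\xrightarrow{{-}\orthosum\E^1_A}\Aut_{\Quad A}(\E^{n+1}_A)=O_{n+1}(A),
$$
so \cite[Thm.~A]{RWW17} directly produces the desired surjection and isomorphism conclusions once a connectivity estimate on the space of destabilizations $|W_n(0,\E^1_A)_\bullet|$ is in hand.

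To supply this estimate, I would invoke \cite[Prop.~2.9 and Thm.~2.10]{RWW17}, which under the locally standard hypothesis show that the semi-simplicial set $|W_n(0,\E^1_A)_\bullet|$ inherits the connectivity of the simplicial complex $|S_n(0,\E^1_A)|$. By the identification $S_n((V,q),\E^1_A)\cong X(q\orthosum n\langle 1\rangle)$ proved just above, specialized to $(V,q)=0$, this is precisely $|X(n\langle 1\rangle)|$, whose connectivity was the main theorem of Section~\ref{sec:connect}. Corollary~\ref{cor:cnt-ranges} gives eight connectivity estimates for $|X(n\langle 1\rangle)|$ --- one for each of the eight cases (i)--(viii) of the present theorem --- each of the form $\lfloor (n-a)/k\rfloor$-connected for an explicit slope $k$ and offset $a$.

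From here the argument is a matter of reading off the stability ranges from the connectivity ranges via the standard RWW dictionary, in which a space of destabilizations that is $\lfloor (n-a)/k\rfloor$-connected yields a stabilization map that is a surjection for $i\leq (n-a)/k$ and an isomorphism one degree below. Applying this to each of the eight cases of Corollary~\ref{cor:cnt-ranges} in turn produces the eight corresponding cases (i)--(viii) of the theorem. The only step that is not purely formal is matching constants between the connectivity bounds of Corollary~\ref{cor:cnt-ranges} and the stability ranges stated here, and this is straightforward bookkeeping; the genuine geometric content --- the high connectivity of the Stiefel complex, which rested on the arithmetic of $A$ via Propositions~\ref{prop:unit-in-intersection} and~\ref{prop:unit-in-intersection2} --- has already been done, so no new obstacle is expected at this stage.
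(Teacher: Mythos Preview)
Your proposal is correct and follows the same route as the paper: the paper's proof is the one-liner ``This follows from \cite[Thm~3.1]{RWW17} and Corollary~\ref{cor:cnt-ranges} above,'' and your argument simply unpacks this, recapitulating the homogeneity, local standardness, and identification $S_n(0,\E^1_A)\cong X(n\langle 1\rangle)$ established in the preceding subsections before invoking the connectivity corollary.
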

\begin{proof}
  This follows from in~\cite[Thm 3.1]{RWW17}
  and Corollary~\ref{cor:cnt-ranges} above.
\end{proof}
\subsection{Stability with twisted coefficients}
The framework of~\cite{RWW17} also gives us homology stability of $O_n(A)$
with certain twisted coefficients, which we now describe.

Recall that a \emph{coefficient system} for the sequence
$\lbrace O_n(A)\rbrace_1^\infty$
is a functor $F\colon \mathcal E\to\mathcal A$,
where $\mathcal A$ is an abelian category
and $\mathcal E\subseteq \Quad A$ denotes the full subcategory
spanned by the Euclidean spaces $\E^n_A$.
In particular, functoriality endows $F_n = F(\E^n_A)$
with the structure of a $\Z\lbrack O_n(A)\rbrack$-module,
and the maps $\E^n_A\orthosum (0\to\E^1_A)\colon \E^n_A\to\E^{n+1}_A$
induce maps
\begin{equation*}
  H_i(O_n(A);F_n )
  \to
  H_i(O_{n+1}(A);F_{n+1} ).
\end{equation*}
Homology with twisted coefficients is a powerful tool for computing
the homology groups in a \emph{family of representations}
$\lbrace F_n\rbrace_1^\infty$.
\begin{thm}\label{thm:theorema-abelian}
  Let $A$ be a valuation ring with residue field $\k$,
  and assume that $2\in A^\times$.
  Put $O_\infty (A) = \varinjlim O_n(A)$,
  and suppose $M$ is an $O_\infty (A)$-module
  on which the commutator subgroup
  $\Omega_\infty (A)\leq O_\infty (A)$
  acts trivially.
  For each $n\geq 0$, we consider $M$
  as an $O_n(A)$-module via restriction of scalars
  along the canonical map $O_n(A)\to O_\infty (A)$.
  Consider the map
  \begin{equation}
    \label{eq:stabilization2}
    H_i(O_n(A);M )
    \to
    H_i(O_{n+1}(A);M )
  \end{equation}
  induced by
  $$
  \begin{tikzcd}[row sep = tiny]
    O_n(A)\arrow[r] &O_{n+1}(A) \\
    f\arrow[r,mapsto]
    & f\orthosum\text{id}
  \end{tikzcd}.
  $$
  Then~\eqref{eq:stabilization2}
  is
  \begin{enumerate}[label=(\roman*)]
  \item a surjection for
    $
    i\leq \frac{n-m_A-2}{3}
    $
    and an isomorphism for
    $
    i\leq \frac{n-m_A-4}{3}
    $
    if $m_A<\infty$;
  \item a surjection for
    $
    i\leq \frac{n-2P(\k ) - 2P(\k ) - 2}{2P(\k ) + 1}
    $
    and an isomorphism for
    $
    i\leq\frac{n-2P(\k ) - 2P(\k ) - 4}{2P(\k ) + 1}
    $
    if $A$ is henselian and $P(\k )<\infty$; and
  \item a surjection for
    $$
    \textstyle{i\leq \frac{n-P(\k )-\max\lbrace 3, P(\k ) + 1\rbrace}
      {\max\lbrace 3, P(\k ) + 1\rbrace}}
    $$
    and an isomorphism for
    $$
    \textstyle{i\leq \frac{n-P(\k )-2-\max\lbrace 3, P(\k ) + 1\rbrace}
      {\max\lbrace 3, P(\k ) + 1\rbrace}}
    $$
    if $\k$ is formally real, $A$ is henselian, and $P(\k)<\infty$.
  \end{enumerate}
  Further, if $K$ denotes the quotient field of $A$,
  then~\eqref{eq:stabilization2} is
  \begin{enumerate}[label=(\roman*)]
    \setcounter{enumi}{3}
  \item a surjection for
    $
    i\leq \frac{n-m_K-2}{3}
    $
    and an isomorphism for
    $
    i\leq \frac{n-m_K-4}{3}
    $
    if $m_K<\infty$;
  \item a surjection for
    $
    i\leq \frac{n-2P(K ) - 2P(K ) - 2}{2P(K ) + 1}
    $
    and an isomorphism for
    $
    i\leq\frac{n-2P(K ) - 2P(K ) - 4}{2P(K ) + 1}
    $
    if $P(K)<\infty$; and
  \item a surjection for
    $$
    \textstyle{i\leq \frac{n-P(K )-\max\lbrace 3, P(K ) + 1\rbrace}
      {\max\lbrace 3, P(K ) + 1\rbrace}}
    $$
    and an isomorphism for
    $$
    \textstyle{i\leq \frac{n-P(K )-2-\max\lbrace 3, P(K ) + 1\rbrace}
      {\max\lbrace 3, P(K ) + 1\rbrace}}
    $$
    if $K$ is formally real and $P(K)<\infty$.
  \end{enumerate}
\end{thm}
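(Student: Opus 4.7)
The plan is to apply the abelian-coefficient version of the stability theorem of \cite{RWW17} to the pair $(0, \E^1_A)$ in the homogeneous monoidal category $(\Quad A, \orthosum, 0)$. All the structural input is already in place: $\Quad A$ is homogeneous, locally standard at any pair whose first factor is nonzero, and the space of destabilizations is identified via $|W_n(0,\E^1_A)_\bullet| \cong |\vec X(n\langle 1\rangle)_\bullet|$ with the ordered Stiefel complex. By \cite[Prop 2.9]{RWW17}, this space is at least as connected as the Stiefel complex $|X(n\langle 1\rangle)|$, for which Corollary~\ref{cor:cnt-ranges} supplies the relevant connectivity bounds in each of the arithmetic cases of the theorem.

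Next, I would observe that an $O_\infty(A)$-module $M$ on which the commutator subgroup $\Omega_\infty(A)$ acts trivially defines an abelian coefficient system in the sense of \cite{RWW17}: the action of each $O_n(A)$ on $M$ factors through $O_\infty(A)/\Omega_\infty(A)$, and the constant functor $\mathcal E \to \mathrm{Ab}$ with value $M$ and this action is compatible with the stabilization morphisms $\E^n_A \orthosum (0 \to \E^1_A)$. Feeding the connectivity bounds of Corollary~\ref{cor:cnt-ranges} into the abelian-coefficient stability theorem of \cite{RWW17} would then yield the stability ranges stated, which are obtained from the constant-coefficient ranges of Theorem~\ref{thm:theorema} by a fixed additive shift in the numerator governed by the slope of the connectivity estimate.

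The main obstacle will be the quantitative bookkeeping with these shifts. In particular, the abelian-coefficient theorem of \cite{RWW17} imposes a lower bound of three on the effective slope appearing in the range, which is what excludes the ``formally real $m_A$'' and ``formally real $m_K$'' cases (intrinsic slope two) from the six-case list here, and what produces the $\max\lbrace 3, P(\k)+1\rbrace$ and $\max\lbrace 3, P(K)+1\rbrace$ denominators in cases (iii) and (vi): whenever the slope coming from Corollary~\ref{cor:cnt-ranges} dips below three, the abelian-coefficient machinery replaces it by three. Once the minimum-slope condition and the additive shifts are carefully read off from the RWW statement and compared against Corollary~\ref{cor:cnt-ranges}, all six ranges follow by direct substitution; no further input is needed.
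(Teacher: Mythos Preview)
Your proposal is correct and matches the paper's (implicit) argument: the paper does not write out a separate proof for this theorem, but the surrounding setup makes clear that it is obtained by feeding the connectivity estimates of Corollary~\ref{cor:cnt-ranges} into the abelian-coefficient stability theorem of \cite{RWW17}, exactly as you describe. Your identification of the slope-$\geq 3$ constraint in the RWW abelian theorem as the reason for the omission of the two slope-$2$ cases and for the $\max\lbrace 3, P+1\rbrace$ denominators in (iii) and (vi) is the right explanation for the shape of the stated ranges.
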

\begin{exmp}
  For each $n\geq 0$, let $\Omega_n(A)\leq O_n(A)$
  denote the commutator subgroup.
  Then the map $H_i(\Omega_n(A);\Z )\to H_i(\Omega_{n+1}(A);\Z )$
  is an epimorphism (resp. isomorphism) for the ranges given
  in the previous theorem (see~\cite[Sec 3.1]{RWW17}).
\end{exmp}
We recall what it means for a coefficient system to have \emph{degree $r$
  at $N\in\Z$}.
The unique map $0\to \E^1_A$ induces a natural transformation
$\sigma\colon \text{id}_{\mathcal E}\to {-}\oplus \E^1_A$.
For a coefficient system $F\colon\mathcal E\to\text{Mod}_\Z$, we put
\begin{align*}
  \operatorname{ker} F = \operatorname{ker} \sigma_F\qquad\text{and}\qquad
  \operatorname{coker} F = \operatorname{coker} \sigma_F,
\end{align*}
where $\sigma_F = F(\sigma)\colon
F\to F\circ ({-}\oplus \E^1_A)$.
Then $F$ is said to have degree $r<0$ at $N\in\Z$
if $F(\E^n_A) = 0$ for all $n\geq N$, and recursively it is
said to have degree $r\geq 0$
at $N\in\Z$ if
\begin{enumerate}[label=(\roman*)]
\item The kernel $\operatorname{ker} F$ has degree $-1$ at $N$; and
\item The cokernel $\operatorname{coker} F$ has degree $r-1$ at $N-1$.
\end{enumerate}
Note that if $F$ and $G$ are coefficient systems of degree $r$ and $s$
at $N$, then $F\oplus G$ has degree $\max\lbrace r,s\rbrace$ at $N$.
\begin{thm}\label{thm:poly-stab}
  Let $A$ be a valuation ring with residue field $\k$,
  and assume that $2\in A^\times$.
  Let $F\colon \mathcal E\to\text{Mod}_\Z$ be a coefficient system of degree
  $r$ at $N\in\Z$.
  Consider the map
  \begin{equation}
    \label{eq:stabilization3}
    H_i(O_n(A);F_n )
    \to
    H_i(O_{n+1}(A);F_{n+1} )
  \end{equation}
  induced by
  $$
  \begin{tikzcd}[row sep = tiny]
    O_n(A)\arrow[r] &O_{n+1}(A) \\
    f\arrow[r,mapsto]
    & f\orthosum\text{id}
  \end{tikzcd}.
  $$
  Then~\eqref{eq:stabilization3}
  is
  \begin{enumerate}[label=(\roman*)]
  \item a surjection for
    $
    i\leq \frac{n-m_A-1}{3} -r 
    $
    and an isomorphism for
    $
    i\leq \frac{n-m_A-1}{3} -r -1
    $
    if $m_A<\infty$;
  \item a surjection for
    $
    i\leq \frac{n-m_A}{2} -r 
    $
    and an isomorphism for
    $
    i\leq \frac{n-m_A}{2} - r - 1
    $
    if $\k$ is formally real and $m_A<\infty$;
  \item a surjection for
    $
    i\leq \frac{n-3-2P(\k )}{2P(\k ) + 1} - r
    $
    and an isomorphism for
    $
    i\leq \frac{n-3-2P(\k )}{2P(\k ) + 1} - r -1
    $
    if $A$ is henselian and $P(\k)<\infty$; and
  \item a surjection for
    $
    i\leq \frac{n-2-P(\k )}{P(\k ) + 1} - r
    $
    and an isomorphism for
    $
    i\leq \frac{n-2-P(\k )}{P(\k ) + 1} - r - 1
    $
    if $A$ is henselian, $\k$ is formally real and $P(\k)<\infty$.
  \end{enumerate}
  Further, if $K$ denotes the quotient field of $A$,
  then~\eqref{eq:stabilization3} is
  \begin{enumerate}[label=(\roman*)]
    \setcounter{enumi}{4}
      \item a surjection for
    $
    i\leq \frac{n-m_K-1}{3} -r 
    $
    and an isomorphism for
    $
    i\leq \frac{n-m_K-1}{3} -r -1
    $
    if $m_K<\infty$;
  \item a surjection for
    $
    i\leq \frac{n-m_K}{2} -r 
    $
    and an isomorphism for
    $
    i\leq \frac{n-m_K}{2} - r - 1
    $
    if $K$ is formally real and $m_K<\infty$;
  \item a surjection for
    $
    i\leq \frac{n-3-2P(K )}{2P(K ) + 1} - r
    $
    and an isomorphism for
    $
    i\leq \frac{n-3-2P(K )}{2P(K ) + 1} - r -1
    $
    if $P(K)<\infty$; and
  \item a surjection for
    $
    i\leq \frac{n-2-P(K )}{P(K ) + 1} - r
    $
    and an isomorphism for
    $
    i\leq \frac{n-2-P(K )}{P(K ) + 1} - r - 1
    $
    if $K$ is formally real and $P(K)<\infty$.
 \end{enumerate}
\end{thm}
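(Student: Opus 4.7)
The plan is to deduce Theorem~\ref{thm:poly-stab} by feeding the connectivity estimates of Corollary~\ref{cor:cnt-ranges} into the polynomial-coefficients version of the main theorem of Randal-Williams--Wahl, exactly as in the proofs of Theorem~\ref{thm:theorema} and Theorem~\ref{thm:theorema-abelian}. The framework has already been set up: $(\Quad A,\orthosum ,0)$ is homogeneous, it is locally standard at $(0,\E^1_A)$, and the identification $S_n(0,\E^1_A)\cong X((n+1)\langle 1\rangle )$ together with Prop 2.9 and Thm 2.10 of~\cite{RWW17} translates the connectivity of Stiefel complexes into connectivity of the spaces of destabilizations $|W_n(0,\E^1_A)_\bullet |$.

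First I would recall that~\cite[Thm B]{RWW17} (the polynomial-coefficients version of the stability theorem) asserts that if $|W_n(A,X)_\bullet|$ is $\left(\frac{n-a}{k}\right)$-connected for all $n$ and if $F$ is a coefficient system of degree $r$ at $N$, then the stabilization map
$$
H_i(\Aut_{\mathcal C}(A\oplus X^{\oplus n});F_n )\to H_i(\Aut_{\mathcal C}(A\oplus X^{\oplus n+1});F_{n+1})
$$
is surjective in the range $ki\leq n-a-kr$ and an isomorphism in the range $ki\leq n-a-k(r+1)$, provided $n$ is large enough that the relevant coefficient-system conditions are in force. In our setting $\Aut_{\Quad A}(\E^n_A) = O_n(A)$, so this yields the desired shifted-by-$r$ stability statement.

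Next I would translate each of the eight cases in Corollary~\ref{cor:cnt-ranges} into the $(a,k)$ data needed to apply the theorem. For instance, case (i) of Corollary~\ref{cor:cnt-ranges} gives connectivity $\frac{n-m_A-3}{3}$, which upon reindexing corresponds to $k=3$ and an appropriate constant $a=m_A+\text{const}$, matching the surjection range $i\leq\frac{n-m_A-1}{3}-r$ and the isomorphism range one lower; the henselian Pythagoras cases give $k = 2P(\k)+1$, and the formally real variants give $k=2$ or $k=P(\k)+1$, in parallel with Theorem~\ref{thm:theorema}. Each of the eight cases is a direct substitution.

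There is no substantive obstacle: the only thing to check carefully is the bookkeeping that the constants in our connectivity estimates for $|X_l(n\langle 1\rangle)|$, once passed through the equivalence $|X_l|\simeq|\sk_{l-1}X|$ and through Prop 2.9 of~\cite{RWW17} converting connectivity of $|S_n|$ into connectivity of $|W_n|$, line up precisely with the ranges stated. This amounts to the same off-by-one reindexing already performed in the proofs of Theorem~\ref{thm:theorema} and Theorem~\ref{thm:theorema-abelian}, so the polynomial case adds only the uniform $-r$ shift in the surjective range and $-r-1$ shift in the isomorphism range that comes out of~\cite[Thm B]{RWW17}.
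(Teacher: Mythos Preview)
Your proposal is correct and matches the paper's approach: the paper gives no written proof for Theorem~\ref{thm:poly-stab} at all, leaving it as an immediate consequence of the Randal-Williams--Wahl machinery and Corollary~\ref{cor:cnt-ranges}, exactly as you outline. The only quibble is a citation detail---the polynomial-coefficient stability statement in~\cite{RWW17} is their Theorem~A (with the ranges spelled out in Theorem~4.20), not ``Thm~B''---but the argument itself is precisely what is intended.
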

\subsection{Homology stability of $O_n(\Z )$}
In~\cite{vogtmann82}, Vogtmann states that the Euclidean orthogonal groups
$O_n(R)$ also display homology stability when $R$
 is the ring of integers
in a totally real number field $K$.\footnote{
Recall that a field $K$ is \emph{totally real} if for every
embedding $\iota\colon K\hookrightarrow\C$, one has $\iota (K)\subseteq\R$.}
In particular, this includes the case $R=\Z$.

Confusingly, Vogtmann seems to treat part of this integral case
together with the field case, 
thus using Witt cancellation and hyperplane
reflections to show that $O_n(R)$
acts transitively on $p$-simplices in the Stiefel complex $X(\E^n_R)$.
However, hyperplane reflections do not exist over $R$ 
(as $2\not\in R^\times$), and Witt cancellation does not hold in general.
Instead, transitivity -- or, as it corresponds to in the \cite{RWW17} setup, 
\emph{local} homogeneity of $\Quad R$ -- can be deduced from
 \cite[Lem 1.3]{vogtmann82}, which states that the only unit vectors in
$\E^n_R$ are of the form $\pm e_i$, where $e_1,\dots ,e_n$ denote
the standard basis of $R^n$.
Clearly $O_n(R)$ then acts transitively on $p$-frames.

However, the fact that the unit vectors in $\E^n_R$ admit such a
simple description also means that $O_n(R)$ does not really contain
any ring-theoretic information. In fact,
\begin{prop}
  Let $R$ be the ring of integers in a totally real number field,
  and denote by $\mathrm{Aut}(X(\E^n_R))$ the group of automorphisms
  of the simplicial complex $X(\E^n_R)$.
  The homomorphism
  $$
  O_n(R)\to\mathrm{Aut}(X(\E^n_R)),
  $$
  coming from the action of $O_n(R)$ on $X(\E^n_R)$,
  is an isomorphism of groups.
\end{prop}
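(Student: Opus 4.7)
The plan is to identify both sides with the hyperoctahedral group $B_n = (\Z/2)^n \rtimes S_n$ via explicit descriptions. The crucial input, already cited in the surrounding text, is \cite[Lem 1.3]{vogtmann82}, which says that the set of unit vectors in $\E^n_R$ is precisely $\{\pm e_1,\dots,\pm e_n\}$, where $e_1,\dots,e_n$ denotes the standard basis of $R^n$. Hence the vertex set of $X(\E^n_R)$ has exactly $2n$ elements.

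First I would describe $X(\E^n_R)$ combinatorially. A direct computation gives $B_q(\pm e_i,\pm e_j) = \pm 2\delta_{ij}$, so two distinct vertices $v,w$ are joined by an edge iff they lie on different axes $\{+e_i,-e_i\}$, and antipodal pairs $\{e_i,-e_i\}$ are never joined. Consequently, the simplices of $X(\E^n_R)$ are exactly the subsets of $\{\pm e_1,\dots,\pm e_n\}$ containing at most one vector from each antipodal pair, which identifies $X(\E^n_R)$ with the boundary complex of the $n$-dimensional cross-polytope.

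Next I would show that $\mathrm{Aut}(X(\E^n_R)) \cong B_n$. Any simplicial automorphism $\varphi$ preserves the relation ``distinct and non-adjacent,'' and by the previous step this relation is precisely ``forms an antipodal pair.'' Therefore $\varphi$ permutes the $n$ antipodal pairs (giving $\sigma \in S_n$) and on each axis either fixes or swaps the two endpoints (giving signs $\epsilon_i \in \{\pm 1\}$). This defines an injective homomorphism $\mathrm{Aut}(X(\E^n_R)) \to B_n$, and every signed permutation of $\{\pm e_1,\dots,\pm e_n\}$ clearly preserves simplices, so the map is surjective.

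Finally I would trace through the canonical homomorphism $O_n(R) \to \mathrm{Aut}(X(\E^n_R))$. For injectivity, an $R$-linear map is determined by its values on $e_1,\dots,e_n$, and these values are all unit vectors. For surjectivity, given $\sigma\in S_n$ and signs $\epsilon_i\in\{\pm 1\}$, the signed permutation matrix sending $e_i\mapsto \epsilon_i e_{\sigma(i)}$ manifestly preserves $n\langle 1\rangle$ and lies in $O_n(R)$, and its induced automorphism of $X(\E^n_R)$ realizes the prescribed action on vertices. Composing with the isomorphism $\mathrm{Aut}(X(\E^n_R))\cong B_n$ completes the proof. There is no real obstacle here beyond Vogtmann's Lemma~1.3: once the vertex set is pinned down, both groups become the symmetry group of the cross-polytope by the same bookkeeping.
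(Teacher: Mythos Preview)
Your proof is correct and uses essentially the same ingredients as the paper's argument: Vogtmann's Lemma~1.3 to pin down the vertex set, and the observation that the relation ``distinct and non-adjacent'' characterizes antipodal pairs. The only difference is packaging: you route both sides through an explicit identification with the hyperoctahedral group $B_n$, whereas the paper checks bijectivity of $O_n(R)\to\mathrm{Aut}(X(\E^n_R))$ directly, using that an automorphism is determined by where it sends the maximal simplex $\{e_1,\dots,e_n\}$ (since $-e_i$ is the unique vertex not adjacent to $e_i$).
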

\begin{proof}
  Note that an element $\varphi\in O_n(R)$
  is determined by the images of the standard basis $\varphi (e_i)$,
  $i=1,\dots ,n$, so injectivity is clear.

  For surjectivity, let $f\in \mathrm{Aut}(X(\E^n_R))$.
  If $\sigma = \lbrace e_1,\dots ,e_n\rbrace$ is the simplex
  corresponding to the standard basis,
  then $f(\sigma ) = \lbrace f(e_1),\dots ,f(e_n)\rbrace$
  is again an $n$-frame, so there is $\varphi\in O_n(R)$
  with $\varphi (e_i) = f(\lbrace e_i\rbrace )$.
  But any automorphism of $X(\E^n_R)$ is uniquely determined
  by the images of the vertices corresponding to $e_1,\dots ,e_n$,
  as the vertex corresponding to $-e_i$ can be characterized as
  the unique vertex in $X(\E^n_R)$ (other than $e_i$ itself)
  which does not share an edge with $e_i$.
  Hence $\varphi$ must be the desired lift of $f$.
\end{proof}
In other words, ring theory disappears and we are left with
combinatorics; the groups $O_n(R)$ are just automorphism groups
for the simplicial complexes in the sequence of simplicial spheres
$$
\begin{tikzpicture}
  \filldraw[black] (0,0) circle (2pt) node[anchor=south west]{};
  \filldraw[black] (1,0) circle (2pt) node[anchor=south west]{};

  \filldraw[black] (3,0) circle (2pt) node[anchor=north east]{};
  \filldraw[black] (3.5,0.5) circle (2pt) node[anchor=south west]{};
  \filldraw[black] (3.5,-0.5) circle (2pt) node[anchor=north]{};
  \filldraw[black] (4,0) circle (2pt) node[anchor=north west]{};

  \draw (3,0)--(3.5,0.5);
  \draw (3,0)--(3.5,-0.5);
  \draw (4,0)--(3.5,0.5);
  \draw (4,0)--(3.5,-0.5);

  \filldraw[black] (6,0) circle (2pt) node[anchor=north west]{};
  \filldraw[black] (6.65,0.2) circle (2pt) node[anchor=north west]{};
  \filldraw[black] (6.35,-0.2) circle (2pt) node[anchor=north west]{};
  \filldraw[black] (7,0) circle (2pt) node[anchor=north west]{};
  \filldraw[black] (6.5,0.5) circle (2pt) node[anchor=north west]{};
  \filldraw[black] (6.5,-0.5) circle (2pt) node[anchor=north west]{};

  \draw (6,0)--(6.65,0.2);
  \draw (6,0)--(6.35,-0.2);
  \draw (7,0)--(6.65,0.2);
  \draw (7,0)--(6.35,-0.2);

  \draw (6.5,-0.5)--(6,0);
  \draw (6.5,-0.5)--(7,0);
  \draw (6.5,-0.5)--(6.65,0.2);
  \draw (6.5,-0.5)--(6.35,-0.2);

  \draw (6.5,0.5)--(6,0);
  \draw (6.5,0.5)--(7,0);
  \draw (6.5,0.5)--(6.65,0.2);
  \draw (6.5,0.5)--(6.35,-0.2);

  \node at (0.5,-1.5) {$n=1$};
  \node at (3.5,-1.5) {$n=2$};
  \node at (6.5,-1.5) {$n=3$};
  \node at (9,0) {$\cdots$};
\end{tikzpicture}
$$
\subsection{$K$-theoretic interpretation}
\label{subsec:k-thy}
As mentioned in the introduction, interest in the split-orthogonal
groups $O_{n,n}$ has been driven by its importance in hermitian
$K$-theory.
We recall the role of $BO_{\infty ,\infty}^+$ in $K$-theory here
and give an analogous interpretation of $BO_\infty^+$.

Let $R$ be a commutative ring.
\begin{prop}
  The inclusion of the full monoidal subcategory
  $\lbrace \H^{2n}\mid n\geq 0\rbrace\subseteq\Quad R$
  induces a homotopy equivalence
  on basepoint components of $K$-spaces.
  Hence
  $$
  K(\Quad R)\simeq K_0(\Quad R)\times BO_{\infty ,\infty }(R)^+,
  $$
  where $O_{\infty ,\infty }(R) = \varinjlim_n O_{n,n}(R)$.
\end{prop}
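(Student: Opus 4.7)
The plan is to combine the cofinality principle for $K$-theory of symmetric monoidal groupoids with the McDuff--Segal group completion theorem. Recall that the $K$-space of a symmetric monoidal category is computed as the group completion of the classifying space of its underlying groupoid of isomorphisms.

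First I would verify that $\lbrace\H^{2n}\rbrace_{n\geq 0}$ is cofinal in $\Quad R$, meaning that every $(V,q)\in\Quad R$ admits some $(V',q')$ with $(V,q)\orthosum (V',q')\cong\H^{2n}$. Taking $(V',q')=(V,-q)$, the diagonal $\Delta = \lbrace (v,v)\mid v\in V\rbrace\subseteq V\oplus V$ is totally isotropic for $q\orthosum (-q)$ and, using non-singularity of $(V,q)$, equals its own orthogonal complement. Hence $\Delta$ is a Lagrangian and $(V,q)\orthosum (V,-q)$ is metabolic; when $2\in R^\times$ (the setting of all main results of the paper), a standard argument shows that metabolic non-singular quadratic modules are hyperbolic, so $(V,q)\orthosum (V,-q)\cong\H^{2\rank V}$.

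Second, I would identify $K(\lbrace\H^{2n}\rbrace_n)$. The underlying groupoid of isomorphisms of this sub-monoidal category is $\coprod_{n\geq 0}BO_{n,n}(R)$, with $\orthosum$ realising the usual stabilization $BO_{n,n}(R)\to BO_{n+1,n+1}(R)$ induced by block-sum with $\H^2$. Applying the McDuff--Segal group completion theorem to this commutative topological monoid yields
\[
K(\lbrace\H^{2n}\rbrace_n)\simeq\Z\times BO_{\infty,\infty}(R)^+,
\]
with basepoint component $BO_{\infty,\infty}(R)^+$.

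Third, I would invoke the cofinality theorem for $K$-theory of symmetric monoidal groupoids (Grayson--Thomason type), which says that a cofinal inclusion induces a homotopy equivalence on basepoint components of $K$-spaces (the two $\pi_0$'s differing only by the cokernel of $K_0(\lbrace\H^{2n}\rbrace)\to K_0(\Quad R)$). This yields the first assertion. Since $K$-spaces are grouplike $E_\infty$-spaces, all their components are homotopy equivalent to the basepoint component, so
\[
K(\Quad R)\simeq K_0(\Quad R)\times BO_{\infty,\infty}(R)^+,
\]
as claimed. The main obstacle lies in the cofinality step: verifying carefully that the diagonal is a Lagrangian for general projective $V$ and that metabolic implies hyperbolic; both are standard in hermitian $K$-theory but warrant a clean citation or sketch.
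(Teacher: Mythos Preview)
Your overall strategy---cofinality plus group completion---is exactly the paper's approach, but there is a genuine gap in your cofinality step. You conclude that $(V,q)\orthosum(V,-q)\cong\H^{2\rank V}$, yet this is only correct when $V$ is free. By definition the objects $\H^{2n}$ have underlying module $R^{2n}$, whereas the underlying module of $(V,q)\orthosum(V,-q)$ is $V\oplus V$. Your Lagrangian argument (equivalently, the identity $(V,q)\orthosum(V,-q)\cong\H(V)$ that the paper invokes) only identifies this with the hyperbolic module on the \emph{projective} module $V$, which need not lie in the subcategory $\lbrace\H^{2n}\mid n\geq 0\rbrace$. Over a general commutative ring $\rank V$ is not even globally well defined, so the expression $\H^{2\rank V}$ is already problematic.

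The fix, which is precisely what the paper does, is to pick a finitely generated projective $U$ with $V\oplus U\cong R^n$ (using that $V$ is finitely generated projective) and then observe
\[
(V,q)\orthosum(V,-q)\orthosum\H(U)\;\cong\;\H(V)\orthosum\H(U)\;\cong\;\H(V\oplus U)\;\cong\;\H(R^n)\;\cong\;\H^{2n}.
\]
This extra summand $\H(U)$ is exactly what is required to land in $\lbrace\H^{2n}\rbrace$ and complete the cofinality verification. Once this is in place, your second and third steps agree with the paper's argument.
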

\begin{proof}
  For any projective $R$-module $V$, let $\H (V)$ denote the
  quadratic module $(V\oplus V^*,q_{\H })$
  where $q_{\H }(m,\phi ) = \phi (m)$.
  The quadratic module $\H (V)$ is called the \emph{hyperbolic module}
  associated to $M$,
  justified by the fact that $\H (R^n)\cong \H^{2n}$.
  More generally, one finds by an elementary argument that
  $(V,q)\orthosum (V,-q)\cong \H (V)$
  for any quadratic module $(V,q)$.
  If $(V,q)$ is a quadratic module, then since $V$
  is finitely-generated projective, there is an
  isomorphism of modules $R^n\cong V\oplus U$ for
  some $n$ and $U$.
  Thus we find $(V,q)\orthosum (V,-q)\orthosum\H (U)
  \cong \H (V)\orthosum\H (U)
  \cong \H (V\oplus U)
  \cong\H (R^n)\cong\H^{2n}$.
  This shows that
  $\lbrace \H^{2n}\mid n\geq 0\rbrace$
  is cofinal in $\Quad R$,
  so the inclusion of this monoidal subcategory
  induces a homotopy equivalence
  on basepoint components of $K$-spaces.
  Since $\lbrace \H^{2n}\mid n\geq 0\rbrace$
  is generated by $\H^2$,
  a standard group completion argument (see~\cite{rw13} or~\cite{nikolaus17})
  shows that
  $$
  K(\lbrace \H^{2n}\mid n\geq 0\rbrace )
  \simeq\hocolim \left(
    \mathcal H\xrightarrow{{-}\orthosum\H^2}
    \mathcal H\xrightarrow{{-}\orthosum\H^2}\dots
  \right)^+,
  $$
  where $\mathcal H$ is the classifying space of the groupoid core
  of $\lbrace \H^{2n}\mid n\geq 0\rbrace$,
  i.e. $\mathcal H = \coprod_{n\geq 0} BO_{n,n}(R)$.
  But here
  $$
  \hocolim \left(
    \mathcal H\xrightarrow{{-}\orthosum\H^2}
    \mathcal H\xrightarrow{{-}\orthosum\H^2}\dots
  \right)\simeq\Z\times BO_{\infty ,\infty }(R),
  $$
  and the conclusion follows.
\end{proof}
Here $K(\Quad R )$ is the hermitian $K$-theory space of Karoubi
and others, also denoted $\mathscr L(R)$.
By analogy, the space $BO_\infty(R)^+$ is related to
``positive-definite quadratic $K$-theory''.
\begin{prop}
  Let $\operatorname{Quad}^+( R)\subseteq\Quad R$ be the full monoidal
  subcategory spanned by non-singular submodules of Euclidean spaces.
  Then
  $$
  K\left(
    \operatorname{Quad}^+( R)
  \right)\simeq
  K_0\left(
    \operatorname{Quad}^+( R)
  \right)
  \times BO_\infty(R)^+.
  $$
\end{prop}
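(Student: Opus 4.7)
The plan is to mimic closely the proof of the preceding proposition for $K(\Quad R)$, replacing hyperbolic modules by Euclidean ones. The two ingredients we need are (i) cofinality of the monoidal subcategory spanned by $\{\E^n_R\mid n\geq 0\}$ inside $\operatorname{Quad}^+(R)$, so that the inclusion induces a homotopy equivalence on basepoint components of $K$-spaces, and (ii) the standard group-completion identification of the $K$-theory of this generated submonoid.

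For cofinality, let $(V,q)\in\operatorname{Quad}^+(R)$; by definition of $\operatorname{Quad}^+(R)$, $(V,q)$ embeds as a quadratic submodule of some $\E^n_R$. Then Proposition~\ref{prop:non-sing-summand} gives the orthogonal decomposition $\E^n_R \cong (V,q)\orthosum (V,q)^\bot$, and $(V,q)^\bot$ is itself a non-singular submodule of $\E^n_R$, hence again an object of $\operatorname{Quad}^+(R)$. Thus every object of $\operatorname{Quad}^+(R)$ admits an orthogonal complement inside $\operatorname{Quad}^+(R)$ whose sum is Euclidean, which is exactly the cofinality condition one needs.

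For the second step, note that the monoidal subcategory spanned by $\{\E^n_R\mid n\geq 0\}$ is generated by the single object $\E^1_R$. A standard group-completion argument (see~\cite{rw13,nikolaus17}) then gives
$$
K\bigl(\{\E^n_R\mid n\geq 0\}\bigr)\;\simeq\;\hocolim\left( \mathcal E\xrightarrow{{-}\orthosum \E^1_R}\mathcal E\xrightarrow{{-}\orthosum \E^1_R}\cdots\right)^+,
$$
where $\mathcal E$ is the classifying space of the groupoid core of $\{\E^n_R\mid n\geq 0\}$, namely $\mathcal E=\coprod_{n\geq 0}BO_n(R)$. The inner homotopy colimit is $\Z\times BO_\infty(R)$, and thus after plus-construction we obtain $\Z\times BO_\infty(R)^+$. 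Combining this with the cofinality statement, the basepoint component of $K(\operatorname{Quad}^+(R))$ is $BO_\infty(R)^+$, and setting the discrete factor $\pi_0$ equal to $K_0(\operatorname{Quad}^+(R))$ by definition yields the claimed equivalence
$$
K\bigl(\operatorname{Quad}^+(R)\bigr)\;\simeq\;K_0\bigl(\operatorname{Quad}^+(R)\bigr)\times BO_\infty(R)^+.
$$

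The only genuinely delicate point is the cofinality, which could fail for a subtler positive-definite subcategory if one chose a definition not closed under orthogonal complements in Euclidean spaces; the definition given (\emph{non-singular submodules} of Euclidean spaces, with $2\in R^\times$ implicit throughout) is tailored so that Proposition~\ref{prop:non-sing-summand} applies immediately. Everything else is formal and parallels the previous proposition verbatim.
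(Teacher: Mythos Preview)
The paper states this proposition without proof, so there is no argument to compare against directly. Your proof is correct and is exactly the intended analogue of the proof given for the preceding proposition on $K(\Quad R)$: cofinality of $\{\E^n_R\}$ in $\operatorname{Quad}^+(R)$ via Proposition~\ref{prop:non-sing-summand}, followed by the standard group-completion argument applied to the singly-generated monoid $\{\E^n_R\}$.
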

Note that if $R$ is a field, or more generally a local ring,
a non-singular quadratic module $(M,q)$ lies in $\operatorname{Quad}^+(R)$
if and only if $q(x)$ is a sum of squares for each $x\in M$.

\appendix
\section{Arithmetic invariants of local rings}
\label{sec:arithm-invars}
\subsection{Basic facts and definitions}
The unnamed invariant $m_A$ is related to other ring invariants
on which there is a rich and well-developed theory
(see Lam's textbook~\cite{lam-textbook} for an overview).
For each $n\in\N$, let
$$
S_n(A) = \lbrace
x_1^2 + \dots + x_n^2\mid x_1,\dots ,x_n\in A
\rbrace\subseteq A.
$$
Note that $S_n(A)$ is exactly the set of elements of $A$
that are represented by Euclidean $n$-space $\E_A^n$.
We define the invariants
\begin{align*}
  P(A) &= \inf\lbrace
         p\in\N\mid \bigcup_{n=1}^\infty S_n(A)
         = S_p(A)
         \rbrace \tag{\emph{Pythagoras number}},\\
  s(A) &= \inf\lbrace
         s\in\N\mid -1\in S_s(A)
         \rbrace \tag{\emph{Stufe}},\\
  u(A) &= \sup\lbrace
         \dim_{\k} \redu V
         \mid\text{$(V,q)\in\Quad A$ is isotropic}
         \rbrace.
\end{align*}
The invariant $u(A)$ is sometimes called the \emph{$u$-invariant}
of $A$.

The letter $u$ comes from an alternative characterization, which
we give now.
\begin{defn}
  A non-singular quadratic module $(V,q)$ over a commutative ring $R$
  is said to be \emph{universal} if it represents every unit of $R$.
\end{defn}
\begin{prop}
  Let $A$ be a local ring with residue field $\k$,
  and assume that $2\in A^\times$.
  Then $u(A)$ equals
  the smallest number $u$
  such that every non-singular quadratic module $(V,q)$
  with $\dim_\k\redu V\geq u$ is universal.
\end{prop}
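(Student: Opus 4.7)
The plan is to show $u(A) = u'(A)$ by proving both inequalities, using the Representation Theorem and the Diagonalization Theorem from Section~\ref{sec:homog-quad-R}; both are available because $A$ is local, in particular semi-local, with $2 \in A^\times$. Here $u'(A)$ denotes the universality threshold named in the statement. The key identity is the Representation Theorem's equivalence between ``$(V,q)$ represents the unit $a$'' and ``$q \orthosum \langle -a\rangle$ is isotropic''; the rest of the argument combines this with the characterization of $u(A)$ as a dimensional bound on isotropy (which I read in the standard way, so that $\dim_\k \redu V > u(A)$ forces $(V,q)$ to be isotropic).

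For the easy direction $u'(A) \leq u(A)$: take a non-singular $(V,q)$ with $\dim_\k \redu V \geq u(A)$ and any unit $a \in A^\times$. The form $q \orthosum \langle -a\rangle$ has reduction of dimension $\dim_\k \redu V + 1$, which strictly exceeds $u(A)$, and so by the defining property of $u(A)$ it is isotropic. The Representation Theorem then gives that $(V,q)$ represents $a$, establishing universality.

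For the reverse inequality $u(A) \leq u'(A)$: take a non-singular $(V,q)$ of rank $n > u'(A)$. The Diagonalization Theorem gives $V \cong \langle a_1, \dots, a_n\rangle$ with each $a_i \in A^\times$. The codimension-one summand $W = \langle a_1, \dots, a_{n-1}\rangle$ is non-singular of rank $n - 1 \geq u'(A)$, hence universal, so $W$ represents $-a_n$; applying the Representation Theorem to this representation exhibits $V \cong W \orthosum \langle a_n\rangle$ as isotropic.

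No step presents a real obstacle; the only bookkeeping point is that for the local ring $A$ the reduction $\redu V = V/\mathfrak m V$ has $\k$-dimension equal to the rank of the free $A$-module $V$, and any representation of a unit is automatically primitive (since $q(x) \in A^\times$ forces $\redu x \neq 0$ in $\redu V$), so the notions of isotropy over $A$ and non-trivial zeros interact with unit representation exactly as in the field case.
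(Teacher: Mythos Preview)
Your proof is correct and follows essentially the same route as the paper's: both directions hinge on the Representation Theorem, with the easy inequality obtained by adjoining $\langle -a\rangle$ to push the dimension strictly above $u(A)$, and the reverse inequality obtained by diagonalizing and using universality of the codimension-one summand to represent $-a_n$. Your reading of the definition of $u(A)$ (dimension strictly exceeding $u(A)$ forces isotropy) is the intended one, and your bookkeeping about rank versus $\dim_\k\redu V$ and primitivity of unit representations is accurate.
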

\begin{proof}
  Let $\tilde u\in\N\cup\lbrace\infty\rbrace$ be the smallest number with
  respect to the property that
  every non-singular quadratic module $(V,q)$ with $\dim_\k\redu V\geq\tilde u$
  is universal. We must show that $\tilde u = u(A)$, as defined previously.
  
  Let $(V,q)$ be a non-singular quadratic module with $\dim_\k\redu V\geq u(A)$.
  We claim that $(V,q)$ is universal.
  Indeed, let $a\in A^\times$.
  Then $\dim_{\k}\redu V\oplus\k > u(A)$, so the non-singular quadratic module
  $(V\oplus A, q\orthosum \langle -a\rangle )$ is isotropic.
  By the representation theorem, we thus have that $q$ represents $a$,
  so $V$ is universal.
  Hence $u(A)\geq\tilde u$

  For the reverse inequality, suppose $(V,q)$ has $\dim_\k\redu V\geq\tilde u$.
  We must show that $\redu V$ is isotropic.
  By the diagonalization theorem, we may assume that
  $q\cong \langle a_1,\dots ,a_n\rangle$ with $a_1,\dots ,a_n\in A^\times$.
  Then $\langle a_1,\dots ,a_{n-1}\rangle$ is universal, hence
  represents $-a_n$, and the representation theorem gives that
  $\langle a_1,\dots ,a_n\rangle$ is isotropic as desired.
\end{proof}
It follows immediately from the proposition that when $2\in A^\times$,
one has
$$
m_A\leq u(A)\quad\text{and}\quad s(A)\leq u(A).
$$
Also, if $-1 = \sum_{j=1}^sa_j^2$,
then for \emph{any} element $x\in A$ we find
$$
x = \left(
  \frac{x+1} 2
\right)^2
+ \sum_{j=1}^s
\left(
  \frac{a_j(x-1)} 2
\right)^2,
$$
so $x$ is a sum of $s+1$ squares.
In particular, this implies that
\begin{equation}\label{eq:pythagoras-stufe}
  P(A)\leq s(A)+1,
\end{equation}
still under the assumption that $2\in A^\times$.
Note also that if $s(A) < \infty$,
then $s(A)\leq P(A)$.

\begin{prop}\label{prop:invariant-examples}
  \leavevmode
  \begin{enumerate}[label=(\roman*)]
  \item If $F$ is a field, then $F$ is Pythagorean if and only if $m_F$ = 1.
  \item $s(\F_q)\in\lbrace 1,2\rbrace$, 
    $m_{\F_q} = P(\F_q ) = u(\F_q ) = 2$ for $q=p^n$,
    $p\neq 2$.
  \item If $F$ is a local or global field, then $m_F\leq 4$.
  \end{enumerate}
\end{prop}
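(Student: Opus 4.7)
The plan is to handle the three parts of Proposition~\ref{prop:invariant-examples} separately, leveraging the chain of inequalities $P(F)\leq m_F\leq u(F)$ (both proved in the paragraphs immediately preceding the proposition) to reduce the computation of $m_F$ to classical facts about the Pythagoras number and the $u$-invariant.

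For (i), the argument is essentially tautological. A one-dimensional positive-definite form over $F$ has the shape $\langle a\rangle$ for some nonzero sum of squares $a\in F$, and $\langle a\rangle$ contains a unit vector iff $ax^2 = 1$ is solvable, iff $a$ is itself a square. Thus $m_F = 1$ unwinds exactly to the statement that every nonzero sum of squares in $F$ is a square, which is the definition of Pythagorean.

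For (ii), I would assemble three classical inputs about $\F_q$ with $q = p^n$ and $p$ odd. First, a pigeonhole argument on the sets $\{x^2:x\in\F_q\}$ and $\{c-y^2:y\in\F_q\}$ (each of cardinality $(q+1)/2$) shows that every element of $\F_q$ is a sum of two squares. Second, the elementary order-theoretic computation in $\F_q^\times$ shows $-1\in\F_q^{\times 2}$ iff $q\equiv 1\pmod 4$, which gives $s(\F_q)\in\{1,2\}$. Third, Chevalley--Warning applied to any non-singular three-variable diagonal form shows that every non-singular ternary quadratic form over $\F_q$ is isotropic, hence $u(\F_q)\leq 2$, and the hyperbolic plane $\langle 1,-1\rangle$ witnesses $u(\F_q)\geq 2$. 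The first input gives $P(\F_q)\leq 2$, and $P(\F_q)\geq 2$ because $\F_q^\times$ is not entirely squares, so $P(\F_q) = u(\F_q) = 2$. The inequalities $2 = P(\F_q)\leq m_{\F_q}\leq u(\F_q) = 2$ then squeeze $m_{\F_q}$ to $2$.

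For (iii), since $m_F\leq u(F)$, it suffices to show $u(F)\leq 4$ for local and global fields of characteristic $\neq 2$. For local fields, this is the classical $u$-invariant computation found in Lam's textbook. For a global field $F$, I would argue via Hasse--Minkowski: given a positive-definite non-singular $(V,q)$ of dimension $\geq 5$, the form $q\orthosum\langle -1\rangle$ is non-singular of dimension $\geq 6$, hence isotropic at every non-archimedean completion by the local bound; at each real archimedean place $v$, a diagonalization of $q$ has entries that are sums of squares in $F$, hence positive in $F_v\subseteq\R$, so $q_{F_v}$ is positive-definite in the classical sense and therefore represents $1$; at complex places the statement is trivial. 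By Hasse--Minkowski, $q\orthosum\langle -1\rangle$ is globally isotropic, and by the representation theorem from Section~\ref{sec:homog-quad-R}, $q$ globally represents $1$. The main obstacle is this last case: the local $u$-invariant bound and Hasse--Minkowski are powerful but non-elementary, and for function fields over finite fields one has to substitute a Tsen--Lang style argument for the classical local computation, with archimedean contributions absent; I would cite these results rather than reprove them.
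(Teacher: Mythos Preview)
Your approach to (i) and (ii) matches the paper's: both squeeze $m_{\F_q}$ between $P(\F_q)$ and $u(\F_q)$, differing only in which classical references are invoked.  One small slip: the hyperbolic plane $\langle 1,-1\rangle$ is \emph{isotropic}, so it does not witness $u(\F_q)\geq 2$ under the standard (anisotropic) definition; in any case this step is superfluous, since $2=P(\F_q)\leq m_{\F_q}\leq u(\F_q)\leq 2$ already forces $u(\F_q)=2$.

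In (iii) there is a genuine gap in your local-field case.  You write ``since $m_F\leq u(F)$, it suffices to show $u(F)\leq 4$,'' but this reduction fails for $F=\R$, where $u(\R)=\infty$ (the forms $n\langle 1\rangle$ are anisotropic for every $n$).  The paper handles this by treating $\R$ and $\C$ separately: both are Pythagorean, so $m_F=1$ by part (i), and only the non-archimedean local fields are covered by the bound $u(F)\leq 4$ from Lam.  The same issue reappears in your framing of the global case---formally real global fields such as $\Q$ also have $u=\infty$---but your actual Hasse--Minkowski argument does not really use $u(F)\leq 4$; it directly shows that a positive-definite $q$ represents $1$, i.e.\ it bounds $m_F$, and for that purpose it is essentially correct and is exactly what the paper has in mind.

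There is also an off-by-one in your global argument: to conclude $m_F\leq 4$ you must start from $\dim V\geq 4$, so that $q\orthosum\langle -1\rangle$ has dimension $\geq 5$ and is isotropic at every non-archimedean place by $u(F_v)\leq 4$.  Starting from $\dim V\geq 5$ as you do would only yield $m_F\leq 5$.
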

\begin{proof}
  (i) Recall that a Pythagorean field is a field whose Pythagoras
  number is one.
  Suppose $F$ is Pythagorean and 
  let $V$ be a non-singular nonzero subspace of $\E^n_F$.
  Pick an orthogonal basis $\lbrace v_1,\dots ,v_k\rbrace$
  of $V$, and let $a_j = q(v_j)$ for each $j$.
  Each $a_j$ is a sum of squares, so the assumption
  provides $b\in F$ with $b^2 = a_1$.
  Then $1/b\cdot v_1$ is a unit vector in $V$.
  Conversely, suppose $m_F = 1$ and let $a\in F$ be a sum of squares,
  say, $a = x_1^2 + \dots + x_n^2$. We may assume $a\neq 0$ since
  $0 = 0^2$. Then the span of $v = (x_1,\dots ,x_n)$ is a non-singular
  subspace of $\E^n_F$, so by assumption there is $b\in F$ with
  $1 = n\langle 1\rangle (bv) = b^2a$, implying that $a = (1/b)^2$.

  (ii) For the fact that every non-singular binary quadratic
  form over $\F_q$ is universal, see for instance~\cite[Prop II.3.4]{lam-textbook}.
  To show the remaining statements, it suffices by (i) and the
  inequalities preceding the proposition to see that $P(\F_q)> 1$.
  Non-squares exist in $\F_q$, as otherwise the function $\F_q\to\F_q$,
  $x\mapsto x^2$, would be surjective and thus also injective.\footnote{
    I thank Kasper Andersen for teaching me this elementary argument. 
    }
  But
  $1 = 1^2 = (-1)^2$ and $1\neq -1$ since $q$ is not a power of two.
  On the other hand, since $u (\F_q ) < \infty$, every element of $\F_q$ is a sum
  of squares; hence $P(\F_q ) > 1$.

  (iii) In the local field case, we invoke the classification of local fields.
  If $F$ is $\R$ or $\C$, then $F$ is pythagorean and we have
  $m_F = 1$ by (i).
  Otherwise $F$ is a local field with finite residue class field and
  $u(F)\leq 4$ by~\cite[Thm VI.2.12]{lam-textbook}, so $m_F\leq u(F)\leq 4$.
  The global field case follows from the local field case by the
  Hasse--Minkowski theorem.
\end{proof}
Various other bounds on $P(F)$, $s(F)$ and $u(F)$ can be found in
the literature on these invariants.
We highlight the following difficult result, a proof of which
is found in Scharlau's textbook~\cite{scharlau}:
\begin{thm}[Tsen--Lang]
  Let $F$ be a field of transcendence degree $d$ over an algebraically
  closed field $K$. Then $u(F)\leq 2^d$.
\end{thm}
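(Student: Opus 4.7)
The plan is to deduce the $u$-invariant bound from the stronger statement that $F$ is a $C_d$-field, in the sense of Lang; that is, every homogeneous polynomial over $F$ of degree $e$ in more than $e^d$ variables has a nontrivial zero. Applied to the diagonal quadratic form $\langle a_1,\dots ,a_n\rangle$ with $n>2^d$ and all $a_i\in F^\times$, the $C_d$-condition furnishes a nontrivial zero, and since the form is non-singular the zero is automatically primitive. This shows that every non-singular quadratic form over $F$ in more than $2^d$ variables is isotropic, giving $u(F)\leq 2^d$. So it suffices to show that such an $F$ is $C_d$.

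The proof of the $C_d$-property proceeds by induction on $d$. The base case $d=0$ is the observation that an algebraically closed field is $C_0$: a homogeneous polynomial in more than one variable always has a nontrivial zero (set one variable to a root of the one-variable specialization). For the inductive step, I would establish the two standard stability properties of the class of $C_i$-fields:
\begin{enumerate}[label=(\roman*)]
\item Any algebraic extension of a $C_i$-field is $C_i$.
\item If $F$ is $C_i$, then the rational function field $F(t)$ is $C_{i+1}$.
\end{enumerate}
Property (i) is a straightforward argument: a polynomial over a finite algebraic extension $E/F$ of degree $m$ in many variables is interpreted over $F$ by Weil restriction of scalars, multiplying variable count by $m$ while keeping degree fixed, so a nontrivial $F$-zero is produced by the hypothesis. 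For the general algebraic case one reduces to the finite case because any equation involves only finitely many coefficients. Given (i) and (ii), if $F$ has transcendence degree $d$ over an algebraically closed $K$, choose a transcendence basis $t_1,\dots ,t_d$, so that $F$ is algebraic over $K(t_1,\dots ,t_d)$; iterating (ii) from the base case shows $K(t_1,\dots ,t_d)$ is $C_d$, and (i) then gives $C_d$ for $F$.

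The main obstacle is property (ii), which is Lang's inductive step and contains the core Tsen-style parameter counting. The idea is: given a homogeneous polynomial $f\in F(t)[X_1,\dots ,X_n]$ of degree $e$ with $n>e^{i+1}$, clear denominators so that $f\in F[t][X_1,\dots ,X_n]$ of degree $r$ in $t$, and look for a solution of the form $X_j=\sum_{k=0}^{N}c_{j,k}t^k$ with the $c_{j,k}\in F$ to be determined. Substituting, one obtains a system of homogeneous polynomials over $F$ in the variables $c_{j,k}$ of degree $e$; a careful count shows the number of unknowns $n(N+1)$ grows faster in $N$ than $e^i$ times the number of equations $r+eN+1$ as soon as $n>e^{i+1}$, so for $N$ sufficiently large the $C_i$-hypothesis on $F$ applied successively (or to a single polynomial of appropriate norm form) produces a nontrivial zero, which then yields the desired $(X_1,\dots ,X_n)$. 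One must take care that the zero is genuinely nontrivial, which is handled by choosing $N$ so that the inequality is strict and using that the leading coefficient system cannot all vanish simultaneously on a nontrivial solution. With (ii) in hand, the induction closes and the theorem follows.
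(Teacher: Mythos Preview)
The paper does not actually prove this theorem: it is stated as a ``difficult result'' and the reader is referred to Scharlau's textbook for a proof. So there is no paper proof to compare against.

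Your sketch is the standard Tsen--Lang argument (indeed essentially what one finds in Scharlau or in Lang's original paper): reduce the $u$-invariant bound to the $C_d$-property, and establish $C_d$ by induction on the transcendence degree using the two stability results (algebraic extensions and adjoining a transcendental). The outline is correct. A couple of points where you are a bit imprecise: in the inductive step (ii), the coefficient-matching produces a \emph{system} of homogeneous forms of degree $e$ over $F$, not a single one, so you need the fact that $C_i$-fields are closed under simultaneous solution of such systems (Lang's ``normic form'' trick, or the equivalent statement that $C_i$ implies strongly $C_i$); your parenthetical ``or to a single polynomial of appropriate norm form'' shows you are aware of this, but it is the genuine content of the step and deserves to be stated cleanly. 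Also, your reduction from $C_d$ to the $u$-invariant via diagonalization tacitly assumes $\operatorname{char} F\neq 2$; in characteristic $2$ one must argue slightly differently (e.g.\ using the standard two-dimensional blocks), though the conclusion $u(F)\leq 2^d$ still holds.
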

\subsection{Invariants of henselian local rings}
Knowing that there are plenty of fields with finite arithmetic invariants,
we next consider how arithmetic properties of a local ring $A$ relate to
those of its residue field $\k$.
\begin{prop}\label{prop:arithm-invars-ineqs}
  Let $A$ be a local ring with residue field $\k$. Then
  \begin{align}
    m_\k &\leq m_A,\label{res-m-ineq} \\
    P(\k )&\leq P(A),\label{res-p-ineq} \\
    s(\k )&\leq s(A),\label{res-s-ineq}\\
    u(\k )&\leq u(A)\label{res-u-ineq}
  \end{align}
  with equality in~\eqref{res-m-ineq},~\eqref{res-s-ineq},
  and~\eqref{res-u-ineq}
  if $A$ is henselian.
\end{prop}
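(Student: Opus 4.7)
My plan is to establish each of the four inequalities by lifting from $\k$ to $A$, and then deduce the reverse inequalities for henselian $A$ via Hensel's lemma. Throughout I tacitly assume $2\in A^\times$, which is the standing hypothesis in the applications and is needed to invoke the diagonalization theorem and Hensel's lemma on $T^2-c$; the key general tool is that $A$ is local, so an element whose residue is non-zero is a unit.

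For \eqref{res-p-ineq} and \eqref{res-s-ineq}, I would lift sums of squares termwise. Given $\bar a=\sum \bar x_i^2$ in $\k$, picking arbitrary lifts $x_i\in A$ yields $a:=\sum x_i^2\in A$ reducing to $\bar a$; writing $a$ as a sum of $P(A)$ squares in $A$ and reducing gives the required bound in $\k$, and \eqref{res-s-ineq} is the case $\bar a=-1$. For \eqref{res-u-ineq}, I use that a nonsingular isotropic $(\bar V,\bar q)$ over $\k$ contains a hyperbolic plane, so $\bar V\cong\langle 1,-1\rangle\orthosum \bar V'$; lifting a diagonalization of $\bar V'$ entrywise to units of $A$ and pairing with $(A^2,\langle 1,-1\rangle)$ produces an isotropic nonsingular module in $\Quad A$ of the same rank, giving $u(\k)\leq u(A)$. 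For \eqref{res-m-ineq}, I would diagonalize a nonsingular $\bar V\subseteq\E^n_\k$ of dimension $m\geq m_A$ as $\langle \bar d_1,\ldots ,\bar d_m\rangle$ with each $\bar d_i$ a sum of squares in $\k$, and lift each sum-of-squares \emph{expression} to $A$. This produces units $d_i\in A^\times$ that are sums of squares in $A$, so by the characterization recalled after the definition of $m_A$, the form $\langle d_1,\ldots ,d_m\rangle$ embeds in some $\E^{n'}_A$; it therefore contains a unit vector by definition of $m_A$, whose reduction is a unit vector in $\bar V$.

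For the henselian reverse inequalities I would argue as follows. For $s(A)\leq s(\k)$: given $-1=\sum_{j=1}^s\bar x_j^2$ in $\k$ with (WLOG) $\bar x_1\neq 0$, lift $\bar x_2,\ldots ,\bar x_s$ arbitrarily to $A$ and apply Hensel's lemma to $T^2+1+\sum_{j\geq 2}x_j^2\in A[T]$ at the simple root $\bar x_1$ (simple since $2\bar x_1\neq 0$). For $u(A)\leq u(\k)$: this actually holds unconditionally, since a primitive isotropic vector $v\in V$ satisfies $\bar v\neq 0$ in $\bar V$ and $\bar q(\bar v)=0$, so $\bar V$ is isotropic. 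For $m_A\leq m_\k$: given nonsingular $V\subseteq\E^N_A$ with $\dim_\k\bar V\geq m_\k$, the definition of $m_\k$ furnishes a unit vector $\bar v\in\bar V$; lift to $v_0\in V$, observe $q(v_0)=1+\epsilon$ for some $\epsilon\in\mathfrak m$, and apply Hensel to $T^2-(1+\epsilon)$ at the simple root $1$ to extract $\lambda\in A^\times$ with $\lambda^2=1+\epsilon$; then $\lambda^{-1}v_0\in V$ is a unit vector.

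The step I expect to be the main obstacle is the forward direction \eqref{res-m-ineq}, where it is essential to lift not merely the diagonal entries $\bar d_i$ to units of $A$ but rather their entire sum-of-squares expressions, in order to guarantee that the resulting form is still embeddable as a quadratic submodule of Euclidean space over $A$, so that the defining property of $m_A$ is available. The remaining steps reduce to direct applications of Hensel's lemma together with the diagonalization and cancellation theorems.
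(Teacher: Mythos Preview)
Your arguments for $P$, $s$, and $m$ are correct and close to the paper's, with one pleasant simplification: for $m_\k\leq m_A$ the paper invokes a lemma lifting $\bar V$ as a submodule of the \emph{same} $\E^n_A$, whereas you only build an abstract model $\langle d_1,\dots ,d_m\rangle$ over $A$ with the right reduction, which suffices and avoids that lemma. Your henselian arguments for $s$ and $m$ via Hensel's lemma on $T^2-c$ are also fine; the paper packages them into a single ``isotropy lifts'' lemma (if $(\bar V,\bar q)$ is isotropic or represents $\bar a$, then so does $(V,q)$), but the content is the same.

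There is, however, a genuine gap in your treatment of the $u$-invariant. Your claim that $u(A)\leq u(\k)$ holds \emph{unconditionally} is false: the paper's remark immediately following this proposition gives $A=\Z_{(p)}$ with $u(\Z_{(p)})=\infty$ and $u(\F_p)=2$. The implication you use---a primitive isotropic $v\in V$ reduces to a nonzero isotropic $\bar v\in\bar V$---says that isotropy over $A$ forces isotropy over $\k$; by contraposition this shows that an \emph{anisotropic} form over $\k$ lifts (diagonally) to an anisotropic form over $A$, which is precisely the forward inequality $u(\k)\leq u(A)$, not the reverse. For $u(A)\leq u(\k)$ one needs the opposite implication, that isotropy of $(\bar V,\bar q)$ forces isotropy of $(V,q)$, and this genuinely requires the henselian hypothesis (take a hyperbolic pair in $\bar V$, lift, and solve $aX^2+bX+c=0$ by Hensel). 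Your ``forward'' argument for $u(\k)\leq u(A)$---lifting an isotropic form via a hyperbolic splitting---does not prove that inequality either under the intended definition $u(A)=\sup\{\dim\bar V : (V,q)\text{ anisotropic}\}$; you seem to have read the paper's definition as written (with ``isotropic''), but that version is degenerate. The paper instead proves $u(\k)\leq u(A)$ via the universality characterization: lift a diagonalization of $\mathbf V$ to $A$, observe the lift has rank $\geq u(A)$ hence is universal, and reduce.
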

The proof will use two lemmata.
The first is essentially \cite[Lem V.1.4]{baeza78}:
\begin{lem}\label{lem:hensel-isotropy}
  Let $A$ be a henselian local ring and
  let $(V,q)$ be a non-singular quadratic module over $A$.
  \begin{enumerate}[label=(\roman*)]
  \item If the reduction $(\redu V ,\redu q )$ is isotropic,
    then $(V,q)$ is isotropic.
  \item Let $a\in A^\times$. If $(\redu V,\redu q)$ represents
    $\overline a$, then $(V,q)$ represents $a$.
  \end{enumerate}
\end{lem}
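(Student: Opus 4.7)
The plan is to establish (i) directly using Hensel's lemma after passing to a diagonal form for $q$, and then to reduce (ii) to (i) via the Representation Theorem of Section~\ref{sec:homog-quad-R}. I work throughout under the implicit hypothesis $2 \in A^\times$, consistent with the rest of the paper; this both enables the Diagonalization Theorem and ensures that the derivative computation below produces a unit.

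For (i), I would first apply the Diagonalization Theorem to write $(V,q) \cong (A^n, \langle a_1, \dots, a_n\rangle)$ with $a_1, \dots, a_n \in A^\times$ (noting that $V$ is free since it is finitely generated projective over a local ring). An isotropic vector of the reduction is then a primitive tuple $(\overline{x}_1, \dots, \overline{x}_n) \in \overline{k}^n$ satisfying $\sum_i \overline{a}_i \overline{x}_i^2 = 0$, and by primitivity I may reorder so that $\overline{x}_1 \in \overline{k}^\times$. Choosing arbitrary lifts $x_2, \dots, x_n \in A$ of the remaining coordinates, I would then apply Hensel's lemma to the univariate polynomial
$$
f(X) = a_1 X^2 + \sum_{i=2}^n a_i x_i^2 \in A[X],
$$
which satisfies $\overline{f}(\overline{x}_1) = 0$ and $\overline{f}'(\overline{x}_1) = 2\,\overline{a}_1\,\overline{x}_1 \in \overline{k}^\times$. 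The resulting root $x_1 \in A$ lifts $\overline{x}_1$, so in particular $x_1 \in A^\times$, and the tuple $(x_1, \dots, x_n)$ is the desired primitive isotropic vector of $(V,q)$.

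For (ii), the argument is purely formal: given that $(\overline{V}, \overline{q})$ represents $\overline{a}$, the Representation Theorem over the field $\overline{k}$ gives that $\overline{q} \orthosum \langle -\overline{a}\rangle$ is isotropic. Applying part (i) to the non-singular $A$-module $q \orthosum \langle -a\rangle$ (whose reduction is exactly $\overline{q} \orthosum \langle -\overline{a}\rangle$, and which is non-singular since $-a \in A^\times$) lifts this to isotropy over $A$, and a second use of the Representation Theorem, now over the (semi-)local ring $A$, yields that $q$ represents $a$.

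I do not anticipate any serious obstacle. The one point requiring attention is that reduction modulo $\mathfrak{m}$ behaves well with respect to primitivity: the primitivity of the isotropic vector we construct over $A$ follows from $x_1$ being a unit, which in turn is a consequence of $\overline{x}_1 \neq 0$ together with the Hensel lift. Once diagonal coordinates have been arranged, the problem collapses to a one-variable Hensel root extraction, and the key qualitative input is precisely the hypothesis that $A$ is henselian.
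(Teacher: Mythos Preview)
Your argument is correct, and part~(ii) is handled exactly as in the paper. For part~(i), however, the paper takes a slightly different route: rather than diagonalizing, it uses that an isotropic non-singular space over $\k$ contains a hyperbolic pair $(\bar u,\bar v)$, lifts these to $u,v\in V$, and applies Hensel to the polynomial $f(X)=q(Xu+v)=q(u)X^2+B_q(u,v)X+q(v)$, whose reduction is simply $\bar f(X)=X$. This yields a root $\lambda\in\mathfrak m$ and the primitive isotropic vector $\lambda u+v$. The advantage of the paper's approach is that the derivative $\bar f'(0)=1$ is automatically a unit, so neither the Diagonalization Theorem nor the hypothesis $2\in A^\times$ is needed for part~(i); your approach, by contrast, uses both (the derivative $2\bar a_1\bar x_1$ would vanish in characteristic~$2$). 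Since the rest of the paper works under $2\in A^\times$ anyway, your version is perfectly adequate in context and arguably more direct once diagonal coordinates are in hand.
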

\begin{proof}
  (i) Since $\redu V$ is isotropic and non-singular, we may pick
  a hyperbolic pair $\redu u,\redu v\in\redu V$ with representatives
  $u,v\in V$. Put $a = q(u)$, $b=B_q(u,v)$ and $c = q(v)$ and
  consider the polynomial $f(X) = aX^2 + bX + c$.
  After reduction modulo $\mathfrak m$, we have $\bar f (X) = X$,
  so $\bar f(0) = 0$ and $\bar f '(X) = 1\neq 0$.
  Using that $A$ is henselian, we find a root $\lambda\in\mathfrak m$
  for the unreduced polynomial.
  But then
  $$
  q(\lambda u + v) = \lambda^2 q( u) + q(v) + \lambda B_q(u,v)
  = f(\lambda ) = 0.
  $$
  (ii) Follows from (i) and the representation theorem.
\end{proof}
\begin{lem}\label{lem:lift-nonsing-subsp}
  Let $A$ be a local ring with residue field $\k$.
  Let $(V,q)$ be a quadratic module over $A$
  and let $\mathbf U\subseteq\redu V$ be a
  non-singular subspace of the reduction.
  Then there is a quadratic subspace $U\subseteq V$ whose reduction
  $\redu U$ is isometric to $\mathbf U$.
\end{lem}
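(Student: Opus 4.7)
The plan is to pick a basis of $\mathbf U$, lift it element-wise to $V$, and show that the submodule spanned by the lifts does the job. Since $A$ is local and $V$ is finitely generated projective, $V$ is free; let $n = \operatorname{rk} V$. Choose any basis $\bar u_1,\dots ,\bar u_m$ of the $\k$-subspace $\mathbf U\subseteq\redu V$, extend it to a basis $\bar u_1,\dots ,\bar u_m,\bar u_{m+1},\dots ,\bar u_n$ of $\redu V$, and lift each $\bar u_i$ to some $u_i\in V$.

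By Nakayama's lemma, the tuple $(u_1,\dots ,u_n)$ is then a basis of $V$, so in particular $U:=Au_1+\dots+Au_m$ is a free submodule of rank $m$ which is a direct summand of $V$. The natural map $\redu U\to\redu V$ is therefore injective, and by construction its image is precisely $\mathbf U$; moreover the reduction of $q\vert_U$ agrees with the restriction of $\redu q$ to $\mathbf U$, since both are determined by their values on the basis $\bar u_1,\dots ,\bar u_m$. Consequently $\redu U$ is isometric to $\mathbf U$ as a quadratic space over $\k$.

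It remains to verify that $q\vert_U$ is non-singular as a quadratic form on $U$, so that $U$ qualifies as a quadratic subspace of $V$ (in the sense of Section~\ref{subsec:quad-basic}). This is where Proposition~\ref{prop:singular-reduc} does all the work: since $A$ is local, its only maximal ideal is $\mathfrak m$, and the reduction $(\redu U,\redu{q\vert_U})\cong(\mathbf U,\redu q\vert_{\mathbf U})$ is non-singular by hypothesis on $\mathbf U$. Hence $(U,q\vert_U)$ is non-singular over $A$, and combined with the fact that $U$ is a direct summand of $V$ as a module, we conclude that $U$ is a quadratic subspace of $V$ with reduction isometric to $\mathbf U$, as required.

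There is no real obstacle here: the content is entirely the combination of Nakayama-type lifting (to guarantee $U$ is a direct summand of the correct rank) with the descent/ascent criterion for non-singularity given by Proposition~\ref{prop:singular-reduc}. The only point one needs to be slightly careful about is to lift a \emph{basis} of $\mathbf U$ all the way to a basis of $V$, rather than merely lifting elements of $\mathbf U$ individually, in order to conclude that $U$ is a direct summand and not just a submodule.
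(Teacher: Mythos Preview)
Your proof is correct. Both arguments lift a basis of $\mathbf U$ to $V$, but they diverge in how they show the resulting $U$ is a direct summand. The paper lifts only an (orthogonal) basis of $\mathbf U$, proves the lifts are $A$-linearly independent by observing that the Gram matrix $(B_q(u_i,u_j))_{i,j}$ has unit determinant (its reduction is the Gram matrix of the non-singular $\mathbf U$), and then appeals to Proposition~\ref{prop:singular-reduc} for non-singularity of $U$ followed by Proposition~\ref{prop:non-sing-summand} to conclude that $U$ splits off as a direct summand. You instead extend to a basis of all of $\redu V$, lift, and invoke Nakayama to obtain a basis of $V$, so that $U$ is visibly a direct summand before any quadratic-form theory enters; non-singularity is then a separate consequence of Proposition~\ref{prop:singular-reduc}. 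Your route is slightly cleaner in that it decouples the module-theoretic step from the quadratic one and avoids Proposition~\ref{prop:non-sing-summand} altogether, at the trivial cost of extending the basis. One small terminological point: in the paper a ``quadratic submodule'' is by definition just a direct summand equipped with the restricted form, with no non-singularity requirement, so once you have established that $U$ is a direct summand with $\redu U\cong\mathbf U$ you are already done; the non-singularity check is true and useful but not needed for the statement as phrased.
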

\begin{proof}
  Choose an orthogonal basis $\bar u_1,\dots ,\bar u_n$ of
  $\mathbf U\subseteq\redu V = V/\mathfrak m V$
  and pick representatives $u_1,\dots ,u_n\in V$.
  Let $U = Au_1 + \dots + Au_n\subseteq V$.
  We claim that $U$ is a free $A$-module and that $u_1,\dots ,u_n$
  is a basis.
  Indeed, suppose $\sum_{i=1}^na_iu_i=0$ with $a_1,\dots ,a_n\in A$.
  Applying $B_q({-},u_j)$ for $j=1,\dots ,n$, we get a
  system of $n$ equations
  $$
  \sum_{i=1}^na_iB_q(u_i,u_j)=0,\qquad j =1,\dots ,n.
  $$
  Note that
  $\redu{\det (B_q(u_i,u_j))_{i,j}} = \det (B_{\redu q}(\bar u_i,\bar u_j))
  \neq 0$
  by non-singularity of $\mathbf U$,
  so $\det (B_q(u_i,u_j))_{i,j}\in A^\times$
  and hence we must have $a_i = 0$ for each $i$
  by~\cite[Prop XIII.4.16]{lang}.
  Thus $U$ is free
  and the isometry $\redu U\cong \mathbf U$ is obvious.
  It follows from Propositions~\ref{prop:singular-reduc}
  and~\ref{prop:non-sing-summand} that $U$ is a submodule of $V$.
\end{proof}
\begin{proof}[Proof of Proposition~\ref{prop:arithm-invars-ineqs}]
  Inequalities~\eqref{res-p-ineq} and~\eqref{res-s-ineq} follow from the
  easy observation that if $f\colon R\to S$ is a surjection of rings,
  then $P(S)\leq P(R)$ and $s(S)\leq s(R)$.
  To see~\eqref{res-m-ineq}, suppose $m\geq m_A$.
  If $\mathbf V$ is an $m$-dimensional non-singular subspace of $\E^n_{\k}$ for some $n$,
  then Lemma~\ref{lem:lift-nonsing-subsp} says there is a non-singular
  subspace $V\subseteq\E^n_A$
  with $\redu V\cong \mathbf V$. But then by assumption $V$ contains a unit vector,
  and hence so does $\mathbf V$.
  To see~\eqref{res-u-ineq},
  let $(\mathbf V,q)$ be an arbitrary $d$-dimensional
  non-singular quadratic module over $\k$ with $d\geq u(A)$.
  Pick an orthogonal basis $\mathbf V\cong\langle \bar a_1,\dots ,\bar a_d\rangle$.
  Then $V = \langle a_1,\dots ,a_d\rangle$ is a $d$-dimensional non-singular
  quadratic module over $A$, hence is universal.
  It follows that $\mathbf V\cong\redu V$ is universal.
  
  Suppose now that $A$ is henselian.
  If $V$ is a non-singular quadratic subspace of $\E^n_A$
  for some $n$ having $\dim_{\k}\redu V\geq m_{\k}$,
  then the reduction $\redu V$ is a non-singular quadratic subspace of
  $\E^n_{\k}$. Since $\dim_{\k}\redu V \geq m_{\k}$,
  we have that $\redu V$ contains a unit vector, and
  Lemma~\ref{lem:hensel-isotropy} implies that $V$ contains a unit vector.
  Similar applications of Lemma~\ref{lem:hensel-isotropy}
  show that equalities hold in~\eqref{res-s-ineq} and~\eqref{res-u-ineq}.
\end{proof}
\begin{rmk}
  \begin{enumerate*}[mode=unboxed,itemjoin = \hspace{2cm plus 2cm},label=(\roman*)]
  \item It is far from true that equality holds in~\eqref{res-p-ineq}
    if $A$ is henselian.
    Indeed, one very quickly runs into $P(A) = \infty$
    if finiteness of $P(A)$ is not forced by finiteness of the Stufe.
    Recall that a field is called \emph{formally real} if its Stufe
    is infinite.
    Choi, Dai, Lam and Reznick~\cite{lam82} have shown that if $R$
    is a regular local ring with $\dim_{\text{Krull}}(R) \geq 3$
    whose residue field is formally real, then $P(R) = \infty$.
    In particular, if $\k$ is formally real
    then the complete local ring $A = \k \llbracket X_1,\dots ,X_d\rrbracket$
    has $P(A) = \infty$ for all $d\geq 3$. \\
  \item Some condition on the local ring $A$ is necessary for equality to hold
    in~\eqref{res-m-ineq},~\eqref{res-s-ineq}, and~\eqref{res-u-ineq}.
    Let $p$ be an odd prime.
    The ring $\Z_{(p)}$ has $s(\Z_{(p)}) = u(\Z_{(p)}) = \infty$, but
    ${\Z_{(p)}/ p\Z_{(p)}}\cong\F_p$ has $s(\F_p )\leq 2$ and $u(\F_p ) = 2$,
    which shows that a condition is necessary for equality to
    hold in~\eqref{res-s-ineq} and~\eqref{res-u-ineq}.
    Furthermore, $m_{\F_p} = 2$ whereas $m_{\Z_{(p)}} = 4$,
    showing that we cannot in general expect equality to hold in~\eqref{res-m-ineq}.
    To see that $m_{\Z_{(p)}}\geq 4$,
    pick $a,b\in\N$ such that $p\nmid 4^a(8b-1)$;
    for instance, one could take $a=0$ and $b=p$.
    Then $3\left\langle \frac{1}{4^a(8b-1)}\right\rangle$
    is a non-singular subspace of $\E^{12}_{\Z_{(p)}}$ since
    $\frac{1}{4^a(8b-1)}$ is a sum of four squares in $\Z_{(p)}$
    by Lagrange's four-square theorem.
    Indeed, the four-square theorem says there is
    $r_1,r_2,r_3,r_4\in\Z$ with $r_1^2+r_2^2+r_3^2+r_4^2 = 4^a(8b-1)$.
    But then
    $$
    \left(
      \frac{r_1}{4^a(8b-1)}
    \right)^2
    +
    \left(
      \frac{r_2}{4^a(8b-1)}
    \right)^2
    +
    \left(
      \frac{r_3}{4^a(8b-1)}
    \right)^2
    +
    \left(
      \frac{r_4}{4^a(8b-1)}
    \right)^2
    = \frac 1{4^a(8b-1)},
    $$
    and $\frac{r_j}{4^a(8b-1)}\in\Z_{(p)}$ for each $j$.
    However, $3\left\langle \frac{1}{4^a(8b-1)}\right\rangle$
    does not contain a unit vector.
    To see this, assume for contradiction that
    $(x,y,z)\in\Z_{(p)}^3\subseteq\Q^3$
    is a unit vector, i.e. that
    $$
    \frac{x^2+y^2+z^2}{4^a(8b-1)} = 1.
    $$
    Clearing the denominator, we find
    $$
    x^2 + y^2 + z^2 = 4^a(8b-1).
    $$
    But (the trivial direction of) Legendre's three-square theorem
    together with the Davenport--Cassels
    theorem say that $4^a(8b-1)$ cannot be expressed as a sum of three
    squares of rational numbers, contradiction.
    In fact it follows from Proposition~\ref{prop:arithm-invars-of-DVR}
    below that $m_{\Z_{(p)}}\leq m_\Q = 4$,
    and hence $m_{\Z_{(p)}} = 4$ as claimed.
  \end{enumerate*}
\end{rmk}
\subsection{Invariants of valuation rings}
Finally, we consider how the
arithmetic invariants of a valuation ring are also
related to those of its quotient field.
\begin{lem}\label{lem:DVR-isotropy}
  Let $A$ be a valuation ring.
  and let $(V,q)$ be a non-singular quadratic module over $A$.
  Let $K$ be the quotient field of $A$
  and denote by $(V_K,q_K)$ the quadratic space over $K$ with
  $V_K = K\otimes_A V$ and $q_K(\alpha\otimes x) = \alpha^2q(x)$ for
  $\alpha\in K$, $x\in V$.
  Then
  \begin{enumerate}[label=(\roman*)]
  \item $(V,q)$ is isotropic if and only if $(V_K,q_K)$ is.
  \item Then $(V,q)$ represents $a\in A^\times$
    if and only if  $(V_K,q_K)$ does.
  \end{enumerate}
\end{lem}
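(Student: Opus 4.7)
The plan is to prove (i) directly and then deduce (ii) as an immediate consequence via the representation theorem. For (i), I would handle the easy direction first and then tackle the converse, where the main work is extracting a \emph{primitive} witness of isotropy over $A$ from an arbitrary isotropic vector in $V_K$.

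For the forward implication in (i), note that since $A$ is a valuation ring, it is in particular an integral domain, and since $V$ is projective it is flat over $A$. Hence the canonical map $V \to V_K$, $v \mapsto 1\otimes v$, is injective, so any primitive isotropic $v \in V$ yields a nonzero element of $V_K$ with $q_K$-value zero. For the converse, I use that a valuation ring is local, whence $V$ is free; fix a basis $e_1,\dots ,e_n$. Given $0\neq y \in V_K$ with $q_K(y)=0$, clear denominators using a common $s\in A\setminus\lbrace 0\rbrace$ to produce $x := sy = \sum_i a_i e_i \in V$, which is nonzero and satisfies $q(x) = s^2 q_K(y)=0$. To upgrade $x$ to a primitive vector, I exploit the defining property of valuation rings used elsewhere in the paper, namely that every finitely generated ideal is principal: writing $(a_1,\dots ,a_n) = (c)$ and $a_i = c b_i$ forces $(b_1,\dots ,b_n) = A$, so $x = c x'$ with $x' := \sum_i b_i e_i \in V$ primitive. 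Then $0 = q(x) = c^2 q(x')$, and since $A$ is a domain with $c\neq 0$, we conclude $q(x')=0$, giving the required primitive isotropic vector.

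For (ii), I would invoke the representation theorem twice, applied to the semi-local rings $A$ and $K$: $(V,q)$ represents $a\in A^\times$ iff $q\orthosum\langle -a\rangle$ is isotropic over $A$, and similarly $(V_K,q_K)$ represents $a$ iff $q_K\orthosum\langle -a\rangle$ is isotropic over $K$. Since extension of scalars commutes with orthogonal sum, we have $(q\orthosum\langle -a\rangle)_K\cong q_K\orthosum\langle -a\rangle$, so applying (i) to this module closes the argument. The only genuinely delicate point—and thus the main obstacle—is the primitivization step in the converse of (i): this is precisely where the valuation ring hypothesis is essential, since a general local domain need not permit one to extract a principal common factor of the coordinates of $x$.
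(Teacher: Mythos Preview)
Your proof is correct and follows essentially the same route as the paper: reduce to $V=A^n$, scale an isotropic vector of $V_K$ to a primitive isotropic vector of $V$, and deduce (ii) from (i) via the representation theorem. The only cosmetic difference is that the paper performs the primitivization in a single step by dividing through by the coordinate of minimal valuation, whereas you first clear denominators and then extract a principal generator of the coordinate ideal; both exploit exactly the same feature of valuation rings.
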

\begin{proof}
  Suppose $(V_K,q_K)$ is isotropic.
  We may assume that $V=A^n$.
  Let $0\neq x\in V$ with $q_K(x) = 0$,
  and let $\nu\colon K\to\Gamma\cup\lbrace \infty\rbrace$
  be a valuation for $A$.
  Write $x = (x_1,\dots ,x_n)$ and pick $x_i$ with
  $\nu (x_i)\leq \nu (x_j)$ for all $j$.
  Then $x_i^{-1}x\in A^n$ since $\nu (x_i^{-1}x_j) = \nu (x_j) - \nu (x_i)\geq 0$
  for each $j$. Also, $x_i^{-1}x$ is primitive since its
  $i$th coordinate is $x_i^{-1}x_i = 1$. But
  $$
  q(x_i^{-1}x) = q_K(x_i^{-1}x) = x_i^{-2}q_K(x) = 0.
  $$
  This proves the nontrivial direction of (i). Further, (ii)
  follows from (i) via the representation theorem.
\end{proof}
\begin{prop}\label{prop:arithm-invars-of-DVR}
  Let $A$ be a valuation ring with $2\in A^\times$,
  and denote the quotient field of $A$ by $K$.
  Then
    \begin{align}
    m_A &\leq m_K,\label{m-ineq} \\
    s(A )&= s(K),\label{s-ineq}\\
    \intertext{and}
    u(A )&\leq u(K)\label{u-ineq}
  \end{align}
\end{prop}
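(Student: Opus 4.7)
The plan is to reduce all three statements to the content of Lemma~\ref{lem:DVR-isotropy}, which says precisely that isotropy and representation of units by non-singular forms over $A$ are detected after base change to the quotient field $K$. A useful preliminary observation is that since $A$ is local, every finitely-generated projective $A$-module is free, so for any quadratic module $(V,q)$ over $A$ one has $\operatorname{rk}_A V = \dim_{\k} \redu V = \dim_K V_K$; moreover, base change along $A \hookrightarrow K$ takes non-singular quadratic $A$-modules to non-singular quadratic $K$-spaces, and preserves orthogonal decompositions.

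For~\eqref{m-ineq}, I would take $V$ to be a non-singular quadratic submodule of $\E^n_A$ with $\dim_\k \redu V \geq m_K$. Tensoring the splitting $\E^n_A \cong V \orthosum V^\bot$ with $K$ exhibits $V_K$ as a non-singular subspace of $\E^n_K$ of dimension $\geq m_K$, so by definition of $m_K$ the space $V_K$ contains a unit vector; equivalently $q_K$ represents $1 \in A^\times$. Lemma~\ref{lem:DVR-isotropy}(ii) then gives a unit vector in $V$, whence $m_A \leq m_K$. The argument for~\eqref{u-ineq} is completely analogous: if $(V,q)$ is a non-singular isotropic quadratic module over $A$, Lemma~\ref{lem:DVR-isotropy}(i) implies $(V_K, q_K)$ is isotropic over $K$, and since $\dim_K V_K = \dim_\k \redu V$, the definition of $u(K)$ gives $\dim_\k \redu V \leq u(K)$; taking the supremum over all isotropic $(V,q) \in \Quad A$ yields $u(A) \leq u(K)$.

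For~\eqref{s-ineq}, the inequality $s(K) \leq s(A)$ is immediate from the inclusion $A \hookrightarrow K$: any expression of $-1$ as a sum of $s$ squares in $A$ lives inside $K$. Conversely, assume $s(K) = s < \infty$ and note that $-1 \in A^\times$. Since $s\langle 1\rangle$ represents $-1$ over $K$, Lemma~\ref{lem:DVR-isotropy}(ii) applied to the non-singular module $(A^s, s\langle 1\rangle)$ yields an expression of $-1$ as a sum of $s$ squares in $A$, so $s(A) \leq s(K)$.

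There is no substantive obstacle here; the work was done in establishing Lemma~\ref{lem:DVR-isotropy}, and the only point deserving attention is the matching of dimensions via freeness of projective modules over the local ring $A$, so that the hypotheses on $\dim_\k \redu V$ in the definitions of $m_A$ and $u(A)$ transport cleanly to the corresponding dimension hypotheses over $K$.
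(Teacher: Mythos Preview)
The paper does not actually supply a proof of this proposition; it is stated at the very end of the appendix, immediately before the bibliography, with the evident intent that the reader read it off from Lemma~\ref{lem:DVR-isotropy}. Your argument does exactly this, correctly and cleanly, so there is nothing substantive to compare against.

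One small remark on~\eqref{u-ineq}: the paper's displayed definition of $u(A)$ almost certainly contains a typo (``isotropic'' should read ``anisotropic'', as the alternative universal-form characterization that follows confirms). With the intended definition, the direction of Lemma~\ref{lem:DVR-isotropy}(i) one needs is the contrapositive of the nontrivial one---$(V,q)$ anisotropic over $A$ forces $(V_K,q_K)$ anisotropic over $K$---rather than the easy forward direction you quoted. Since the lemma is stated as an ``if and only if'' this changes nothing in substance; just be aware that you were tracking the paper's typo rather than the standard $u$-invariant.
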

\bibliographystyle{amsalpha}
\bibliography{sources}
\end{document}